\documentclass[11pt]{article}

\title{Elementary spectral invariants and three-dimensional Reeb dynamics}
\author{Michael Hutchings\footnote{Partially supported by NSF grant DMS-2404790.}}
\date{}

\usepackage{amssymb}
\usepackage{latexsym}
\usepackage{amsmath}
\usepackage{amsthm}
\usepackage{amscd}
\usepackage{color}

\numberwithin{equation}{section}

\newtheorem{theorem}{Theorem}[section]

\newtheorem{proposition}[theorem]{Proposition}
\newtheorem{corollary}[theorem]{Corollary}
\newtheorem{lemma}[theorem]{Lemma}
\newtheorem{lemma-definition}[theorem]{Lemma-Definition}

\theoremstyle{definition}
\newtheorem{definition}[theorem]{Definition}

\newtheorem{remark}[theorem]{Remark}

\newtheorem{example}[theorem]{Example}

\newcommand{\floor}[1]{\left\lfloor #1 \right\rfloor}
\newcommand{\ceil}[1]{\left\lceil #1 \right\rceil}

\newcommand{\eqdef}{\;{:=}\;}

\newcommand{\C}{{\mathbb C}}

\newcommand{\R}{{\mathbb R}}

\newcommand{\Z}{{\mathbb Z}}

\newcommand{\op}{\operatorname}

\newcommand{\bpm}{\begin{pmatrix}}
\newcommand{\epm}{\end{pmatrix}}

\renewcommand{\epsilon}{\varepsilon}

\begin{document}

\maketitle

\begin{abstract}
We survey various recent results on the existence and properties of periodic orbits of Reeb vector fields in three dimensions. We give an introduction to the ``elementary spectral invariants'' of contact three-manifolds, and we explain how they can be used to prove some of these results. (The remaining results can be proved using spectral invariants from embedded contact homology, of which the elementary spectral invariants are a simplification.) We then review the ``alternative ECH capacities'' of symplectic four-manifolds, and explain how these can be modified to define the elementary spectral invariants. This document is a set of lecture notes for a minicourse taught by the author at the CIME school on Symplectic Dynamics and Topology in Cetraro in June 2025.
\end{abstract}

\setcounter{tocdepth}{2}

\tableofcontents

\section{Reeb dynamics in three dimensions}
\label{sec:intro}

We begin by recalling some recent results in three-dimensional Reeb dynamics that have been proved using ECH spectral invariants and/or the elementary spectral invariants of contact three-manifolds.

\subsection{Reeb vector fields}
\label{sec:Reebvf}

Let $Y$ be a $(2n-1)$-dimensional smooth manifold. A {\bf contact form\/} on $Y$ is a $1$-form $\lambda$ on $Y$ such that $\lambda\wedge (d\lambda)^{n-1}\neq 0$ everywhere. If $\lambda$ is a contact form, then there is a unique vector field $R$ on $Y$, called the {\bf Reeb vector field\/}, such that $d\lambda(R,\cdot)=0$ and $\lambda(R)=1$. A contact form $\lambda$ also determines a rank $2n-2$ subbundle $\xi=\op{Ker}(\lambda)$ of $TY$, which is a totally nonintegrable distribution. The restriction of $d\lambda$ to each fiber of $\xi$ is a linear symplectic form. A rank $2n-2$ subbundle of $TY$ that can be obtained as the kernel of a contact form is called\footnote{Note that a contact form gives an orientation of $Y$ by the $2n-1$ form $\lambda\wedge(d\lambda)^{n-1}$. A contact structure given by the kernel of a globally defined contact form is sometimes called a {\bf co-oriented\/} contact structure. Sometimes a ``contact structure'' is defined to be a rank $2n-2$ subbundle which is only {\em locally\/} the kernel of a contact form.} a {\bf contact structure\/} on $Y$.

There are two basic examples of contact forms. To introduce the first example, consider $\R^{2n}$ with coordinates $x_1,\ldots,x_n,y_1,\ldots,y_n$. The standard Liouville form on $\R^{2n}$ is a $1$-form defined by
\begin{equation}
\label{eqn:standardLiouvilleform}
\lambda_0=\frac{1}{2}\sum_{i=1}^n\left(x_i\,dy_i-y_i\,dx_i\right).
\end{equation}
Note that $d\lambda_0=\omega_0$, where $\omega_0$ is the standard symplectic form
\[
\omega_0=\sum_{i=1}^ndx_i\wedge dy_i.
\]
Now let $Y$ be a hypersurface in $\R^{2n}$. Suppose that $Y$ is transverse to the radial vector field
\[
\rho = \sum_{i=1}^n\left(x_i\frac{\partial}{\partial x_i} + y_i\frac{\partial}{\partial y_i}\right).
\]
(If $Y$ is also compact and connected, then by a slight abuse of terminology\footnote{If $Y$ is a star-shaped hypersurface in this sense, then the $2n$-dimensional compact region $X\subset\R^{2n}$ bounded by $Y$ is star-shaped in the classical sense, namely the intersection of $X$ with any line through the origin is an interval. However the converse is not true: If $X$ is a $2n$-dimensional region which is star-shaped in the classical sense and has smooth boundary, the boundary might not be transverse to the radial vector field.}, symplectic geometers often refer to $Y$ as a {\bf star-shaped hypersurface\/}.)
Then the restriction $\lambda=\lambda_0\big|_Y$ is a contact form on $Y$. Moreover, the associated Reeb vector field $R$ is related to Hamiltonian dynamics as follows. Let $H:\R^{2n}\to \R$ be a smooth function for which $Y$ is a regular level set, and let $X_H$ denote the associated Hamiltonian vector field on $\R^{2n}$ defined by
\[
\omega_0(X_H,\cdot) = dH.
\]
Then $X_H$ is tangent to $Y$, and there is a nonvanishing smooth function $f:Y\to\R$ such that on $Y$ we have
\begin{equation}
\label{eqn:ReebHamiltonian}
R=fX_H.
\end{equation}
Later, whenever we refer to a star-shaped hypersurface $Y$, it is always understood that we equip it with the contact form given by the restriction of $\lambda_0$.

For the second example, let $N$ be an $n$-dimensional smooth manifold with a Riemannian\footnote{This example also generalizes to Finsler metrics.} metric $g$. Let $Y$ be the unit cotangent bundle of $N$, namely
\[
Y =\left\{(q,p)\;\big|\; q\in N,\; p\in T_q^*N,\; \|p\|_g=1\right\} \subset T^*N.
\]
There is a canonical $1$-form $\lambda$ on $T^*N$: If $q\in N$, $p\in T_q^*N$, and $V\in T_{(q,p)}T^*N$, then $\lambda(V)=p(\pi_*V)$, where $\pi:T^*N\to N$ is the projection. If $q_1,\ldots,q_n$ are local coordinates in an open set $U\subset N$, and $q_1,\ldots,q_n,p_1,\ldots,p_n$ are the resulting local coordinates on $\pi^{-1}(U)\subset T^*N$, then in these coordinates we have
\[
\lambda=\sum_{i=1}^np_i\,dq_i.
\]
Now the restriction $\lambda|_Y$ is a contact form on $Y$. (The same is true more generally if $Y$ is transverse to each fiber of $T^*N\to N$ and the intersection of $Y$ with each fiber is a star-shaped hypersurface.) Moreover, under the diffeomorphism $T^*N\simeq TN$ induced by the metric $g$, the associated Reeb vector field $R$ corresponds to the geodesic flow on the unit tangent bundle of $N$. Thus trajectories of the Reeb vector field are equivalent to unit speed geodesics on $N$.

The above examples provide some motivation for studying the dynamics of Reeb vector fields, but there are many more examples. A theorem of Thurston-Winkelnkemper \cite{thurwink} asserts that every closed orientable three-manifold admits a contact structure, and today there are many more ways to construct contact structures in three dimensions. A more recent result of Borman-Eliashberg-Murphy \cite{bem} implies that any odd-dimensional manifold admits a contact structure, as long as it admits an ``almost contact structure'', which is determined by algebraic topology.

It is also worth noting that if $\lambda$ is a contact form on a $(2n-1)$-dimensional manifold $Y$, and if $f:Y\to\R$ is a smooth function, then $e^f\lambda$ is also a contact form on $Y$ with the same associated contact structure. If $f$ is a constant function, then the Reeb vector field associated to $e^f\lambda$ is a scaling of the Reeb vector field associated to $\lambda$. If $f$ is not constant, then the Reeb vector fields of $\lambda$ and $e^f\lambda$ are not parallel and can have very different dynamics.

\subsection{The Weinstein conjecture in three dimensions}

Let $\lambda$ be a contact form on $Y$. A basic aspect of the dynamics of the associated Reeb vector field $R$ is given by its periodic orbits, which we call ``Reeb orbits'' for short. To be precise, a {\bf Reeb orbit\/} is a map $\gamma:\R/T\Z\to Y$, for some $T>0$, such that $\gamma'(t)=R(\gamma(t))$ for all $t$. We declare two such maps to be equivalent if they differ by precomposition with a translation of the domain $\R/T\Z$. The Reeb orbit $\gamma$ is {\bf simple\/} if the map $\gamma$ is injective. A simple Reeb orbit is equivalent to a compact connected one-dimensional submanifold of $Y$ which is everywhere tangent to the Reeb vector field\footnote{In some literature, all Reeb orbits are understood to be simple. We need to be pedantic and consider non-simple Reeb orbits because of the way that Reeb orbits arise in the spectral invariants that we will introduce later.}. For example, when $Y$ is the unit cotangent bundle of a smooth manifold $N$ with a Riemannian metric, (simple) Reeb orbits correspond to (prime) oriented closed geodesics\footnote{In the context of closed geodesics, the adjective ``simple'' generally refers to geodesics that do not have self-intersections in $N$.} in $N$.

For any Reeb orbit $\gamma:\R/T\Z\to Y$, there is a unique positive integer $d$ such that the map $\gamma$ factors as
\[
\R/T\Z \longrightarrow \R/(T/d)\Z \stackrel{\overline{\gamma}}{\longrightarrow} Y
\]
where $\overline{\gamma}$ is a simple Reeb orbit. We refer to the positive integer $d$ as the {\bf covering multiplicity\/} of $\gamma$; the Reeb orbit $\gamma$ is simple if and only if $d=1$. We define the {\bf symplectic action\/} of $\gamma$, denoted by $\mathcal{A}(\gamma)$, to be the period
\[
\mathcal{A}(\gamma) = T=\int_{\R/T\Z}\gamma^*\lambda.
\]

With these definitions out of the way, the first question one might ask is: Do Reeb orbits exist? The {\bf Weinstein conjecture\/}, formulated by Weinstein \cite{weinsteinconjecture} in the 1970s, asserts that for every contact form on a closed manifold, there exists at least one Reeb orbit. 

The Weinstein conjecture is known for the two basic examples in \S\ref{sec:Reebvf}, and these were motivations for the conjecture. For the first example, a theorem of Rabinowitz \cite{rabinowitz} asserts that every star-shaped hypersurface in $\R^{2n}$ has at least one Reeb orbit. We remark that a folk conjecture asserts that every star-shaped hypersurface in $\R^{2n}$ has at least $n$ distinct simple Reeb orbits. This was recently proved by Cineli-Ginzburg-Gurel \cite{cgg} under the assumption that the star-shaped hypersurface $Y$ is dynamically convex\footnote{A star-shaped hypersurface $Y\subset \R^{2n}$ is {\bf dynamically convex\/} if every Reeb orbit has Conley-Zehnder index at least $n+1$ with respect to a global trivialization of the contact structure.}, which as in \cite{hwz98} includes the case when $Y$ is strictly convex.

For the second example, the classical Lyusternik-Fet theorem asserts that every closed Riemannian manifold has at least one closed geodesic. And there are many results asserting the existence of more than one prime closed geodesic. For example, any Riemannian metric on a closed surface has infinitely many prime closed geodesics; in positive genus one can prove this by minimizing length in infinitely many prime free homotopy classes, and Bangert \cite{bangert} and Franks \cite{franks} proved this in genus zero.

The three-dimensional case of the Weinstein conjecture was proved by Taubes \cite{taubesweinstein} in 2006. In fact, Taubes proved the following stronger statement:

\begin{theorem}[Taubes \cite{taubesweinstein}]
\label{thm:taubesweinstein}
Let $Y$ be a closed three-manifold, let $\lambda$ be a contact form on $Y$ with associated contact structure $\xi$, and let $\Gamma\in H_1(Y)$ be a class such that $c_1(\xi)+2\op{PD}(\Gamma)\in H^2(Y;\Z)$ is torsion\footnote{Such classes $\Gamma$ always exist because $TY$ is trivial.}. Then there exists a nonempty finite set of Reeb orbits whose total homology class is $\Gamma$.
\end{theorem}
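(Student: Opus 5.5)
The plan is to follow Taubes' strategy via Seiberg--Witten Floer cohomology, adapted so as to produce orbit sets in the prescribed class $\Gamma$. Let $\mathfrak{s}_\xi$ be the canonical spin-c structure determined by $\xi$, and set $\mathfrak{s}=\mathfrak{s}_\xi+\op{PD}(\Gamma)$. Then $c_1(\mathfrak{s})=c_1(\xi)+2\op{PD}(\Gamma)$, which is torsion by hypothesis. For torsion spin-c structures, Kronheimer--Mrowka's monopole Floer cohomology $\widehat{HM}^*(Y,\mathfrak{s})$ is infinitely generated: it contains the image of the tower $\mathbb{Z}[U^{-1}]$ coming from reducibles in $\overline{HM}$, so there are nonzero classes of arbitrarily large grading. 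I would take this nontriviality as the topological input.

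Next, perturb the Seiberg--Witten equations on $\R\times Y$ by the contact form, using Taubes' perturbation with a large parameter $r$: the curvature term is perturbed by $-ir\,d\lambda$, and the metric is chosen adapted to $\lambda$, so that $\lambda$ is self-dual with respect to $d\lambda$ and the Reeb field. For $r$ large, each nonzero Floer class $\sigma$ of sufficiently high grading is represented by a solution $(A_r,\psi_r)$ of the three-dimensional equations, obtained by a min-max over cycles representing $\sigma$. One writes $\psi_r=(\alpha,\beta)$ in the splitting of the spinor bundle $E\oplus E\otimes\xi$, where $E$ is the line bundle with $c_1(E)=\op{PD}(\Gamma)$.

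The key estimate, and the step I expect to be the main obstacle, is a uniform bound on the energy $E(A_r)=i\int_Y\lambda\wedge F_{A_r}$ as $r\to\infty$. I would obtain it from spectral flow and the Chern--Simons functional. The grading of a solution is computed by the spectral flow of the Dirac operator, and Taubes' eta-invariant estimate bounds this by roughly $C r^{3/2}$ up to lower-order terms. Choosing $\sigma$ in the $U$-tower with grading of order $r^{2}$-scaled appropriately, and comparing the min-max value along the family in $r$ using the identity $\frac{d}{dr}(\text{action})=-\frac{1}{2}E$, should give $E(A_r)\le C$ independent of $r$, at least along a sequence $r_n\to\infty$. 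This step requires the delicate eta-invariant estimates of Taubes, and it is exactly where the torsion hypothesis enters, since it is needed for the Chern--Simons functional to be real-valued and for the gradings to behave well.

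Given bounded energy, one uses the a priori estimates $|\alpha|\le 1+O(r^{-1})$ and $|\beta|^2\le Cr^{-1}(1-|\alpha|^2)$, together with the curvature bound. These show that $r(1-|\alpha|^2)$ concentrates near the zero set $\alpha^{-1}(0)$. Since $F_A\approx -ir(1-|\alpha|^2)d\lambda/2$ plus small terms, the current $\frac{i}{2\pi}F_{A_{r_n}}$ converges, after passing to a subsequence, to a closed integral current with bounded mass. Its Poincar\'e dual represents $c_1(E)=\op{PD}(\Gamma)$, and $\alpha^{-1}(0)$ must be nonempty because the energy is positive for large grading. Finally, a local analysis shows that the vortex-like structure forces $\alpha^{-1}(0)$ to be nearly tangent to $\ker d\lambda$, so the limit is supported on Reeb orbits: one shows that the limiting current is invariant under the Reeb flow and rectifiable with integer multiplicities. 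This yields a nonempty finite collection of Reeb orbits, counted with multiplicity, whose total homology class is $\Gamma$.
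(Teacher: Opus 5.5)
The paper gives no proof of this theorem. It only cites Taubes and notes that his argument uses Seiberg--Witten theory, so your outline is in effect being compared with Taubes's proof, whose overall architecture you reproduce. However, the step you flag as the main obstacle, the uniform energy bound, has a genuine gap: the ingredients you list do not imply it.

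First, $\sigma$ must be one class of \emph{fixed} degree, used for all $r$. The torsion hypothesis supplies the absolute grading that makes this meaningful, and with the degree fixed the reducibles, whose degrees are of order $r^2$, eventually drop out. If the grading grew like $r^2$, you would only get $|\op{cs}|$ of order $r^2$, nothing would stop $E$ from growing like $r$, and there would be no single min-max function $r\mapsto\mathcal{F}_\sigma(r)$ to differentiate.

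With $\sigma$ fixed, the spectral flow estimate gives $|\op{cs}(A_r)|\le c\,r^{2-\delta}$. Taubes's exponent is $31/16$, not $3/2$; only $\delta>0$ matters. Write $\mathcal{F}_\sigma=\frac12(\op{cs}-rE)$ up to lower-order terms and set $v=-\mathcal{F}_\sigma/r$. Then the identity $\mathcal{F}_\sigma'=-\frac12E$ becomes $v'=\op{cs}/(2r^2)$. These facts alone give only $v\le C+c\,r^{1-\delta}$, i.e.\ $E=O(r^{1-\delta})$, which is not a bound.

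The missing ingredient is Taubes's a priori estimate $|\op{cs}(A)|\le c\,r^{2/3}E(A)^{4/3}$ for solutions. Combined with the crude bound, it gives $|\op{cs}|/r=o(E)$, hence $E\le 3v$. Then $(v^{-1/3})'\ge -c\,r^{-4/3}$, which is integrable. Since $v(r_0)^{-1/3}\ge c\,r_0^{-(1-\delta)/3}\gg r_0^{-1/3}$, the quantity $v^{-1/3}$ stays bounded away from zero, so $E$ stays bounded. The argument closes only through this estimate together with $\delta>0$.

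Two further points in your last paragraph also need repair. First, ``the energy is positive'' does not give a nonempty limit. If $T$ is the limit current, then $E(A_{r_n})\to 2\pi\langle T,\lambda\rangle$, so you need a uniform bound $E(A_r)\ge\delta_0>0$. This comes from a rigidity statement: for large $r$, a solution of very small energy is gauge equivalent to the distinguished solution corresponding to the empty orbit set, which exists only when $\Gamma=0$. You then choose $\sigma$ in a degree different from that solution's. This is the real reason nonzero classes in more than one degree are needed.

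Second, a limit of the currents $\frac{i}{2\pi}F_{A_{r_n}}$ only determines $\sum_i m_i[\gamma_i]$ in $H_1(Y;\R)$. It therefore loses the torsion part of $\Gamma$, which the theorem asserts. For the integral statement, show instead that for large $r$ the zero set $\alpha^{-1}(0)$, which represents $\Gamma$ in $H_1(Y;\Z)$, lies in small disjoint tubular neighborhoods of the $\gamma_i$, with $\alpha$ winding $m_i$ times around a meridian of the $i$-th one.
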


Here $c_1(\xi)\in H^2(Y;\Z)$ denotes the first Chern class of the contact structure, regarded as a complex line bundle via the orientation given by $d\lambda$. Also $\op{PD}(\Gamma)\in H^2(Y;\Z)$ denotes the Poincar\'e dual of $\Gamma$, defined using the orientation on $Y$ given by $\lambda\wedge d\lambda$.

Prior to Taubes's result, various special cases of the Weinstein conjecture were proved, sometimes with stronger statements. For example, Hofer \cite{hofer93} proved that if $Y$ is a closed connected three-manifold, if $\lambda$ is a contact form on $Y$, and if (i) $Y\simeq S^3$ or (ii) $\pi_2(Y)\neq 0$ or (iii) the contact structure $\xi=\op{Ker}(\lambda)$ is overtwisted\footnote{A contact structure $\xi$ on a three-manifold $Y$ is {\bf overtwisted\/} if there exists an embedded disk $D\subset Y$ such that $\xi$ agrees with $TD$ on $\partial D$. The contact structure $\xi$ is {\bf tight\/} if it is not overtwisted. Eliashberg \cite{eliashberg} showed that on a closed oriented three-manifold, each homotopy class of oriented plane fields on $Y$ contains an overtwisted contact structure which is unique up to contact isotopy. On the other hand Etnyre-Honda \cite{etnyre-honda} showed that there exist closed orientable three-manifolds that do not admit any tight contact structure.}, then $\lambda$ has a {\em contractible\/} Reeb orbit. Note that not all contact forms on closed three-manifolds have contractible Reeb orbits; a counterexample is the canonical contact form on the unit cotangent bundle of ${\mathbb T}^2=\R^2/\Z^2$ with the standard metric. Abbas-Cieliebak-Hofer \cite{ach} showed that if $\lambda$ is a contact form on a closed three-manifold such that the associated contact structure $\xi=\op{Ker}(\lambda)$ is planar\footnote{A contact structure on a closed three-manifold $Y$ is {\bf planar\/} if it is supported in the sense of Giroux \cite{giroux} by an open book decomposition of $Y$ with genus zero pages.}, then there exists a nonempty finite set of Reeb orbits with total homology class zero. It remains an open question whether this is true without the hypothesis that $\xi$ is planar. Colin-Honda \cite{colin-honda} showed that for a contact form $\lambda$ on a closed three-manifold, if the associated contact structure is supported by an open book decomposition whose monodromy is pseudo-Anosov with sufficiently large fractional Dehn twist coefficient, then there are infinitely many simple Reeb orbits, in distinct free homotopy classes.

The results mentioned in the previous paragraph were all proved using pseudoholomorphic curve techniques. On the other hand, Taubes's proof of Theorem~\ref{thm:taubesweinstein} uses Seiberg-Witten theory, and this is currently the only known proof of the Weinstein conjecture in all cases in three dimensions. See \cite{tw} for a longer survey. In higher dimensions the Weinstein conjecture is largely an open question.

\subsection{Two Reeb orbits}

The three-dimensional case of the Weinstein conjecture asserts that every contact form on a closed three-manifold has at least one Reeb orbit, but various improvements of this result are possible.

\begin{theorem}[\cite{twoorbits}, see proof in \S\ref{sec:twoorbitsproof}]
\label{thm:twoorbits}
Let $\lambda$ be a contact form on a closed three-manifold $Y$. Then $\lambda$ has at least two simple Reeb orbits.
\end{theorem}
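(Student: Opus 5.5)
Since the statement refers to a proof via the elementary spectral invariants, the plan is to use a sequence of spectral invariants $c_k(Y,\lambda)$, $k=0,1,2,\ldots$, with the following properties (which the paper will establish later and which I assume here). First, \emph{spectrality}: each $c_k$ is the total action $\sum_i m_i\mathcal{A}(\gamma_i)$ of some finite orbit set of Reeb orbits. Second, \emph{monotonicity} $c_k<c_{k+1}$ when $\lambda$ is nondegenerate, and $c_k\le c_{k+1}$ in general. Third, the \emph{volume property}
\[
\lim_{k\to\infty}\frac{c_k(Y,\lambda)^2}{k}=2\op{vol}(Y,\lambda),\qquad \op{vol}(Y,\lambda)=\int_Y\lambda\wedge d\lambda.
\]
Fourth, \emph{conformality and continuity}: $c_k(Y,e^f\lambda)$ depends continuously on $f$ in $C^0$ and increases if $f$ increases. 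Fifth, a \emph{$U$-map property}, expressing that generically the orbit sets realizing $c_k$ and $c_{k+1}$ are related by a holomorphic curve of ECH-type index $2$.

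The argument goes by contradiction. Suppose $\lambda$ has only one simple Reeb orbit $\gamma$, of action $T$. Then every orbit set is $\{(\gamma,m)\}$, so spectrality gives $c_k(Y,\lambda)\in T\cdot\Z_{\ge0}$ for all $k$. Note that $\gamma$ must be nondegenerate with irrational rotation, or else we easily get more orbits. Combining this with the volume property, the values $c_k$ grow like $\sqrt{2\op{vol}\,k}$ while being confined to the lattice $T\Z$. By pigeonhole, many consecutive $c_k$ must then coincide, that is, $c_k=c_{k+1}$ for infinitely many $k$. The main step is to show that equality $c_k=c_{k+1}$, or more precisely a long plateau of equal values, forces infinitely many simple Reeb orbits, or at least a second one.

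For this I would use the perturbation trick. Take $f\ge0$ supported away from $\gamma$, so that $\gamma$ remains the unique orbit for small $f$. Alternatively, multiply by a small bump whose effect is to change the volume while keeping the set of orbit actions fixed: choose $f$ supported near a point not on $\gamma$, and note that new orbits could in principle be created. The cleaner route is to compare the two invariants directly. The function $t\mapsto c_k(Y,e^{tf}\lambda)$ is continuous, takes values in the countable set $T\Z$ (since the orbit $\gamma$ and its action are unchanged when $f$ vanishes near $\gamma$ and no new orbits appear), and is therefore constant. Meanwhile the volume strictly increases, so the asymptotics of $c_k$ must change. This contradicts the volume property, because the constant sequences agree for all $k$. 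The obstacle is that a perturbation might create new orbits. I would handle this by arguing that if new orbits appear for arbitrarily small $t$, a limiting argument produces an orbit of $\lambda$ distinct from $\gamma$, since those orbits lie near $\operatorname{supp} f$ with bounded action by the monotonicity bound $c_k(e^{tf}\lambda)\le e^{t\max f}c_k(\lambda)$.

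I expect this last point to be the main obstacle: ensuring that the orbit realizing $c_k(e^{tf}\lambda)$ is not a new, short-lived orbit that disappears as $t\to0$. I would resolve it by using the action bound together with Arzel\`a–Ascoli compactness of Reeb trajectories of bounded period, obtaining a limiting Reeb orbit of $\lambda$. Since this limit passes through $\operatorname{supp}f$, which is disjoint from $\gamma$, it is a second simple orbit. Once this is settled, the volume-property contradiction completes the proof.
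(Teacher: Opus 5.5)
There is a genuine gap, and it is exactly at the step you flagged as the main obstacle.

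\textbf{Why your limiting argument fails.} Your perturbation argument is Irie's proof of the strong closing lemma (Theorem~\ref{thm:scl}). The contradiction it produces comes from the Weyl law, i.e.\ from the limit $k\to\infty$. So all it yields is that for some $t$ the form $e^{tf}\lambda$ has \emph{some} Reeb orbit meeting $\op{supp}(f)$, with no control on its action.

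When you shrink $f$ to $\delta f$ to pass to a limit, nothing prevents these actions from tending to infinity as $\delta\to0$. In that case Arzel\`a--Ascoli gives nothing. The bound $c_k(Y,e^{tf}\lambda)\le e^{t\max f}c_k(Y,\lambda)$ does not help: it concerns a fixed $k$, and your argument never identifies a fixed $k$ whose invariant is forced to move.

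Apart from the action spectrum being null, your perturbation step never uses the one-orbit hypothesis. Run verbatim on an irrational ellipsoid $\partial E(a,b)$ with $f$ supported away from $\gamma_1\cup\gamma_2$, it would show that $\lambda$ has a Reeb orbit through $\op{supp}(f)$. That is false, and there the actions of the created orbits really do blow up. Note also that the plateau $c_k=c_{k+1}$, which you correctly derive by pigeonhole, is never actually used.

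\textbf{The missing ingredient.} What you need is the Width Bound $c_{k+1}(Y,e^{g}\lambda)\ge c_k(Y,\lambda)+c_{\op{Gr}}(M_g)$. It turns the plateau into a single, fixed invariant that must move.

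Suppose $c_k(Y,\lambda)=c_{k+1}(Y,\lambda)=L$, and let $f\ge0$ be nonzero with support disjoint from $\gamma$. Then $c_{k+1}(Y,e^{\delta f}\lambda)>L$ for every $\delta>0$. The map $t\mapsto c_{k+1}(Y,e^{t\delta f}\lambda)$ is continuous, and $S(Y,\lambda)=T\Z_{\ge0}$ is discrete. Hence for some $t\in[0,1]$ this invariant takes a value in $(L,L+\eta]$ outside $S(Y,\lambda)$.

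By Spectrality, the realizing orbit set must then contain an orbit of $e^{t\delta f}\lambda$ meeting $\op{supp}(f)$, with action at most $L+\eta$ \emph{uniformly in $\delta$}. Only now does your compactness argument as $\delta\to0$ work. It produces a Reeb orbit of $\lambda$ through $\op{supp}(f)$, hence a second simple orbit.

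This is the paper's proof. A zero spectral gap gives $\op{Close}^L(Y,\lambda)=0$ by Proposition~\ref{prop:sgcb}, and Remark~\ref{rem:besse} then puts every point of $Y$ on a Reeb orbit.

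Separately, three of your assumed inputs are not available here: the $U$-map property, strict monotonicity for nondegenerate forms, and the claim that $\gamma$ is ``easily'' nondegenerate. The Increasing property only gives $c_k\le c_{k+1}$ for $k\ge1$. Your argument should not rely on any of them.
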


The lower bound of two simple Reeb orbits is the best possible without further hypotheses, as shown by the following example:

\begin{example}
\label{ex:ellipsoid}

Let $a,b>0$ be positive real numbers. Identify $\R^4=\C^2$ with coordinates $z_j=x_j+iy_j$ and define the {\bf ellipsoid\/}
\[
E(a,b) = \left\{z\in\C^2 \;\bigg|\;\frac{\pi|z_1|^2}{a} + \frac{\pi|z_2|^2}{b} \le 1\right\}.
\]
Let
\begin{equation}
\label{eqn:threedimensionalellipsoid}
Y = \partial E(a,b) = \left\{z\in\C^2 \;\bigg|\;\frac{\pi|z_1|^2}{a} + \frac{\pi|z_2|^2}{b} = 1 \right\}.
\end{equation}
Then $Y$ is a star-shaped hypersurface. For the contact form $\lambda=\lambda_0|_Y$, the Reeb vector field is given by
\begin{equation}
\label{eqn:ellipsoidReeb}
R = 2\pi\left(\frac{1}{a}\frac{\partial}{\partial\theta_1} + \frac{1}{b}\frac{\partial}{\partial\theta_2}\right)
\end{equation}
where $\theta_j$ denotes the argument of $z_j$, so that $\partial_{\theta_j}=x_j\partial_{y_j}-y_j\partial_{x_j}$. It follows from \eqref{eqn:ellipsoidReeb} that there is a simple Reeb orbit $\gamma_1$ of period $a$ given by the circle where $z_2=0$, and there is a simple Reeb orbit $\gamma_2$ of period $b$ given by the circle where $z_1=0$. If $a/b$ is irrational, then there are no other simple Reeb orbits. (On the other hand if $a/b$ is rational, then every point is on a Reeb orbit, and the simple Reeb orbits other than $\gamma_1$ and $\gamma_2$ have period equal to the least common multiple of $a$ and $b$.)
\end{example}

Note that $Y$ above, and more generally any star-shaped hypersurface in $\R^4$, is diffeomorphic to $S^3$. One can modify the above example to obtain contact forms on lens spaces with exactly two simple Reeb orbits. Recall that if $p,q$ are relatively prime positive integers, then the {\bf lens space\/} $L(p,q)$ is a smooth three-manifold defined as the quotient of the $\Z/p$ action on $S^3=\{z\in\C^2\mid \|z\|=1\}$ whose generator sends
\begin{equation}
\label{eqn:lensspaceaction}
(z_1,z_2) \longmapsto \left(e^{2\pi i/p}z_1,e^{2\pi iq/p}z_2\right).
\end{equation}
Equation \eqref{eqn:lensspaceaction} also describes a $\Z/p$ action on $\C^2$ which preserves the standard Liouville form \eqref{eqn:standardLiouvilleform} and the three-dimensional ellipsoid $Y$ in equation \eqref{eqn:threedimensionalellipsoid}. Thus the contact form $\lambda=\lambda_0|_Y$ descends to a contact form on the quotient three-manifold $Y/(\Z/p)$, which is diffeomorphic to the lens space $L(p,q)$, and this contact form has only two simple Reeb orbits when $a/b$ is irrational.

%%%%%%%%%%%%%%%%%%%%%%%%%%%%%%%%%%%%%

\subsection{Contact three-manifolds with exactly two simple Reeb orbits}

%%%%%%%%%%%%%%%%%%%%%%%%%%%%%%%%%%%%%

In fact, contact three-manifolds with exactly two simple Reeb orbits are quite rare, and mostly agree with the above examples. To start, we have:

\begin{theorem}[\cite{cghhl1} plus \cite{tw}]
\label{thm:exactlytwo}
Let $\lambda$ be a contact form on a closed three-manifold $Y$. Suppose that $\lambda$ has exactly two simple Reeb orbits. Then $Y$ is diffeomorphic to $S^3$ or a lens space, and the two simple Reeb orbits are the core circles of the solid tori in a genus one Heegaard splitting of $Y$.
\end{theorem}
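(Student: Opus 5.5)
The plan is to follow the strategy of \cite{cghhl1}, using embedded contact homology together with Taubes's isomorphism ECH $\simeq$ Seiberg--Witten Floer cohomology \cite{tw}. Suppose $\lambda$ has exactly two simple Reeb orbits $\gamma_1,\gamma_2$. The first step is to show that both orbits are irrationally elliptic, that is, nondegenerate with all iterates nondegenerate and with irrational rotation numbers $\theta_1,\theta_2$. The input here is the volume property of ECH spectral invariants: the spectral invariants $c_\sigma(Y,\lambda)$ associated to a $U$-sequence $\sigma$ satisfy
\[
\lim_{k\to\infty}\frac{c_{\sigma_k}(Y,\lambda)^2}{k} = 2\op{vol}(Y,\lambda).
\]
Each spectral invariant is a sum $m_1\mathcal{A}(\gamma_1)+m_2\mathcal{A}(\gamma_2)$, and the $U$-map strictly increases the action (after a perturbation argument in the degenerate case). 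This forces the ECH generators to grow quadratically in the way they do for an ellipsoid. If some iterate were hyperbolic or degenerate, I would try to derive a contradiction from the grading and index growth, as in the proof of Theorem~\ref{thm:twoorbits}: the lattice-point count of admissible pairs $(m_1,m_2)$ at each grading would then fail to match the rank of ECH. I would handle degeneracy by a nondegenerate perturbation together with continuity of the spectral invariants.

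The second step is to show that both orbits are elliptic and that $\gamma_1,\gamma_2$ form a Hopf-type link whose complement fibers. Knowing the orbits are irrationally elliptic, I would use ECH index-one (or $U$-map) curves counted in the symplectization, which are asymptotic to the orbits, to produce a foliation of $Y\setminus(\gamma_1\cup\gamma_2)$. Concretely, the $U$-map is nonzero on the relevant classes, so through a generic point there is an embedded holomorphic curve of ECH index $2$. Using intersection positivity, writhe bounds and partition conditions, I would show that its projection to $Y$ is a cylinder from $\gamma_1$ to $\gamma_2$, or a disk-like surface bounded by covers of them, and that these curves foliate the complement. This exhibits each $\gamma_i$ as the binding of a rational open book with annular pages. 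Such a manifold is a union of two solid tori glued along a torus, with the $\gamma_i$ as core circles. Hence $Y$ is $S^3$ or a lens space, with a genus one Heegaard splitting whose cores are the two orbits. (One must also rule out $S^1\times S^2$; I expect this to follow from the ECH of $S^1\times S^2$ not matching ellipsoid-type growth, or from the torsion/linking data.)

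The main obstacle is the second step: establishing that the moduli spaces of $U$-curves really consist of embedded cylinders or annuli that sweep out the complement without breaking. Compactness must be controlled using the fact that only two simple orbits exist, so that any SFT-type breaking would produce an index or action inconsistency. The first thing I would try is the relative adjunction formula combined with the ECH partition conditions for elliptic orbits with irrational rotation numbers. Together these should pin down the genus and the ends of each $U$-curve and show that distinct curves are disjoint.
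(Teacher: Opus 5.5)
Your two-step architecture matches how the paper assembles the theorem. \cite{cghhl1} supplies exactly the statement that a contact form with exactly two simple Reeb orbits is nondegenerate. The nondegenerate case, which is your second step, is the result of \cite{tw}. (Taubes's isomorphism itself is \cite{taubes}.) The genuine gap is in your first step in the degenerate case, and that case is the entire new content of \cite{cghhl1}.

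Perturbing to a nondegenerate form and invoking continuity of spectral invariants does not carry the hypothesis along. The perturbed form in general no longer has exactly two simple orbits, because degenerate iterates $\gamma_i^m$ can bifurcate into new orbits of action near $m\mathcal{A}(\gamma_i)$. So there is no lattice-point-versus-rank comparison available for it. Continuity only tells you that the limiting spectral values lie in $\{m_1T_1+m_2T_2\}$, where $T_i=\mathcal{A}(\gamma_i)$. But the mechanism you name, counting pairs $(m_1,m_2)$ against the growth forced by the volume property, involves only $T_1$, $T_2$, the volume and homology data. It never sees the linearized return maps, so it cannot distinguish a degenerate configuration from an irrational ellipsoid. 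Your counting does work when $\lambda$ is nondegenerate: there it rules out hyperbolic orbits, and irrational ellipticity is then automatic. But that case needs no new argument.

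Relatedly, in the degenerate case strict increase of the spectral invariants cannot come from a perturbation argument, since limits of strict inequalities are only weak. It comes from Proposition~\ref{prop:sgcb} together with Remark~\ref{rem:besse}, as in the proof of Theorem~\ref{thm:twoorbits}.

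What is missing is an input that sees the linearized dynamics at the two orbits. First, you need $T_1/T_2\notin\mathbb{Q}$, which follows verbatim from the argument of Theorem~\ref{thm:twoorbits}. Otherwise all orbit-set actions lie in $s\Z_{\ge 0}$ for some $s>0$, strict increase gives $c_k\ge ks$, and this contradicts the Weyl law. Second, the rotation numbers $\theta_i$ of $\gamma_i$ must be tied to the actions and the volume through the spectral asymptotics. These rotation numbers make sense for degenerate orbits, and they govern the Conley--Zehnder and linking contributions to the grading of orbit sets of nearby perturbations lying over $\gamma_1^{m_1}\gamma_2^{m_2}$. The ellipsoid-type relations among actions, rotation numbers and volume established in \cite{cghhl1} (see the remark after the theorem) are of this kind. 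Combined with $T_1/T_2\notin\mathbb{Q}$, they force $\theta_i\notin\mathbb{Q}$, so each $\gamma_i$ is elliptic with no degenerate iterate, which is the nondegeneracy statement.

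Your second step is a reasonable plan for the theorem the paper takes from \cite{tw}, though as written it is only a plan. Your guess for excluding $S^1\times S^2$ is right. The volume property in a fixed class $\Gamma$ forces $[\gamma_1],[\gamma_2]$ to be torsion in $H_1(Y)$; otherwise the orbit sets in class $\Gamma$ lie on a line, and only $O(L)$ of them have action $\le L$. By contrast, the cores of the genus one splitting of $S^1\times S^2$ generate $H_1\cong\Z$.
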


\begin{remark}
The conclusion of Theorem~\ref{thm:exactlytwo} was proved in \cite{tw} under the additional hypothesis that the contact form $\lambda$ is nondegenerate (see Definition~\ref{def:nondegenerate} below). Then \cite{cghhl1} showed that this hypothesis is redundant: If a contact form on a closed three-manifold has exactly two simple Reeb orbits, then it must be nondegenerate!
\end{remark}

To clarify the above, for $t\in\R$, let $\phi_t:Y\to Y$ denote the time $t$ flow of the Reeb vector field $R$. Observe that the Lie derivative $\mathcal{L}_R\lambda=0$, so $\phi_t^*\lambda=\lambda$. Now let $\gamma:\R/T\Z\to Y$ be a Reeb orbit. Then the derivative of $\phi_T$ at $\gamma(0)$ restricts to a symplectic linear map
\[
P_\gamma: (\xi_{\gamma(0)},d\lambda) \longrightarrow (\xi_{\gamma(0)},d\lambda),
\]
which we call the {\bf linearized return map\/}. The eigenvalues of $P_\gamma$ are invariant under reparametrization of $\gamma$ by precomposition with a translation of $\R/T\Z$.

\begin{definition}
\label{def:nondegenerate}
The Reeb orbit $\gamma$ is {\bf nondegenerate\/} if the linearized return map $P_\gamma$ does not have $1$ as an eigenvalue. The contact form $\lambda$ is {\bf nondegenerate\/} if all (not necessarily simple) Reeb orbits are nondegenerate.
\end{definition}

For example, the ellipsoid $\partial E(a,b)$ is nondegenerate if and only if $a/b$ is irrational. For a given contact structure $\xi$, a $C^\infty$-generic contact form $\lambda$ with $\op{Ker}(\lambda)=\xi$ is nondegenerate; see e.g. \cite[\S6]{hwz98}.

\begin{remark}
More is proved in \cite{cghhl1} about contact forms with exactly two simple Reeb orbits. For example, in the case when $Y\simeq S^3$, it is shown that the two simple orbits have self-linking number $-1$ and form a Hopf link; and the symplectic actions and rotation numbers of the two simple orbits and the contact volume satisfy the same numerical relations as in an ellipsoid; see \cite[Eq.\ (1-3)]{cghhl1}. However there exist contact forms on $S^3$ with exactly two simple Reeb orbits whose Reeb flows are not conjugate to that of an ellipsoid. This can be seen from Katok's examples \cite{katok} or the construction in \cite{agz}; see \cite[Rem.\ 1.7]{cghhl1}.
\end{remark}

\subsection{Two or infinity}

It is not known whether there exists a contact form on a closed connected three-manifold with more than two but only finitely many simple Reeb orbits. In most cases, we know that this is not possible:

\begin{theorem}[\cite{cdr} plus \cite{cghhl2}]
\label{thm:toi}
Let $\lambda$ be a contact form on a closed connected three-manifold $Y$. Suppose that at least one of the following holds:
\begin{description}
\item{(a)} $\lambda$ is nondegenerate.
\item{(b)} $c_1(\xi)\in H^2(Y;\Z)$ is torsion.
\end{description}
Then $\lambda$ has either two or infinitely many simple Reeb orbits.
\end{theorem}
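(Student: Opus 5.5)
The plan is to combine Theorem~\ref{thm:twoorbits} (at least two simple orbits) with a dichotomy argument. Suppose $\lambda$ has only finitely many simple Reeb orbits $\gamma_1,\dots,\gamma_k$; we must show $k=2$. Finitely many simple orbits forces strong rigidity on the action spectrum. Every Reeb orbit is an iterate $\gamma_i^d$, so every orbit set has action in the discrete set $\{\sum m_i\mathcal{A}(\gamma_i)\}$. We will play this against the volume asymptotics of spectral invariants, namely the ECH volume property $c_\sigma(Y,\lambda)^2/I(\sigma)\to 2\op{vol}(Y,\lambda)$, or its analogue for the elementary spectral invariants. The case split (a)/(b) reflects the two routes in \cite{cdr} and \cite{cghhl2}.

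\textbf{Case (a), $\lambda$ nondegenerate.} Here I would follow \cite{cdr}. Use the $U$-map on ECH: sequences $\sigma_k$ with $U\sigma_{k+1}=\sigma_k$ exist in a suitable class $\Gamma$, and the ECH index of the generators is computed combinatorially from the Conley--Zehnder indices of iterates. With finitely many simple orbits, each orbit is either hyperbolic or elliptic with irrational rotation number $\theta_i$. The index grows with the $\lfloor m\theta_i\rfloor$ terms. Equating growth rates of index and action, via the volume property, with the lattice-point counting gives an identity of the form
\[
\sum_i \frac{\mathcal{A}(\gamma_i)}{\theta_i}\cdots=\op{vol}.
\]
The $U$-map structure (each $U$ decreases index by $2$ and must be realized by holomorphic curves with positive action drop) produces a global surface of section, or a Birkhoff-type section, bounded by some of the $\gamma_i$. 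The return map is then an area-preserving map of an annulus or disk with finitely many periodic points. Franks' theorem (or the Le Calvez--Yoccoz style result) then forces the section to be an annulus with no interior periodic points, whence $k=2$.

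\textbf{Case (b), $c_1(\xi)$ torsion.} Here I would follow \cite{cghhl2}, which avoids nondegeneracy by using the spectral invariants directly. Torsion $c_1$ gives a $\Z$-grading and infinitely many nonzero classes with spectral invariants $c_k$. The key lemma is that if there are finitely many simple orbits, then for each $k$ the invariant $c_k$ is realized by an orbit set, and $c_{k+1}-c_k\to0$ cannot hold unless the orbits are arranged so that a $U$-curve (a $J$-holomorphic curve counted by $U$) exists through a generic point. Repeating with points chosen generically yields curves whose projections to $Y$ sweep out $Y$. Finitely many orbits then makes these curves form an open book or a genus-zero global section with binding consisting of orbits. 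From there one is again in the setting of an area-preserving surface map. Franks' "two or infinity" for annulus maps (any periodic point of a finite-orbit area-preserving annulus homeomorphism forces infinitely many) gives the conclusion.

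The main obstacle is constructing the global surface of section from the $U$-map curves under finiteness, especially without nondegeneracy in case (b). I would first try to get it via the "curves through every point with bounded energy" compactness argument, then use the finiteness to identify the binding and show the pages are embedded and transverse to $R$ via intersection positivity and the index inequality. The action-volume calculation and the final appeal to Franks should be comparatively routine.
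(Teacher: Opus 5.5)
Your proposal has a genuine gap: it relies entirely on a step you assert but never argue.

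The paper does not prove this theorem. It quotes it from \cite{cdr} and \cite{cghhl2}, and says explicitly, under ``Additional topics'', that it cannot currently be proved with the elementary spectral invariants, because it needs the ECH spectral invariants and the holomorphic curves behind the $U$-map. So your fallback ``or its analogue for the elementary spectral invariants'' does not help: the Weyl law and Spectrality for $c_k$ only give the two-orbit argument of \S\ref{sec:twoorbitsproof}.

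The step you assert in both cases is ``finitely many simple orbits $\Rightarrow$ the $U$-curves assemble into a global surface of section bounded by some $\gamma_i$, of disk or annulus type''. Nothing you actually write uses finiteness. Spectrality holds for every contact form, and index-$2$ $U$-curves through a generic point exist for every nondegenerate form. So your ``key lemma'' in case (b) has no content, and the identity $\sum_i\mathcal{A}(\gamma_i)/\theta_i\cdots=\op{vol}$ is neither specified nor used.

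In case (a), the mechanism you sketch is essentially that of \cite{cghp}, which needs the torsion hypothesis. \cite{cdr} removes that hypothesis by building from the $U$-curves only a broken book decomposition. Its pages may have any genus, and it may break along hyperbolic binding orbits. One then needs separate dynamical input for two things: showing that broken components force infinitely many periodic orbits, and handling return maps on surfaces other than a disk or annulus, where Franks' theorem as you quote it does not apply. Also, Franks' theorem does not ``force an annulus'': a disk-like section whose return map has a single interior fixed point also gives exactly two orbits.

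In case (b), the missing idea is how to handle degeneracy, which is exactly what \cite{cghhl2} contributes. ECH, the ECH index, the index inequality, and the $U$-curves are only available for nondegenerate forms, so you would have to work with nondegenerate perturbations $\lambda_n\to\lambda$. These typically have many more simple orbits than $\lambda$, since degenerate orbits bifurcate. So the finiteness hypothesis is not inherited by the forms whose curves you can actually analyze. Moreover, as $n\to\infty$ the curves can break or become multiply covered along degenerate orbits, so embeddedness of pages and transversality to $R$ are not automatic in the limit. Your suggested remedy (``compactness, intersection positivity and the index inequality'') does not address this.

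As written, the proposal restates the strategy described in the remark after the theorem: holomorphic curves, then a genus zero Birkhoff section, then Franks. It leaves unproved every step at which hypothesis (a) or (b) is actually used.
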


\begin{remark}
Hofer-Wysocki-Zehnder \cite{hwz03} showed that a star-shaped hypersurface in $\R^4$ has either two or infinitely many simple Reeb orbits, assuming that the contact form is nondegenerate and the stable and unstable manifolds of all hyperbolic Reeb orbits intersect transversely. The proof uses holomorphic curves to find a genus zero Birkhoff section for the Reeb flow, thus reducing the problem to the dynamics of area-preserving surface diffeomorphisms, and then invokes a theorem of Franks \cite{franks}, asserting that every area-preserving homeomorphism of an open disk has either one or infinitely many periodic points. In \cite{cghp}, this approach was generalized, using holomorphic curves arising from embedded contact homology, to prove two or infinitely many simple Reeb orbits for a contact form on a closed connected three-manifold satisfying both (a) and (b) above. In \cite{cdr} it was shown that one can assume only (a). In \cite{cghhl2} it was shown that one can assume only (b).
\end{remark}

%%%%%%%%%%%%%%%%%%%%%%%%%%%%%%

\subsection{Closing lemmas}
\label{sec:scl}

%%%%%%%%%%%%%%%%%%%%%%%%%%%%%%%

Irie \cite{irieclosing} proved that for a generic contact form on a closed three-manifold, not only are there infinitely many simple Reeb orbits, but the Reeb orbits are dense:

\begin{theorem}
[Irie \cite{irieclosing}]
\label{thm:iriedensity}
Let $Y$ be a closed three-manifold. For a $C^\infty$-generic contact form on $Y$, the union of the (images of the) Reeb orbits is dense in $Y$.
\end{theorem}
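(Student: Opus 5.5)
The plan is to deduce the density statement from a ``$C^\infty$ closing lemma'' stated in terms of spectral invariants. Concretely, the closing lemma says: for any contact form $\lambda$ on $Y$ and any nonempty open set $U\subset Y$, there is a $C^\infty$-small perturbation of $\lambda$, supported in $U$, that has a Reeb orbit passing through $U$. A Baire category argument then gives the theorem. Fix a countable basis $\{U_k\}$ for the topology of $Y$, and let $\mathcal{G}_k$ be the set of contact forms with a \emph{nondegenerate} Reeb orbit meeting $U_k$. Nondegenerate orbits persist under $C^\infty$-small perturbation, so $\mathcal{G}_k$ is open. If each $\mathcal{G}_k$ is also dense, then $\bigcap_k\mathcal{G}_k$ is residual. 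Every form in this intersection has a Reeb orbit through every basis element, so the union of its Reeb orbits is dense.

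For density of $\mathcal{G}_k$, start from any $\lambda$ and perturb it to a nondegenerate form; by the genericity remark after Definition~\ref{def:nondegenerate} this is possible. It then suffices to show that a nondegenerate $\lambda$ can be perturbed in $U_k$ to acquire an orbit through $U_k$, which will automatically be nondegenerate after a further small generic perturbation. Choose a nonnegative bump function $f$ supported in $U_k$ and not identically zero, and consider the family $\lambda_t=(1+tf)\lambda$ for $t\in[0,\epsilon]$. The contact volume $\op{vol}(\lambda_t)=\int\lambda_t\wedge d\lambda_t$ is strictly increasing in $t$.

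The key input is a sequence of spectral invariants $c_k(\lambda)$: either the ECH spectral invariants, or the elementary spectral invariants the paper is about to introduce. These should have three properties. \emph{Monotonicity}: $\lambda\le\lambda'$ pointwise implies $c_k(\lambda)\le c_k(\lambda')$. \emph{Spectrality}: each $c_k(\lambda)$ is the total action of a finite set of Reeb orbits. \emph{Volume asymptotics}: $c_k(\lambda)^2/k\to 2\op{vol}(\lambda)$, or at least a Weyl-type lower bound in the limit.

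Suppose, for contradiction, that no $\lambda_t$ has a Reeb orbit through $U_k$. Then every $\lambda_t$ has the same Reeb orbits with the same actions as $\lambda$, since the perturbation is supported away from them. For nondegenerate $\lambda$ the action spectrum, which consists of finite sums of actions, is countable and hence totally disconnected. Each $c_k(\lambda_t)$ is continuous in $t$ and takes values in this fixed spectrum, so it is constant in $t$. But the volume asymptotics force $c_k(\lambda_\epsilon)>c_k(\lambda_0)$ for large $k$, because $\op{vol}(\lambda_\epsilon)>\op{vol}(\lambda_0)$. This contradiction shows that some $\lambda_t$ with $t\le\epsilon$ has an orbit through $U_k$.

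The main obstacle is the volume property. Everything else is formal, but the Weyl law for ECH spectral invariants is deep: it rests on Taubes's isomorphism with Seiberg--Witten theory. If only the elementary invariants are available, a weaker statement suffices. All we need is that $c_k(\lambda_\epsilon)-c_k(\lambda)$ is eventually positive, which follows from a lower bound of the form $\liminf_k c_k^2/k\ge 2\op{vol}$ together with an upper bound $\limsup_k c_k^2/k\le 2\op{vol}$. I would try to prove these bounds by comparison with ellipsoid-like model pieces embedded in symplectizations, using monotonicity.
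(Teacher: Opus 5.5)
Your main argument is correct and follows the paper's route. It is Irie's argument for the strong closing lemma (Theorem~\ref{thm:scl}): Spectrality and continuity force $t\mapsto c_k(\lambda_t)$ to be constant, which contradicts the Weyl law because the volume strictly increases. You combine this with the standard Baire-category step that the paper leaves implicit. Your one variation, passing first to a nondegenerate $\lambda$ so that the action spectrum is countable, is a legitimate shortcut that replaces Lemma~\ref{lem:measurezero}. The paper needs that lemma because it proves the closing lemma for arbitrary $\lambda$.

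Your final paragraph, however, is mistaken and should be dropped. The two bounds you list together \emph{are} the Weyl law, not a weaker statement. The upper half also cannot be obtained from model pieces via monotonicity. Lemma~\ref{lem:ckmonotonicity} turns embeddings of balls or ellipsoids into the symplectization of $Y$ into \emph{lower} bounds on $c_k(Y,\lambda)$; this is exactly the proof of Lemma~\ref{lem:weyllb}. An upper bound requires producing holomorphic curves of bounded energy through $k$ points, i.e.\ placing $(Y,\lambda)$ below something whose invariants are already known to be finite. The notes can do this only for star-shaped $Y$, using the ambient ball (Theorem~\ref{thm:ssweyl}). For general $Y$, even $c_1(Y,\lambda)<\infty$ would reprove the three-dimensional Weinstein conjecture, and the only known proof goes through ECH and Seiberg--Witten theory. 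So Theorem~\ref{thm:ckweyl} has to be cited as a black box, as the paper does.

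A genuinely weaker sufficient input is sublinear growth $c_k=o(k)$. If $c_k(Y,e^{f}\lambda)=c_k(Y,\lambda)$ for all $k$, the Width Bound gives $c_{k+1}(Y,\lambda)\ge c_k(Y,\lambda)+c_{\op{Gr}}(M_f)$, which forces linear growth. But this is still an upper bound requiring the same hard input.

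Two small repairs are also needed:
\begin{enumerate}
\item You use continuity of $t\mapsto c_k(\lambda_t)$ and finiteness of $c_k$, which are not among your three listed properties. Continuity follows from Monotonicity plus Conformality, and finiteness from the Weyl law together with the Increasing property.
\item A degenerate orbit need not survive a generic perturbation. Instead, perturb to $e^{g}\lambda$ with $g$ and $dg$ vanishing along the orbit. The orbit then remains a Reeb orbit with the same period, and a suitable normal Hessian of $g$ makes its linearized return map avoid the eigenvalue $1$.
\end{enumerate}
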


The key to the proof is a ``closing lemma'', asserting that given a contact form $\lambda$ on $Y$ and given a nonempty set $\mathcal{U}\subset Y$, one can make a $C^\infty$-small perturbation of $\lambda$ to create a Reeb orbit passing through $\mathcal{U}$. More precisely, Irie proved the following ``strong closing lemma'':

\begin{theorem}
[Irie \cite{irieclosing}, see proof in \S\ref{sec:irietrick}]
\label{thm:scl}
Let $Y$ be a closed three-manifold, let $\lambda$ be a contact form on $Y$, and let $f:Y\to \R_{\ge 0}$ be a smooth function, not identically zero. Then there exists $\tau\in[0,1]$ such that the contact form $e^{\tau f}\lambda$ has a Reeb orbit intersecting $\op{supp}(f)$.
\end{theorem}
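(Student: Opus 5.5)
We will prove Theorem~\ref{thm:scl} using a family of spectral invariants $c_k(Y,\lambda)\in\R$, indexed by $k=1,2,\ldots$. These may be the ECH spectral invariants or the elementary spectral invariants introduced later. We use four properties of these invariants.
\begin{description}
\item{(i) Spectrality:} each $c_k(Y,\lambda)$ is the total action of a finite set of Reeb orbits (with multiplicities).
\item{(ii) Monotonicity:} if $f\ge 0$ then $c_k(Y,e^f\lambda)\ge c_k(Y,\lambda)$.
\item{(iii) Continuity:} $c_k$ depends $C^0$-continuously on $\lambda$.
\item{(iv) Volume property:} $\lim_{k\to\infty} c_k(Y,\lambda)^2/k = 2\,\op{vol}(Y,\lambda)$, where $\op{vol}(Y,\lambda)=\int_Y\lambda\wedge d\lambda$.
\end{description}
For the elementary invariants one may only have an inequality version of (iv), giving a lower bound of the limit by the volume. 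The argument below only needs a strict increase of the asymptotics when the volume strictly increases.

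The argument goes by contradiction (Irie's trick). Suppose that for every $\tau\in[0,1]$, no Reeb orbit of $\lambda_\tau:=e^{\tau f}\lambda$ meets $\op{supp}(f)$. Every Reeb orbit of $\lambda_\tau$ then lies in the region where $f=0$. On that region $\lambda_\tau=\lambda$, so the Reeb vector fields agree there. Hence the Reeb orbits of $\lambda_\tau$ are exactly the Reeb orbits of $\lambda$ avoiding $\op{supp}(f)$, with the same actions. By spectrality, $c_k(Y,\lambda_\tau)$ lies in the fixed set $\mathcal{S}$ of finite sums of actions of such orbits. This set is independent of $\tau$.

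Next we show $\mathcal{S}$ has measure zero. This is the step needing care: in the degenerate case the action spectrum can still be shown to be closed and of measure zero, by a Sard-type argument applied to the action functional on the critical manifold of closed orbits. Granting this, the map $\tau\mapsto c_k(Y,\lambda_\tau)$ is continuous by (iii) and takes values in a measure-zero set, so it is constant. Thus $c_k(Y,\lambda_1)=c_k(Y,\lambda)$ for every $k$.

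On the other hand, $f\ge0$ and $f$ is not identically zero, so $\op{vol}(Y,e^f\lambda)=\int_Y e^{2f}\lambda\wedge d\lambda>\op{vol}(Y,\lambda)$. By (iv), $c_k(Y,e^f\lambda)>c_k(Y,\lambda)$ for all sufficiently large $k$, which is the desired contradiction.

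The main obstacle is really establishing the properties of the invariants themselves, above all the volume property (iv). This is deep for ECH, since it relies on Taubes's Seiberg--Witten correspondence. For the elementary invariants it is the point where one must at least obtain a lower bound on the growth rate that detects a volume increase. Within the argument above, the delicate point is the measure-zero claim for $\mathcal{S}$ without a nondegeneracy assumption. As a first attempt we would avoid it by perturbing $\lambda$ to be nondegenerate away from $\op{supp}(f)$ and passing to a limit using (iii), or by working with a countable action spectrum.
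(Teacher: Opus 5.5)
Your main argument is the paper's proof. Under the contradiction hypothesis the action spectrum is frozen. Spectrality, the measure-zero lemma (Lemma~\ref{lem:measurezero}, which the paper simply cites from Irie and Schwarz) and $C^0$-continuity then force $\tau\mapsto c_k(Y,e^{\tau f}\lambda)$ to be constant. Finally, the Weyl law applied to both $\lambda$ and $e^f\lambda$ contradicts $\op{vol}(Y,e^f\lambda)>\op{vol}(Y,\lambda)$.

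One of your side remarks is wrong: a lower-bound-only version of (iv) would \emph{not} suffice, and the lower bound is not where the difficulty lies. Knowing $c_k(Y,\lambda)=c_k(Y,e^f\lambda)$ for all $k$ gives a contradiction only if you combine two facts:
\begin{itemize}
\item $\liminf_{k} c_k(Y,e^f\lambda)^2/k\ge 2\op{vol}(Y,e^f\lambda)$, which is the easy half (Lemma~\ref{lem:weyllb});
\item $\limsup_{k} c_k(Y,\lambda)^2/k\le 2\op{vol}(Y,\lambda)$, which is the hard upper-bound half.
\end{itemize}
Lower bounds alone are consistent with both sequences agreeing. For instance, $c_k=+\infty$ for all $k\ge 1$ satisfies every lower bound.

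The upper bound is also what gives $c_k<\infty$. Without finiteness, Spectrality says nothing, so the continuity/measure-zero step cannot start; you assumed $c_k\in\R$ at the outset, but the paper derives it from the Weyl law. The paper has the full Weyl law for elementary spectral invariants (Theorem~\ref{thm:ckweyl}), so no weakening is needed.

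Your proposed workaround for degenerate $\lambda$ is also unnecessary, since the measure-zero lemma is stated for arbitrary contact forms. One caution on your sketch of it: null-ness of the set of orbit-set actions does not follow from null-ness of the set of single periods, because a sum of null sets can have positive measure. The Sard argument has to be applied to the sums directly.
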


This result should be compared with Pugh's $C^1$-closing lemma \cite{pugh}, which asserts that given a nonwandering point $x$ of a $C^1$-diffeomorphism $f$ of a closed manifold, one can find a $C^1$-small perturbation $g$ of $f$ so that $x$ is a periodic point of $g$. The proof requires great care in constructing the perturbation, and extending the result to higher regularity seems very difficult. By contrast, Irie's strong closing lemma asserts that one can create a Reeb orbit through $\mathcal{U}$ by a $C^\infty$ small perturbation of the contact form via any nonnegative perturbation supported in $\mathcal{U}$. As we will see in \S\ref{sec:irietrick}, the proof does not even require that $f$ is nonnegative, only that
\[
\int_Y \left(e^{2f} - 1\right) \lambda\wedge d\lambda \neq 0.
\]

\begin{remark}
It is not known whether Irie's strong closing lemma extends to all higher dimensional contact manifolds. This has been proved for the special case of ellipsoids and some related examples \cite{cdpt,cs,xue}. A difficulty with extending the methods presented in these lecture notes to general higher dimensional cases is that one does not expect sublinear growth of spectral invariants as in Theorem~\ref{thm:ckweyl} below. The reason is that in the formula for the Fredholm index of holomorphic curves in higher dimensions, see \S\ref{sec:fredholmindex}, the coefficient of the genus is nonpositive.
\end{remark}

\begin{remark}
Irie \cite{irieequi} also proved that for a $C^\infty$-generic contact form $\lambda$ on a closed three-manifold $Y$, an even stronger statement holds than the density of Reeb orbits in Theorem~\ref{thm:iriedensity}: namely, there is a sequence of Reeb orbits which is equidistributed with respect to the measure $\lambda\wedge d\lambda$.
\end{remark}

%%%%%%%%%%%%%%%%%%%%%%%%%%

\subsection{Quantitative closing lemmas}
\label{sec:qcl}

%%%%%%%%%%%%%%%%%%%%%%%%%

Irie's strong closing lemma asserts that given a contact form $\lambda$ on a closed three-manifold $Y$, one can create a Reeb orbit intersecting a nonempty open set $\mathcal{U}\subset Y$ by a $C^\infty$-small perturbation of $\lambda$ supported in $\mathcal{U}$. If $\lambda$ does not already have a Reeb orbit intersecting $\mathcal{U}$, and if the perturbation is small, then one might expect the Reeb orbit created to have a large period (because a short trajectory would not be perturbed enough to close up). One can then ask for a quantitative refinement of Irie's strong closing lemma: Given $L>0$, how much do we need to perturb the contact form in order to create a Reeb orbit through $\mathcal{U}$ of period at most $L$? As we will see, the answer is, roughly, $O(L^{-1})$. 

To make a precise statement, we first define an appropriate measure of the ``size'' of a deformation of the contact form (Definition~\ref{def:width} below).

If $n$ is a positive integer and $r>0$, define the ball
\[
B^{2n}(r) = \left\{z\in\C^n \;\big|\; \pi|z|^2\le r\right\},
\]
equipped with the standard symplectic form $\omega_0$ on $\C^n=\R^{2n}$. If $(X,\omega)$ is a $2n$-dimensional symplectic manifold, recall that its {\bf Gromov width\/} (also known as the {\bf ball capacity\/})
\[
c_{\op{Gr}}(X,\omega)\in(0,\infty]
\]
is defined to be the supremum of $r>0$ such that there exists a symplectic embedding
\[
B^{2n}(r)\hookrightarrow (X,\omega).
\]

\begin{definition}
Let $Y$ be a closed three-manifold, and let $\lambda$ be a contact form on $Y$. A {\bf positive deformation\/} of $\lambda$ is a smooth one-parameter family of contact forms
\begin{equation}
\label{eqn:posdef}
\{\lambda_\tau = e^{f_\tau}\lambda\}|_{\tau\in[0,1]}
\end{equation}
with the following properties:
\begin{itemize}
\item
$f_0\equiv 0$, so that $\lambda_0=\lambda$.
\item $f_1\ge 0$, and $f_1$ does not vanish identically.
\end{itemize}
We define the {\bf support\/} of the positive deformation \eqref{eqn:posdef} by
\begin{equation}
\label{eqn:support}
\op{supp}(\{\lambda_\tau\}) = \overline{\{y\in Y \mid \exists \tau\in[0,1] : f_\tau(y) \neq 0\}} \subset Y.
\end{equation}
\end{definition}

Recall that the {\bf symplectization\/} of $(Y,\lambda)$ is the four-manifold $\R\times Y$ with the symplectic form
\[
\omega = d(e^s\lambda),
\]
where $s$ denotes the $\R$ coordinate. Given a nonnegative function $f:Y\to\R_{\ge 0}$,  define
\[
M_{f} = \left\{(s,y)\in\R\times Y \;\big|\; 0 < s < f(y)\right\},
\]
equipped with the restriction of the symplectic form $d(e^s\lambda)$. In other words, $M_f$ is the region between $Y$ and the graph of $f$.

\begin{definition}
\label{def:width}
\cite{altspec}
Given a positive deformation $\left\{\lambda_\tau=e^{f_\tau}\lambda\right\}_{\tau\in[0,1]}$ as above, define its {\bf width\/} by
\[
\op{width}\left(\left\{\lambda_\tau\right\}_{\tau\in[0,1]}\right) = c_{\op{Gr}}\left(M_{f_1}\right) \in (0,\infty).
\]
\end{definition}

\begin{remark}
The width might at first seem like an unexpected way to measure the size of a deformation of the contact form, but it is what will work in the proof of the quantitative closing lemma below. Although the width may be difficult to compute, one can bound it as follows. Since symplectic embeddings respect volume, one has an upper bound
\[
c_{\op{Gr}}(M_f)^2 \le \int_Y\left(e^{2f}-1\right)\lambda\wedge d\lambda.
\]
And when $f:Y\to\R_{\ge 0}$ is not identically zero, one can show that there exists a constant $C>0$ such that
\[
c_{\op{Gr}}\left(M_{\tau f}\right) \ge C \tau
\]
for $\tau>0$ sufficiently small.
\end{remark}

We are now ready for the central definition that enters into quantitative closing lemmas:

\begin{definition}
\label{def:CloseL}
Let $Y$ be a closed three-manifold, let $\lambda$ be a contact form on $Y$, and let $L > 0$. Define\footnote{This is a slight tweak of the definition of $\op{Close}^L$ in \cite{altspec}. We would recover the definition in \cite{altspec} if we did not take the closure in \eqref{eqn:support}. The tweak seems necessary for our proof of Proposition~\ref{prop:sgcb} below to work.}
\[
\op{Close}^L(Y,\lambda) \in [0,\infty]
\]
to be the infimum of $\delta>0$ such that if $\{\lambda_\tau\}_{\tau\in[0,1]}$ is a positive deformation of $\lambda$, and if $\op{width}(\{\lambda_\tau\}) \ge \delta$, then there exists $\tau\in[0,1]$ such that the contact form $\lambda_\tau$ has a Reeb orbit $\gamma$ intersecting $\op{supp}(\{\lambda_\tau\})$ with action $\mathcal{A}(\gamma)\le L$.
\end{definition}

The idea of the definition is that if $\op{Close}^L(Y,\lambda)$ is finite, then this number tells us how much we need to deform the contact form in order to guarantee the creation of a Reeb orbit of action at most $L$. On the other hand, if $\op{Close}^L(Y,\lambda) = \infty$ then we do not know much. We will see in Theorem~\ref{thm:qcl} below that if $L$ is sufficiently large, then $\op{Close}^L(Y,\lambda)$ is finite, in fact $O(L^{-1})$.

Some more remarks to clarify Definition~\ref{def:CloseL}:

\begin{remark}
It follows from the definition that $\op{Close}^L(Y,\lambda)$ is a nonincreasing function of $L$.
\end{remark}

\begin{remark}
If $L$ is less than the minimum action of a Reeb orbit, then $\op{Close}^L(Y,\lambda) = +\infty$. The reason is that given $\delta>0$, one can consider the positive deformation given by $\lambda_\tau = e^{a\tau}\lambda$ where $a>0$ is a constant. One can show that if $a$ is sufficiently large, then this positive deformation will have width greater than $\delta$. Now the Reeb orbits of $e^{a\tau}\lambda$ are the same as the Reeb orbits of $\lambda$, except that they are reparametrized so that the action gets multiplied by $e^{a\tau}$. Thus this deformation does not create any Reeb orbits of action $\le L$.
\end{remark}

\begin{remark}
\label{rem:besse}
It is also possible that $\op{Close}^L(Y,\lambda)=0$. This is equivalent to the statement that every point in $Y$ is on (the image of) a Reeb orbit of action $\le L$. (This happens for example on $\partial E(a,b)$ when $a/b$ is rational and $L$ is the least common multiple of $a$ and $b$. See \cite{cgm,ff} for some characterizations of such contact forms.) The direction $(\Leftarrow)$ is immediate from the definition. To prove the reverse direction, if $\op{Close}^L(Y,\lambda)=0$, then for any nonempty open set $\mathcal{U}\subset Y$, one can produce a Reeb orbit intersecting $\mathcal{U}$ of action $\le L$ by an arbitrarily small positive deformation of $\lambda$ supported in $\mathcal{U}$. It then follows from the compactness of $Y$ that the original contact form $\lambda$ has a Reeb orbit intersecting the closure $\overline{\mathcal{U}}$ with action $\le L$. Since $\mathcal{U}$ was arbitrary, it follows that Reeb orbits of action $\le L$ are dense in $Y$. By another compactness argument, every point in $Y$ is on a Reeb orbit of action $\le L$.
\end{remark}

\begin{remark}
If $\lambda$ has no Reeb orbit of action $\le L$ intersecting $\mathcal{U}$, and if $\{\lambda_\tau\}$ is a positive deformation of $\lambda$ supported in $\mathcal{U}$ with width greater than $\op{Close}^L(Y,\lambda)$, then for some $\tau\in[0,1]$, the contact form $\lambda_\tau$ has a Reeb orbit $\gamma$ intersecting $\mathcal{U}$ with action $\le L$. One might expect that $\gamma$ is a concatenation of Reeb trajectories of $\lambda_\tau$ in $\mathcal{U}$ from $\partial\mathcal{U}$ to itself, together with Reeb trajectories of $\lambda$ in $Y\setminus\mathcal{U}$ from $\partial\mathcal{U}$ to itself. This may appear to be a contradiction if there is no Reeb trajectory of $\lambda$ in $Y\setminus\mathcal{U}$ from $\partial\mathcal{U}$ to itself taking time $\le L$. The resolution of the paradox is that in this case, the Reeb orbit $\gamma$ for $\lambda_\tau$ is contained entirely within $\mathcal{U}$. The idea is that the large deformation ``stirs up'' the Reeb flow in $\mathcal{U}$ enough to create such an orbit.
\end{remark}

\begin{remark}
The definition of $\op{Close}^L$ also makes sense for higher dimensional contact manifolds. This and more general notions are explored in \cite{ct23}.
\end{remark}

We can now state a general quantitative closing lemma:

\begin{theorem}[\cite{altspec}, see proof in \S\ref{sec:qclproofs}]
\label{thm:qcl}
Let $Y$ be a closed three-manifold and let $\lambda$ be a contact form on $Y$. Then
\[
\limsup_{L\to\infty}\left(L\cdot \op{Close}^L(Y,\lambda)\right) \le \op{vol}(Y,\lambda).
\]
\end{theorem}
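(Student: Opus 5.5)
We will derive the quantitative closing lemma from the elementary spectral invariants $c_k(Y,\lambda)$ introduced later in these notes. We use three properties of these invariants, all of which we take as established.

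\textbf{Properties used.}
\begin{itemize}
\item \emph{Spectrality:} each $c_k(Y,\lambda)$ is the total action of a finite set of Reeb orbits (with multiplicities).
\item \emph{Monotonicity:} if $f\ge 0$, then $c_k(Y,e^f\lambda)\ge c_k(Y,\lambda)$.
\item \emph{Weyl law:} Theorem~\ref{thm:ckweyl} gives
\[
\lim_{k\to\infty}\frac{c_k(Y,\lambda)^2}{k}=2\,\op{vol}(Y,\lambda).
\]
\end{itemize}
In addition we need a \emph{width increment} property. If $\{\lambda_\tau=e^{f_\tau}\lambda\}$ is a positive deformation of width $\delta$, then
\[
c_{k+1}(Y,\lambda_1)\ge c_k(Y,\lambda)+\delta .
\]
This should follow from the definition of the invariants via cobordisms: the ball $B^4(\delta)$ embeds into $M_{f_1}$, and a holomorphic curve through a point in the image of the ball, which the $U$-map constraint provides, has $\omega$-area at least $\delta$ by the monotonicity lemma.

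\textbf{Step 1: the key inequality.} Suppose the deformation creates no Reeb orbit of action $\le L$ meeting $\op{supp}(\{\lambda_\tau\})$ for any $\tau$. We claim that then $c_k(Y,\lambda_\tau)$ is independent of $\tau$ for every $k$ with $c_k(Y,\lambda)\le L$, or at least for those with $c_k$ slightly below $L$. The reason is that $\tau\mapsto c_k(Y,\lambda_\tau)$ is continuous and takes values in the action spectrum of $\lambda_\tau$ restricted to orbit sets of action $\le L$. Orbits disjoint from the support are unaffected by the deformation, because $\lambda_\tau=\lambda$ near them. So this spectrum is the fixed set of actions of orbit sets of $\lambda$, which is closed and measure zero by Sard's theorem. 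A continuous function with values in such a set is constant. This is the step that uses the closure in the definition of the support: an orbit meeting only the boundary of the set where $f_\tau\neq 0$ must still be excluded.

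\textbf{Step 2: apply the width increment.} Combining Step 1 with the width increment gives
\[
c_k(Y,\lambda)\ge c_{k+1}(Y,\lambda_1)\ge c_k(Y,\lambda)+\delta ,
\]
which is a contradiction. This holds for any $k$ with $c_{k+1}(Y,\lambda_1)\le L$. A cleaner way to organize this is to choose $k$ with $c_k(Y,\lambda)\le L$ and to argue that the hypothesis forces
\[
\delta\le c_{k+1}(Y,\lambda)-c_k(Y,\lambda).
\]
Hence
\[
\op{Close}^L(Y,\lambda)\le \min\bigl\{\,c_{k+1}(Y,\lambda)-c_k(Y,\lambda)\;\big|\; c_{k+1}(Y,\lambda)\le L\,\bigr\}.
\]

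\textbf{Step 3: Weyl law bound on the gaps.} Let $K$ be the largest index with $c_K(Y,\lambda)\le L$. The minimum gap is at most the average gap:
\[
\min_{k<K}\bigl(c_{k+1}-c_k\bigr)\le \frac{c_K-c_0}{K}\le \frac{L}{K}.
\]
By the Weyl law, $K\sim L^2/(2\,\op{vol})$, so this bound alone gives $L\cdot\op{Close}^L\lesssim 2\,\op{vol}/L\to 0$. That is far better than claimed, which makes the normalization suspicious. The sharp constant should come out if we use only gaps in a window near $L$ rather than all gaps up to $L$. Since $c_k\approx\sqrt{2\,\op{vol}\,k}$, we have
\[
c_{k+1}-c_k\approx\sqrt{\op{vol}/(2k)}\approx \op{vol}/c_k\approx \op{vol}/L .
\]
This matches the claimed bound $\limsup_{L\to\infty} L\cdot\op{Close}^L\le\op{vol}(Y,\lambda)$. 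So the correct argument uses indices $k$ with $c_k$ close to $L$, where the gaps average $\op{vol}/L$.

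\textbf{Main obstacle.} The hard part is Step 1 together with the width increment: proving the increment inequality for elementary spectral invariants, and controlling the action bound so that only orbits of action $\le L$ are involved.
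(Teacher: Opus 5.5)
Your Steps 1 and 2 follow the paper's route: together they prove the spectral gap closing bound, Proposition~\ref{prop:sgcb}, $\op{Close}^L(Y,\lambda)\le\op{Gap}^L(Y,\lambda)$. Your width increment is the Width Bound property, which the paper gets from Lemma~\ref{lem:ckmonotonicity} with $l=1$ and $c_1^{\op{Alt}}(B^4(r))=r$. There are two slips in Step~2 to fix.
\begin{itemize}
\item The chain $c_k(Y,\lambda)\ge c_{k+1}(Y,\lambda_1)$ does not follow from Step~1. Step~1 gives $c_{k+1}(Y,\lambda_1)=c_{k+1}(Y,\lambda)$, so the contradiction is with $c_{k+1}(Y,\lambda)-c_k(Y,\lambda)<\delta$.
\item The index condition must be $c_{k+1}(Y,\lambda)\le L$, not $c_k(Y,\lambda)\le L$.
\end{itemize}
The worry that $c_{k+1}(Y,\lambda_\tau)$ might leave $[0,L]$ is handled in the paper by working with $L+\epsilon$ and then using compactness. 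Alternatively, assume $c_{k+1}(Y,\lambda)<L$ and run the connectedness argument on the component of $\{\tau\mid c_{k+1}(Y,\lambda_\tau)<L\}$ containing $0$; this costs nothing asymptotically.

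The genuine gap is Step~3, the only place where the constant $\op{vol}(Y,\lambda)$ is produced. First, your arithmetic is wrong. The global average gives $\op{Close}^L\lesssim L/K\approx 2\op{vol}(Y,\lambda)/L$, hence $L\cdot\op{Close}^L\lesssim 2\op{vol}(Y,\lambda)$. That is a factor of two \emph{weaker} than the theorem, not better. Nothing is off with the normalization; this is exactly why localization is needed.

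Second, the localization rests only on the heuristic $c_{k+1}-c_k\approx\op{vol}/c_k$, which the Weyl law cannot supply. The Weyl law fixes $c_k$ only up to $o(k^{1/2})$ and says nothing about individual gaps. What you need is the elementary lemma \eqref{eqn:elementarylemma}, which the paper cites; it follows by averaging over a window.

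Write $V=\op{vol}(Y,\lambda)$ and $N(t)=\#\{k\mid c_k(Y,\lambda)\le t\}$. The Weyl law gives $N(t)=t^2/(2V)+o(t^2)$. Fix $\eta\in(0,1)$ and set $k_1=N((1-\eta)L)$ and $k_2=N(L)-1$. Then:
\begin{itemize}
\item for $k_1\le k<k_2$ we have $c_{k+1}\le L$;
\item these gaps telescope to $c_{k_2}-c_{k_1}<\eta L$;
\item there are $k_2-k_1=(2\eta-\eta^2)L^2/(2V)+o(L^2)$ of them.
\end{itemize}
Hence
\[
L\cdot\op{Gap}^L(Y,\lambda)\le\frac{\eta L^2}{k_2-k_1}=\frac{2V}{2-\eta}+o(1).
\]
Letting $L\to\infty$ and then $\eta\to 0$ gives \eqref{eqn:limsupgap}. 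Your global average is the case $\eta=1$. With this supplied, your argument coincides with the paper's.
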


Here $\op{vol}(Y,\lambda)$ denotes the {\bf contact volume\/} defined by
\[
\op{vol}(Y,\lambda) = \int_Y\lambda\wedge d\lambda > 0,
\]
where the integral is computed using the orientation on $Y$ given by $\lambda\wedge d\lambda$.

We can compute $\op{Close}^L$ exactly for the example of the ellipsoid $\partial E(a,b)$ when $L$ is large enough. The answer is given in terms of approximations of $a/b$ by rational numbers. To state the result, let $a,b$ be positive real numbers and let $L\ge \max(a,b)$. Let $m_-,n_-$ be relatively prime integers with $m_->0$ such that $n_-/m_-$ is maximized subject to the constraints $n_-/m_- \le a/b$ and $am_- \le L$. Similarly, let $m_+,n_+$ be relatively prime integers with $n_+>0$ such that $m_+/n_+$ is maximized\footnote{With this definition of $m_+$ and $n_+$, we are making a slight correction to the statement of \cite[Thm.\ 1.10]{altspec}. The proof is unchanged.} subject to the constraints $m_+/n_+\le b/a$ and $bn_+\le L$.

\begin{theorem}[\cite{altspec}, see partial proof in \S\ref{sec:qclproofs}]
\label{thm:ecl}
If $a,b>0$ and $L\ge \max(a,b)$, then with the above notation we have
\[
\op{Close}^L(\partial E(a,b)) = \min(am_--bn_-, bn_+ - am_+).
\]
\end{theorem}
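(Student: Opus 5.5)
The equality splits into an upper bound $\op{Close}^L(\partial E(a,b))\le \min(am_--bn_-,\,bn_+-am_+)$, proved with spectral invariants, and a matching lower bound, proved by an explicit positive deformation. Write $\delta_-=am_--bn_-\ge 0$ and $\delta_+=bn_+-am_+\ge 0$.

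\textbf{Upper bound.} I would use the elementary spectral invariants $c_k$ introduced later in the notes. They have the following properties:
\begin{itemize}
\item monotonicity under positive deformations;
\item the ``spectrality'' property: each $c_k(\lambda_\tau)$ is the total action of some orbit set;
\item for a positive deformation $\{\lambda_\tau\}$, an increment bound $c_k(\lambda_1)-c_k(\lambda_0)\ge \op{width}(\{\lambda_\tau\})$ whenever the relevant index constraint lets one insert a ball. This comes from a cobordism/ball-packing argument on $M_{f_1}$.
\end{itemize}
For the ellipsoid, the invariants $c_k(\partial E(a,b))$ are the terms of the sequence $N(a,b)$, namely the sorted values $am+bn$ with $m,n\ge 0$. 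The increment bound is really $c_{k+1}(\lambda_1)\ge c_k(\lambda_0)+\op{width}$.

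The argument runs by contradiction. Suppose the width is at least $\delta$ and no $\lambda_\tau$ has an orbit of action $\le L$ meeting the support. Then every orbit set of action $\le L$ is made of orbits of $\lambda$ away from the support, which are unchanged; these are $\gamma_1^{m}\gamma_2^{n}$ with action $am+bn$. By spectrality and continuity in $\tau$, any $c_k$ below $L$ is then constant in $\tau$. This contradicts the forced increment unless the gap between two consecutive actions $am+bn\le L$ exceeds $\delta$. So I would choose $k$ to exploit the smallest gap. The combinatorial claim to prove is that among values $am+bn\le L$ (with $m,n\ge 0$), some consecutive pair in $N(a,b)$ differs by at most $\min(\delta_-,\delta_+)$. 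This comes from lattice geometry: moving from $(m,n)$ to $(m+m_-,n-n_-)$ changes the action by $\delta_-$, and the best-approximation properties of $n_-/m_-$ and $m_+/n_+$ make these the smallest positive differences available within action $\le L$.

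\textbf{Lower bound.} Here I would construct, for each $\delta<\min(\delta_\pm)$, a positive deformation of width near $\delta$ that creates no orbit of action $\le L$ through its support. The natural choice is a toric deformation: work in moment-map coordinates $(\pi|z_1|^2,\pi|z_2|^2)$ and bulge the boundary line of the moment triangle outward into a region containing a ball of capacity near $\delta$. For toric domains, the Reeb orbits on tori correspond to boundary points whose outward normal is rational $(m,n)$, with action given by the support function. I would then check that the bulge can be shaped so that every rational normal $(m,n)$ appearing has action $>L$; the Diophantine conditions defining $m_\pm,n_\pm$ are exactly what makes this possible. Finally, I would check via a toric ball-embedding criterion that $M_{f_1}$ contains a ball of capacity near $\min(\delta_\pm)$.

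I expect the main obstacle to be this lower-bound construction. It needs exact control of both the width, through the toric ball-packing criterion, and the action of every orbit in the intermediate family $\lambda_\tau$, through the continued-fraction geometry. This is presumably why the notes give only a partial proof.
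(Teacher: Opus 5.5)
Your proposal is correct and follows the paper's route. The upper bound is Proposition~\ref{prop:sgcb}, which you re-derive inline, combined with $c_k(\partial E(a,b))=N_k(a,b)$ and the fact that $bn_-\le am_-\le L$ and $am_+\le bn_+\le L$ are spectral values; the lower bound is a toric positive deformation supported away from $\gamma_1,\gamma_2$, whose construction the notes defer to the original paper. Two points to tighten: when $am_-$ or $bn_+$ equals $L$, your continuity argument needs the $\epsilon$-enlargement and compactness step from the proof of Proposition~\ref{prop:sgcb}, and your claim that these are the smallest available gaps is not needed for the upper bound.
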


\begin{remark}
If $a$ and $b$ are relatively prime positive integers and $L \ge ab$, then we can take $n_-=n_+=a$ and $m_-=m_+=b$, and we recover the fact that $\op{Close}^L(\partial E(a,b)) = 0$ by Remark~\ref{rem:besse}.
\end{remark}

\begin{remark}
The combination of Theorems~\ref{thm:qcl} and \ref{thm:ecl} implies that an irrational number $a/b$ has a sequence of rational approximations $n/m$ with error on the order of $1/m^2$, similarly to Dirichlet's approximation theorem. If $a/b$ has irrationality exponent greater than $2$, then $\liminf_{L\to\infty}L\cdot \op{Close}^L(\partial E(a,b))=0$.
\end{remark}

\subsection{Outline of the rest of these lecture notes}

In \S\ref{sec:spec}, we introduce some key properties of the ``elementary spectral invariants'' of contact three-manifolds and use these to prove Theorems~\ref{thm:twoorbits},  \ref{thm:scl}, \ref{thm:qcl}, and \ref{thm:ecl}. To prepare for the definition of the elementary spectral invariants, in \S\ref{sec:alt} we digress to review the ``alternative ECH capacities'' of symplectic four-manifolds. In \S\ref{sec:comp}, we review the ECH index and use this to compute some basic examples of alternative ECH capacities. Finally, in \S\ref{sec:defspec} we define the elementary spectral invariants and prove their basic properties.

\subsection{Additional topics}

The following are some related topics which are beyond the scope of these lecture notes.

\paragraph{Reeb orbits frequently intersecting a symplectic surface.} The closing lemmas in \S\ref{sec:scl} and \S\ref{sec:qcl} involve creating a Reeb orbit by a small perturbation of the contact form. Similar methods are used in \cite{calabi2} to show that under certain hypotheses, given a surface $\Sigma$ in a contact three-manifold $(Y,\lambda)$, there exist Reeb orbits $\gamma$ with an explicit lower bound on the intersection number $\gamma\cdot \Sigma$ divided by the action $\mathcal{A}(\gamma)$. These Reeb orbits exist without any perturbation of the contact form.

\paragraph{ECH spectral invariants.} Some of the theorems stated above, namely Theorems~\ref{thm:taubesweinstein}, \ref{thm:exactlytwo}, and \ref{thm:toi}, cannot (currently) be proved using the methods introduced here. The first of these theorems can be deduced from, and is a step towards, Taubes's isomorphism \cite{taubes} between Seiberg-Witten-Floer theory \cite{km} and embedded contact homology \cite{bn}. The other two theorems use the ECH spectral invariants from \cite{qech}, which contain more information than the elementary spectral invariants introduced here.

\paragraph{Area-preserving surface diffeomorphisms.} There is a similar story for area-preserving diffeomorphisms of a closed surface. In this context, a version of Irie's strong closing lemma was proved in \cite{danclosing}, and also in \cite{pfh4} with the help of \cite{ucyclic} by a different approach which also yields quantitative closing lemmas. Both approaches make use of spectral invariants in periodic Floer homology, an analogue of embedded contact homology for area-preserving surface diffeomorphisms \cite{pfh3}. There is also an ``elementary'' alternative to PFH spectral invariants \cite{edtmairelementary} which can recover some of the same applications. A version of Irie's generic equidistribution theorem for area-preserving surface diffeomorphisms was proved in \cite{prasad}, and generalized to surfaces with boundary in \cite{pp}.

\paragraph{Minimal hypersurfaces.} There is also a remarkable parallel story about minimal hypersurfaces in Riemannian manifolds. For a generic metric, it was shown in \cite{imn} that minimal hypersurfaces are dense, and in \cite{mns} that minimal hypersurfaces are equidistributed. The proofs use a ``Weyl law'' for the ``volume spectrum'' of Riemannian manifolds proved in \cite{lmn}, which is formally similar to the Weyl law for elementary spectral invariants of contact three-manifolds in Theorem~\ref{thm:ckweyl} below.

%%%%%%%%%%%%%%%%%%%%%%%%%%%%%%%%%

\section{Elementary spectral invariants: properties and applications}
\label{sec:spec}

%%%%%%%%%%%%%%%%%%%%%%%%%%%%%%%%%%

In \cite{qech}, embedded contact homology was used to define the ECH spectral invariants of contact three-manifolds, which can be used to prove the dynamical theorems stated in \S\ref{sec:intro}. The ``elementary spectral invariants'' of contact three-manifolds, defined in \cite{altspec}, are a simplified version of the ECH spectral invariants, which behave similarly to the original ECH spectral invariants, but which require much less technology to establish. In this section we introduce some essential properties of the elementary spectral invariants, deferring the construction of the invariants and the proofs of their properties to \S\ref{sec:defspec}. The key nontrivial property is a ``Weyl law'' (Theorem~\ref{thm:ckweyl}). We then explain how these invariants can be used to prove some of the theorems stated in \S\ref{sec:intro}, namely Irie's strong closing lemma (Theorem~\ref{thm:scl}), its refinement in the general quantitative closing lemma (Theorem~\ref{thm:qcl}), the quantitative closing lemma for an ellipsoid (Theorem~\ref{thm:ecl}), and the existence of two Reeb orbits (Theorem~\ref{thm:twoorbits}). The main technique used here is to investigate the change in the spectral invariants under suitable deformations of the contact form, and to then draw dynamical conclusions from this.

%%%%%%%%%%%%%%%%%%%%%%%%%%%%%%%%%%%%

\subsection{Properties of elementary spectral invariants}
\label{sec:propspec}

%%%%%%%%%%%%%%%%%%%%%%%%%%%%%%%%%%%

If $Y$ is a closed three-manifold and $\lambda$ is a contact form on $Y$, its elementary spectral invariants consist of real numbers
\[
c_k(Y,\lambda) \in [0,+\infty]
\]
for each nonnegative integer $k$. Roughly speaking, $c_k(Y,\lambda)$ is the max-min energy of holomorphic curves in $\R\times Y$ constrained to pass through $k$ points. We will give the precise definition in \S\ref{subsec:defspec}. For now, we state some basic properties of these invariants which follow from the definition and standard properties of holomorphic curves.

\begin{definition}
In a contact manifold $(Y,\lambda)$, an {\bf orbit set\/} is a finite set of pairs $\alpha=\{(\alpha_i,m_i)\}$, where the $\alpha_i$ are distinct simple Reeb orbits, and the $m_i$ are positive integers. The {\bf symplectic action\/} of the orbit set $\alpha$ is defined by
\[
\mathcal{A}(\alpha) = \sum_i m_i \mathcal{A}(\alpha_i).
\]
Note that $\mathcal{A}(\alpha)\ge 0$, with equality if and only if $\alpha=\emptyset$.
\end{definition}

\begin{proposition}[\cite{altspec}, see proof in \S\ref{sec:ckbasicproofs}]
\label{prop:ckproperties}
The elementary spectral invariants $c_k$ satisfy the following properties:
\begin{description}
\item{(Increasing)} We have
\[
0 = c_0(Y,\lambda) < c_1(Y,\lambda) \le c_2(Y,\lambda) \le \cdots \le +\infty.
\]
\item{(Conformality)} If $r>0$ is a constant then $c_k(Y,r\lambda) = r\cdot c_k(Y,\lambda)$.
\item{(Monotonicity)} If $f:Y\to\R_{\ge 0}$ is a smooth function, then
\[
c_k(Y,\lambda) \le c_k(Y,e^f\lambda).
\]
\item{($C^0$-Continuity)} For fixed $\lambda$ and $k$, the function $C^\infty(Y;\R)\to[0,\infty]$ sending $f\mapsto c_k(Y,e^f\lambda)$ is continuous with respect to the $C^0$ topology on $C^\infty(Y;\R)$.
\item{(Spectrality)} If $c_k(Y,\lambda)<\infty$, then there exists an orbit set $\alpha$ with $c_k(Y,\lambda) = \mathcal{A}(\alpha)$.
\item{(Disjoint Union)} If $(Y_i,\lambda_i)$ are closed contact three-manifolds for $i=1,\ldots,m$, then
\[
c_k\left(\coprod_{i=1}^m(Y_i,\lambda_i)\right) = \max_{k_1+\cdots+k_m=k}\sum_{i=1}^m c_{k_i}(Y_i,\lambda_i).
\]
\item{(Width Bound)} If $f:Y\to\R_{\ge 0}$ is a smooth function, then
\[
c_{k+1}(Y,e^{f}\lambda) \ge c_k(Y,\lambda) + c_{\op{Gr}}(M_f).
\]
\end{description}
\end{proposition}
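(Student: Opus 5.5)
The plan is to prove all seven properties directly from the definition of $c_k(Y,\lambda)$ that will be given in \S\ref{subsec:defspec}. Roughly, $c_k(Y,\lambda)$ will be a supremum, over generic $\lambda$-compatible almost complex structures $J$ on the symplectization (or on suitable completed cobordisms) and over $k$ generic points, of the infimum of the energy of $J$-holomorphic curves that pass through those points and are asymptotic at $+\infty$ to an orbit set. Most properties should follow formally from this max-min structure, once we have three standard facts: Gromov compactness, energy bounds, and the existence of holomorphic curves in symplectic cobordisms through generic point constraints. I would set these facts up first, and then verify the properties in the order listed.

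\textbf{(Increasing)} For $k=0$ the empty curve has zero energy, so $c_0=0$. Monotonicity in $k$ is clear, because adding a point constraint only shrinks the set of admissible curves, which raises the infimum. For $c_1>0$, a curve through a point has positive energy, and Gromov compactness bounds it below by the minimal action of a Reeb orbit.

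\textbf{(Conformality)} Rescale the $\R$-coordinate: $s\mapsto s+\log r$ identifies the symplectization of $r\lambda$ with that of $\lambda$, and the symplectic form scales by $r$.

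\textbf{(Spectrality)} Take a minimizing sequence of curves and pass to a Gromov (SFT) compactness limit. The energy of the limit equals the action of its positive asymptotic orbit set. Doing this for each $J$ and point configuration exhibits $c_k$ as a sup of values lying in the action spectrum. Since in dimension three the action spectrum is closed and has measure zero, the supremum is attained.

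\textbf{(Disjoint Union)} The points split as $k=\sum k_i$ among the components, and curves split as unions of curves in each component. The max-min then becomes a max over the splitting of a sum of max-mins.

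\textbf{(Monotonicity)} When $f\ge0$, the region between $Y$ and the graph of $f$ is an exact symplectic cobordism from $(Y,\lambda)$ to $(Y,e^f\lambda)$. A holomorphic curve certifying a lower bound for $c_k(Y,\lambda)$ should be stretched or glued through this cobordism. Alternatively, and better, $c_k$ can be defined from the outset via maps of cobordisms, so that the invariant is monotone by construction. In either version, Stokes' theorem gives that the energy at the top is at least the energy at the bottom, because $d(e^s\lambda)$ is nonnegative on holomorphic curves.

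\textbf{($C^0$-Continuity)} If $|f-g|\le\epsilon$, then $e^{g}\lambda\le e^{\epsilon}e^{f}\lambda$. Monotonicity followed by Conformality gives $c_k(e^g\lambda)\le e^{\epsilon}c_k(e^f\lambda)$, and the symmetric bound holds as well, which gives continuity. For the value $+\infty$, the same inequalities show finiteness is preserved, so continuity holds as a map into $[0,\infty]$.

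\textbf{(Width Bound)} This is the main obstacle. Choose a symplectic ball $B(r)\subset M_f$ with $r$ close to $c_{\op{Gr}}(M_f)$. Place one point constraint at the center of the ball and the other $k$ constraints below $Y$. A curve in the cobordism through all $k+1$ points then has energy at least $c_k(Y,\lambda)$ from the part below $Y$. Its intersection with the ball is a proper holomorphic curve through the center, and by the monotonicity lemma, for $J$ standard on the ball, this contributes area at least $r$. The delicate point is making the energy additivity honest. I would try a neck-stretching argument along $Y$: the limit building splits into a piece in the symplectization of $\lambda$ carrying the $k$ constraints, with action at least $c_k(Y,\lambda)$, and a piece in $M_f$ through the ball center. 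Positivity of $d\lambda$ on curves lets the $M_f$ piece add its area of at least $r$ on top of that action. Handling multiply covered or disconnected limits is where I expect care to be needed. That should be manageable because only total energy and positivity are used, not transversality of the limit. Letting $r\to c_{\op{Gr}}(M_f)$ then gives the Width Bound.
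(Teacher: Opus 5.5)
Your outline agrees with the paper's for Increasing, Disjoint Union and $C^0$-Continuity. The last is exactly the paper's Conformality-plus-Monotonicity sandwich. Your Width Bound is a reasonable variant. The paper takes $l=1$ in the neck-stretching Lemma~\ref{lem:ckmonotonicity} to get $c_{k+1}(Y,e^f\lambda)\ge c_k(Y,\lambda)+c_1^{\op{Alt}}(M_f)$. It then bounds $c_1^{\op{Alt}}(M_f)\ge c_{\op{Gr}}(M_f)$ by Monotonicity and the Ball property of $c^{\op{Alt}}$. You instead inline the $l=1$ case using the monotonicity lemma in an embedded ball, which is essentially the statement $c_1^{\op{Alt}}(B^4(r))\ge r$. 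The paper's packaging gives the inequality for all $l$, which it reuses for the Weyl law.

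\textbf{Main gap: compactness in the neck stretching.} The genuine gap is the step you call manageable. Curves in $\mathcal{M}^J(\R\times Y)$ carry no genus bound. A sequence of curves for the stretched almost complex structures with bounded positive energy therefore need not have an SFT limit at all. The problem is existence of the limit building, not its transversality, so ``only total energy and positivity are used'' does not address it. The neck stretching behind Lemma~\ref{lem:ckmonotonicity}, like that in Lemma~\ref{lem:ckaltmonotonicity}, controls the genus via the relative adjunction formula and the asymptotic writhe bound. This is a specifically four-dimensional input (cf.\ Remark~\ref{rem:ckalthd}). Your Monotonicity needs the same argument, since it is the case $l=0$. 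Redefining $c_k$ ``via cobordisms'' so that it is monotone by construction is not an option: the definition is fixed and the other properties are derived from it.

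You should also use the two features of the actual definition that make the limit argument close. First, $\widetilde{\mathcal{J}}$ allows an arbitrary compatible $J$ on a middle region, so you can put the given $J$ below $Y$ and a standard structure on the ball. Second, negative punctures are allowed, so the limit level through $x_1,\ldots,x_k$, which typically has negative ends, is an admissible competitor for $c_k(Y,\lambda)$.

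\textbf{Spectrality.} The paper defines $c_k$ directly only for nondegenerate forms. There $\mathcal{E}_+(u)$ is by definition the action of an orbit set and the action spectrum is discrete, so no compactness is needed. Likewise, $c_1>0$ only uses that a nonconstant curve has a positive puncture. For degenerate $\lambda$, Definition~\ref{def:ckdegen} makes $c_k(Y,\lambda)$ a supremum of numbers $c_k(Y,e^f\lambda)\in S(Y,e^f\lambda)$, not of elements of $S(Y,\lambda)$. So closedness of $S(Y,\lambda)$ does not apply, and SFT compactness is unavailable for degenerate forms anyway. You need a limiting argument:
\begin{itemize}
\item take nondegenerate $e^{f_n}\lambda$ with $f_n\to 0$ in $C^\infty$;
\item use $C^0$-Continuity to get $c_k(Y,e^{f_n}\lambda)\to c_k(Y,\lambda)$;
\item write these as actions of orbit sets of uniformly bounded action, hence with boundedly many orbits whose periods are bounded above and below;
\item extract a limiting orbit set of $\lambda$ by Arzel\`a--Ascoli.
\end{itemize}

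\textbf{Conformality.} The translation $s\mapsto s+\log r$ is a symplectomorphism between the symplectizations of $r\lambda$ and $\lambda$. However, it does not carry $\widetilde{\mathcal{J}}(Y,r\lambda)$ into $\widetilde{\mathcal{J}}(Y,\lambda)$, because on the ends $J\partial_s=R_{r\lambda}=R_\lambda/r$. As in the paper's proof for $c_k^{\op{Alt}}$, you must also reparametrize $s$ with slope $1/r$ on the cylindrical ends. This is harmless, since a cylindrical $J$ pushed forward by $s\mapsto h(s)$ with $h'>0$ stays compatible with $d(e^s\lambda)$. Then translate so that the points stay in $(-\infty,0]\times Y$. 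After this, curves correspond bijectively and positive energies scale by $r$.
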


\begin{remark}
We will see from the Weyl law in Theorem~\ref{thm:ckweyl} below that in fact $c_k(Y,\lambda)<\infty$ always holds. This is a nontrivial fact, and for general three-manifolds its proof currently requires Seiberg-Witten theory, although we will give an easier proof for the special case of star-shaped hypersurfaces in Lemma~\ref{lem:enhancedvariant} below. An independent proof that $c_1(Y,\lambda)<\infty$ would give a new proof of the Weinstein conjecture in three dimensions (see Theorem~\ref{thm:weinstein} below).
\end{remark}

\begin{remark}
One can define similar elementary spectral invariants for higher dimensional contact manifolds, which satisfy all the properties in Proposition~\ref{prop:ckproperties}; see \cite{ct23,irieicm} and Remark~\ref{rem:ckalthd} below. However for higher dimensional contact manifolds we do not know whether such invariants are always finite. (If yes, then this would imply the higher dimensional Weinstein conjecture.)
\end{remark}

Here is a simple example where we can compute the elementary spectral invariants. If $a,b>0$ are positive real numbers, define $(N_k(a,b))_{k\ge 0}$ to be the sequence of all linear combinations $am+bn$ with $m,n\in\Z_{\ge 0}$, listed in nondecreasing order with repetitions. For example,
\[
(N_k(2,3))_{k\ge 0} = (0,2,3,4,5,6,6,7,8,8,9,\ldots).
\]

\begin{proposition}[\cite{altspec}, see proof in \S\ref{sec:ckmoreproofs}]
\label{prop:ckellipsoid}
The elementary spectral invariants of the three-dimensional ellipsoid $\partial E(a,b)$ are given by
\[
c_k(\partial E(a,b)) = N_k(a,b).
\]
\end{proposition}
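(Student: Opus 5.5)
I will prove $c_k(\partial E(a,b)) = N_k(a,b)$ by establishing the two inequalities separately. The upper bound comes from the definition in \S\ref{subsec:defspec} together with an index/ECH computation. The lower bound comes from the abstract properties in Proposition~\ref{prop:ckproperties}, especially the Width Bound.

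For the upper bound $c_k \le N_k(a,b)$, I will first treat the case $a/b$ irrational. The ellipsoid is then nondegenerate, and its only simple Reeb orbits are $\gamma_1$ and $\gamma_2$, of actions $a$ and $b$. Orbit sets are therefore $\gamma_1^m\gamma_2^n$, with action $am+bn$. A standard computation with the ECH index (\S\ref{sec:comp}) shows that the orbit set with $I=2k$ is exactly the $k$-th one in action order, so its action is $N_k(a,b)$. The definition of $c_k$ is a max over almost complex structures $J$ of a min over curves through $k$ points. So it suffices to show the following: for every admissible $J$ and every $k$ points, there is a $J$-holomorphic curve through the points whose positive ends form an orbit set of action at most $N_k$.

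To produce such curves, I would use a neck-stretching or degeneration argument from the fact that ECH capacities of the four-dimensional ellipsoid $E(a,b)$ equal $N_k(a,b)$. Concretely, this means curves in the completed ellipsoid with $I=2k$ through $k$ points; I would transplant these to $\R\times Y$ via the relation between $c_k$ and the alternative ECH capacities in \S\ref{sec:alt}. This existence step, with $J$ arbitrary, is the main obstacle. I expect it to be handled by the same mechanism the paper uses to show $c_k<\infty$ for star-shaped hypersurfaces (the enhanced variant, Lemma~\ref{lem:enhancedvariant}), now with the action sharpened to $N_k$. The rational case then follows from the irrational case by $C^0$-Continuity, since $N_k(a,b)$ is continuous in $(a,b)$.

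For the lower bound $c_k \ge N_k(a,b)$, I plan to induct on $k$ using the Width Bound, Spectrality and Monotonicity. Take $a/b$ irrational. By Spectrality, each $c_k$ lies in the action spectrum $\{am+bn\}$. By Increasing, $c_k$ is nondecreasing with $c_0=0<c_1$. It remains to show that the values in the sequence $c_k$ cannot repeat more often than in $N_k$, and strict increases suffice for this because $a/b$ is irrational. I would obtain such increases as follows. Take $f=\epsilon$ constant; then $c_{k+1}(e^{\epsilon}\lambda) \ge c_k(\lambda)+c_{\op{Gr}}(M_\epsilon)>c_k(\lambda)$. Conformality lets $\epsilon\to0$, which gives $c_{k+1}\ge c_k$, but with only a weak inequality in the limit.

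A cleaner alternative for the lower bound is the Disjoint Union property. Fill the ellipsoid region with small balls and compare with the known capacities of balls, using that the $c_k$ of a disjoint union of boundaries of balls are computable. I would commit to this packing route, since the ECH lower bound for ellipsoids is classically proved the same way. Again the rational case follows by continuity.
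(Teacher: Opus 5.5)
Your upper bound matches the paper in outline. It is $c_k(\partial E(a,b))\le c_k^{\op{Alt}}(E(a,b))\le N_k(a,b)$, i.e.\ Lemma~\ref{lem:ckaltck} plus the upper half of Corollary~\ref{cor:ckaltellipsoid} (or $c_k^{\op{Alt}}\le c_k^{\op{ECH}}$). The ECH index computation you placed there plays no role in it.

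\textbf{The gap is the lower bound, and the packing route you commit to does not work.} Nothing in Proposition~\ref{prop:ckproperties} or elsewhere in the paper relates $c_k(\partial X)$ to $c_k(\coprod_i\partial B_i)$ when $\coprod_i B_i$ embeds in $X$. The contact invariants are monotone only under $\lambda\le e^f\lambda$ on a fixed $Y$. The one link to fillings, Lemma~\ref{lem:ckaltck}, says $c_k(Y)\le c_k^{\op{Alt}}(X)$, which points the wrong way: packing lower bounds for $c_k^{\op{Alt}}(E(a,b))$ say nothing about $c_k(\partial E(a,b))$. Moreover, the values $c_k(\partial B^4(r))$ are Corollary~\ref{cor:ckball}, which is the case $a=b$ of the statement itself, so you cannot treat them as known. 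The only packing-type lower bound available is $c_k(Y,\lambda)\ge c_k^{\op{Alt}}(M_{-R_0,0})$ from Lemma~\ref{lem:ckmonotonicity}, as in Lemma~\ref{lem:weyllb}. Getting exactly $N_k(a,b)$ from it would require a sharp ball packing of the shell $\op{int}E(a,b)\setminus E(e^{-R_0}a,e^{-R_0}b)$. You do not supply one, and the paper uses this bound only asymptotically.

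\textbf{The missing idea is to use the ECH index on the lower-bound side.} For $a/b$ irrational, Lemma~\ref{lem:enhancedvariant}(b) gives orbit sets with $c_k=\mathcal{A}(\alpha_+)$ and $I(\alpha_+)-I(\alpha_-)\ge 2k$. By \eqref{eqn:ellipsoidI}, every orbit set in $\partial E(a,b)$ has even, nonnegative ECH index. Hence $I(\alpha_+)=2j$ with $j\ge k$, and Lemma~\ref{lem:ellipsoidIA} gives $c_k=N_j\ge N_k$. The paper routes this argument through Theorem~\ref{thm:ckconvex}.

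Alternatively, your first idea can be completed, though not via the Width Bound with constant $f$. Use instead the argument from the proof of Theorem~\ref{thm:twoorbits}. If $c_k=c_{k+1}$, then $\op{Gap}^L=0$ for $L=c_{k+1}$. Proposition~\ref{prop:sgcb} and Remark~\ref{rem:besse} would then put every point of $\partial E(a,b)$ on a Reeb orbit, contradicting that there are only two simple orbits when $a/b$ is irrational. Strict increase, together with Spectrality and $N_0<N_1<\cdots$, then gives $c_k\ge N_k$, as you anticipated.
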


Specializing to the case $a=b$, we get:

\begin{corollary}
\label{cor:ckball}
The elementary spectral invariants of the three-sphere $\partial B^4(a)$ are given by
\[
c_k(\partial B^4(a)) = da,
\]
where $d$ is the unique nonnegative integer such that
\begin{equation}
\label{eqn:uniqueinteger}
d^2+d \le 2k \le d^2+3d.
\end{equation}
\end{corollary}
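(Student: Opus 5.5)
This follows directly from Proposition~\ref{prop:ckellipsoid} with $a=b$, since $\partial B^4(a)=\partial E(a,a)$. It gives $c_k(\partial B^4(a)) = N_k(a,a)$. So we only need to describe the sequence $(N_k(a,a))_{k\ge 0}$ explicitly.

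The sequence $N_k(a,a)$ lists the numbers $a(m+n)$ with $m,n\in\Z_{\ge0}$ in nondecreasing order, with repetitions. For a fixed nonnegative integer $d$, the value $da$ arises from the pairs $(m,n)$ with $m+n=d$, and there are exactly $d+1$ of them. Since $a>0$, the values $da$ increase strictly with $d$. Hence the value $da$ occupies a consecutive block of indices. The number of terms strictly smaller than $da$ is
\[
\sum_{j=0}^{d-1}(j+1) = \frac{d(d+1)}{2},
\]
so the block for $da$ runs over the indices $k$ with
\[
\frac{d(d+1)}{2} \le k \le \frac{d(d+1)}{2} + d = \frac{d^2+3d}{2}.
\]
Multiplying by $2$ gives exactly \eqref{eqn:uniqueinteger}.

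These blocks partition $\Z_{\ge 0}$. Indeed, the upper endpoint for $d$ is $\frac{d^2+3d}{2}$, and the next lower endpoint is
\[
\frac{(d+1)(d+2)}{2} = \frac{d^2+3d}{2}+1,
\]
and the first block starts at $k=0$. So each $k$ determines a unique $d$, which settles the uniqueness claim.

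All the real content is in Proposition~\ref{prop:ckellipsoid}. The only step needing care is this counting, in particular getting the index endpoints right given that the sequence is indexed from $k=0$.
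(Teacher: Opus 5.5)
Your proposal is correct and follows the paper's approach: the paper obtains this corollary by specializing Proposition~\ref{prop:ckellipsoid} to $a=b$ (using $\partial B^4(a)=\partial E(a,a)$). Your counting argument spells out what the paper leaves implicit, namely that the index block of the value $da$ in $(N_k(a,a))_{k\ge0}$ is exactly $d^2+d\le 2k\le d^2+3d$ and that these blocks partition $\Z_{\ge 0}$.
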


More generally, there are combinatorial formulas for the elementary spectral invariants of the boundaries in $\R^4$ of ``convex toric domains'' (see Theorem~\ref{thm:ckconvex} below) and ``concave toric domains'' (see \cite[Rem.\ 1.16]{altspec}).

The key nontrivial property of the elementary spectral invariants $c_k$, which is essential to dynamical applications, is the following ``Weyl law'':

\begin{theorem}
[\cite{altspec}]
\label{thm:ckweyl}
If $Y$ is a closed three-manifold and $\lambda$ is a contact form on $Y$, then
\begin{equation}
\label{eqn:ckweyl}
\boxed{
\lim_{k\to\infty} \frac{c_k(Y,\lambda)^2}{k} = 2\op{vol}(Y,\lambda).
}
\end{equation}
\end{theorem}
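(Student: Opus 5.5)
The plan is to prove the two inequalities
\[
\liminf_{k\to\infty}\frac{c_k(Y,\lambda)^2}{k}\ge 2\op{vol}(Y,\lambda),\qquad
\limsup_{k\to\infty}\frac{c_k(Y,\lambda)^2}{k}\le 2\op{vol}(Y,\lambda)
\]
separately. The lower bound should follow from the properties in Proposition~\ref{prop:ckproperties} together with the ball computation in Corollary~\ref{cor:ckball}. The upper bound is where the real content lies, and there I would compare with the ECH spectral invariants of \cite{qech} and their Weyl law, which rests on Taubes's Seiberg--Witten theory.

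\textbf{Lower bound by ball packing.} Fix $\epsilon>0$. In the symplectization I would take a thin collar $M_{f}$ with $f$ a small positive constant $\delta$, and pack into it, up to volume error $\epsilon$, finitely many disjoint symplectic balls $B^4(r_1),\ldots,B^4(r_m)$. Darboux charts and the fact that balls fill any symplectic region arbitrarily well in volume should make this possible, giving $\sum_i r_i^2/2\approx \op{vol}(M_f)=\tfrac12(e^{2\delta}-1)\op{vol}(Y,\lambda)$. I then need the analogue of the Width Bound for several balls: the increment $c_{k+\sum k_i}(Y,e^f\lambda)-c_k(Y,\lambda)$ should be bounded below by $\max\sum_i c_{k_i}(\partial B^4(r_i))$. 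The proof would go exactly as for the Width Bound, by neck-stretching holomorphic curves around the balls and applying the Disjoint Union property.

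Since $\lambda$ and $e^{\delta}\lambda$ differ by a scale, Conformality lets me iterate this over the collars $e^{j\delta}\lambda$ for $j\ge0$. Corollary~\ref{cor:ckball} gives $c_k(\partial B^4(r))^2/k\to 2r^2$, and distributing $k$ optimally among the balls yields growth like $\sqrt{2k\sum r_i^2}$. Telescoping over the collars, I expect to obtain $\liminf c_k^2/k\ge 2\op{vol}-O(\epsilon)$. The main technical point in this step is making the multi-ball Width Bound precise. Alternatively, I could reduce it to the known ECH ball-packing argument if the definition in \S\ref{subsec:defspec} permits.

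\textbf{Upper bound, the main obstacle.} I would bound $c_k(Y,\lambda)\le c_\sigma^{\op{ECH}}(Y,\lambda)$ for a suitable ECH class $\sigma$ of grading roughly $2k$, with $U^k\sigma\neq0$. The reason is that a nonzero $U$-sequence supplies, through the ECH cobordism maps, holomorphic currents passing through $k$ generic points. Their energy bounds any max-min energy defining $c_k$, since every admissible almost complex structure must see such curves. I would then invoke the ECH Weyl law of Cristofaro-Gardiner--Hutchings--Ramos, which is proved via Seiberg--Witten Floer theory and Taubes's isomorphism. The hardest parts of this step are establishing the existence of the curves, which requires Taubes's results for general $Y$, and matching the gradings.

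For star-shaped $Y$ one can avoid this machinery. I would sandwich $Y$ between a fine packing by ellipsoids and a slightly larger scaled domain, and use Monotonicity together with an explicit upper bound for domains in $\R^4$ from Lemma~\ref{lem:enhancedvariant}. Combined with Proposition~\ref{prop:ckellipsoid}, which gives $N_k(a,b)^2/k\to 2ab$ and matches the volume normalization, this yields the upper bound in that case.
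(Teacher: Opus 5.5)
For the general statement your route is essentially the paper's. The lower bound comes from packing balls into a collar of the symplectization and applying the monotonicity lemma (Lemma~\ref{lem:weyllb}). The upper bound comes from comparing $c_k$ with ECH spectral invariants of a $U$-sequence (\cite[Thm.\ 6.1]{altspec}) and invoking the ECH Weyl law of \cite{vc}; the paper also only cites this step.

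The lower bound can be shortened in two ways:
\begin{enumerate}
\item Your ``multi-ball Width Bound'' needs no new neck-stretching. It is exactly Lemma~\ref{lem:ckmonotonicity} combined with the Monotonicity, Disjoint Union and Ball properties of $c_k^{\op{Alt}}$. This is also how the paper derives the one-ball Width Bound.
\item That lemma holds for arbitrary $f_1<f_2$, so you can use a single thick collar. Using $M_{-R_0,0}$ with $c_0=0$ and Proposition~\ref{prop:ckaltlowerweyl} gives $\liminf_k c_k^2/k\ge 2(1-e^{-2R_0})\op{vol}(Y,\lambda)$ at once. Your telescoping over $N$ thin collars plus Conformality reproduces exactly this bound with $R_0=N\delta$.
\end{enumerate}

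Your star-shaped shortcut, however, does not work as described.
\begin{itemize}
\item An inner packing of $X$ by ellipsoids can only bound $c_k(Y)$ from below, never from above.
\item Comparing with a scaled-up copy of $X$ is circular, since by Conformality its invariants are a constant multiple of those of $Y$.
\item Lemma~\ref{lem:enhancedvariant}(a) as stated only gives $c_k(Y)\le c_k^{\op{Alt}}(B^4(r))$. That bound grows like $2\sqrt{k\op{vol}(B^4(r))}$, not $2\sqrt{k\op{vol}(X)}$.
\end{itemize}

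The missing idea is to pack the complement rather than $X$ itself. Lemma~\ref{lem:ckmonotonicity} gives
$c_k(Y)+c_l^{\op{Alt}}(B^4(r)\setminus X)\le c_{k+l}(\partial B^4(r))$.
Choose $l\approx\bigl(\op{vol}(B^4(r))/\op{vol}(X)-1\bigr)k$ and apply the ball-packing lower bound of Proposition~\ref{prop:ckaltlowerweyl} to $c_l^{\op{Alt}}(B^4(r)\setminus X)$. As in \eqref{eqn:klcalculation}, this yields $c_k(Y)\le 2\sqrt{k\op{vol}(X)}+o(k^{1/2})$.

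Equivalently, you can use the intermediate inequality $c_k(Y)\le c_k^{\op{Alt}}(X)$ from Lemma~\ref{lem:ckaltck} together with Theorem~\ref{thm:ckaltweyl}. The upper half of that theorem is proved by the same complement trick, not by ellipsoid packings of $X$.
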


\begin{remark}
In \S\ref{sec:ssweyl} we will prove the Weyl law in the special case when $Y$ is a star-shaped hypersurface, using basic properties of elementary spectral invariants. For a general three-manifold $Y$, the proof of the Weyl law is beyond the scope of these lecture notes. The general proof uses a basic relation between elementary spectral invariants and ECH spectral invariants proved in \cite[Thm.\ 6.1]{altspec}, together with a Weyl law for ECH spectral invariants proved in \cite{vc}. The proof of the latter Weyl law uses Taubes's isomorphism \cite{taubes} between embedded contact homology \cite{bn} and Seiberg-Witten Floer homology, together with properties of the latter proved by Kronheimer-Mrowka \cite{km}.

See \cite{cgweyl} for a survey of some related Weyl laws and their applications.
\end{remark}

\begin{example}
\label{ex:ellipsoidweyl}
Let us verify the Weyl law for the example of the ellipsoid $Y = \partial E(a,b)$. Recall from Proposition~\ref{prop:ckellipsoid} that $c_k(\partial E(a,b))=N_k(a,b)$. Now $N_k(a,b)$ has the following useful description in terms of lattice point geometry. Given $L > 0$, let $T(L)$ denote the triangle in the first quadrant of the plane whose edges are segments of the $x$ axis, the $y$ axis, and the line $ax+by=L$. Then for $k>0$, the number $N_k(a,b)$ is the minimum of $L>0$ such that the triangle $T(L)$ encloses at least $k+1$ lattice points (including lattice points on the boundary). This triangle has area
\[
\op{Area}(T(L)) = \frac{L^2}{2ab}.
\]
And for $k$ large, we have
\[
\op{Area}(T(L)) = k + O(k^{1/2}).
\]
Thus
\[
\lim_{k\to\infty}\frac{c_k(\partial E(a,b))^2}{k} = \lim_{L\to \infty}\frac{L^2}{\op{Area}(T(L))} = 2ab.
\]
This confirms the Weyl law since\footnote{To do this volume calculation, note that by Stokes' theorem, the contact volume is given by $\op{vol}(\partial E(a,b)) = 2\op{vol}(E(a,b))$, where on the right hand side we use the Euclidean volume. And for any domain $\Omega\subset \R^2_{\ge 0}$, we have $\op{vol}\left(\left\{z\in\C^2\mid (\pi|z_1|^2,\pi|z_2|^2)\in\Omega\right\}\right) = \op{Area}(\Omega)$.} $\op{vol}(\partial E(a,b)) = ab$.
\end{example}

As a first illustration of the power of the Weyl law, we can now prove:

\begin{theorem}
[three dimensional Weinstein conjecture]
\label{thm:weinstein}
Let $Y$ be a closed three-manifold and let $\lambda$ be a contact form on $Y$. Then $\lambda$ has a Reeb orbit.
\end{theorem}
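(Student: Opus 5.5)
The proof will combine the Weyl law (Theorem~\ref{thm:ckweyl}) with the Spectrality and Increasing properties from Proposition~\ref{prop:ckproperties}. The only nontrivial input is that $c_1(Y,\lambda)$ is finite; the Weyl law supplies this.

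First I show that $c_k(Y,\lambda)<\infty$ for every $k$. Since $\op{vol}(Y,\lambda)$ is a finite positive number, the limit in \eqref{eqn:ckweyl} is finite. In particular $c_k(Y,\lambda)^2/k$ is finite for all sufficiently large $k$, so $c_k(Y,\lambda)<\infty$ for some large $k$. By the (Increasing) property, $c_1(Y,\lambda)\le c_k(Y,\lambda)$, and hence $c_1(Y,\lambda)<\infty$. Reading the Weyl law in this way implicitly requires that each $c_k$ is finite, and that is exactly the content of the theorem as stated. So at worst one gets finiteness for all large $k$, which is enough.

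Next I apply (Spectrality) with $k=1$. Since $c_1(Y,\lambda)<\infty$, there is an orbit set $\alpha=\{(\alpha_i,m_i)\}$ with $\mathcal{A}(\alpha)=c_1(Y,\lambda)$. The (Increasing) property also gives $c_1(Y,\lambda)>c_0(Y,\lambda)=0$, so $\mathcal{A}(\alpha)>0$. An orbit set has action zero if and only if it is empty, so $\alpha\neq\emptyset$. Therefore $\alpha$ contains at least one simple Reeb orbit $\alpha_i$, which is the desired Reeb orbit.

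I do not expect any step of this argument to be a real obstacle, since all the difficulty lies in the results being assumed. The finiteness of $c_1$ comes from the Weyl law, which in general rests on Taubes's isomorphism, and the strict inequality $c_0<c_1$ comes from the definition of the invariants via holomorphic curves through a point. The one point to be careful about is that finiteness must be derived from the Weyl law rather than assumed. In particular, the strict positivity $c_1>0$ is what rules out the empty orbit set.
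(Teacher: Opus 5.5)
Your proof is correct and is essentially the paper's argument: the Weyl law together with the Increasing property gives $0<c_1(Y,\lambda)<\infty$, and Spectrality then produces an orbit set of positive action, which must therefore be nonempty. Your aside about the Weyl law ``implicitly requiring'' finiteness of each $c_k$ is unnecessary: a finite limit already forces $c_k(Y,\lambda)<\infty$ for large $k$, and monotonicity then gives $c_1(Y,\lambda)<\infty$.
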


\begin{proof}
By the Increasing property in Proposition~\ref{prop:ckproperties} and the Weyl law in Theorem~\ref{thm:ckweyl}, we have $0 < c_1(Y,\lambda)<\infty$. By the Spectrality property in Proposition~\ref{prop:ckproperties}, there exists an orbit set $\alpha$ with $\mathcal{A}(\alpha)= c_1(Y,\lambda)$. Since $\mathcal{A}(\alpha) > 0$, it follows that $\alpha$ is not the empty set.
\end{proof}

%%%%%%%%%%%%%%%%%%%%%%%%%%%%%%%%%%%%%%%%%%

\subsection{Proof of Irie's strong closing lemma}
\label{sec:irietrick}

%%%%%%%%%%%%%%%%%%%%%%%%%%%%%%%%%%%%%%%%%%

We now use the elementary spectral invariants to prove Irie's strong closing lemma (Theorem~\ref{thm:scl}), following\footnote{The proof in \cite{irieclosing} used the original ECH spectral invariants, but the proof here using the elementary spectral invariants goes by the same argument.} \cite{irieclosing}. 

\begin{definition}
If $(Y,\lambda)$ is a closed contact manifold, the {\bf action spectrum\/}\footnote{In other contexts in the literature, the term ``action spectrum'' more commonly refers to the set of actions of individual orbits, not orbit sets.} of $(Y,\lambda)$ is the set
\[
S(Y,\lambda) = \{\mathcal{A}(\alpha) \mid \mbox{$\alpha$ is an orbit set in $(Y,\lambda)$}\} \subset \R_{\ge 0}.
\]
\end{definition}

We will need the following lemma:

\begin{lemma}
\label{lem:measurezero}
(\cite[Lem.\ 2.2]{irieclosing}, cf. \cite[Lem.\ 3.8]{schwarz})
Let $(Y,\lambda)$ be a closed contact manifold. Then the action spectrum $S(Y,\lambda)$ has measure zero in $\R$.
\end{lemma}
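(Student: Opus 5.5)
The plan is to reduce the statement to Sard's theorem by realizing the set of periods of individual Reeb orbits as the set of critical values of a smooth function. The action spectrum $S(Y,\lambda)$ consists of finite sums $\sum_i m_i T_i$ with $m_i$ positive integers and each $T_i$ the action of a Reeb orbit. Let $\mathcal{P}\subset\R_{>0}$ be the set of periods of (not necessarily simple) Reeb orbits, and set $\mathcal{P}_0=\mathcal{P}\cup\{0\}$. Then $S(Y,\lambda)$ is contained in the countable union over $N\ge 1$ of the images of the sum maps $\mathcal{P}_0^N\to\R$. So it suffices to prove that $\mathcal{P}$ has measure zero, and then that finite sums of elements of a measure zero set still form a measure zero set.

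For the first step, consider the loop space $\Lambda = C^\infty(\R/\Z, Y)$ and the functional on $\Lambda\times\R_{>0}$ given by
\[
\mathcal{F}(x,T)=\int_{\R/\Z}x^*\lambda - T\left(\int_{\R/\Z}\tfrac12\left(\lambda(\dot x)/T\right)^2\,dt\right)
\]
or some similar functional whose critical points are exactly the pairs with $x(t)=\gamma(Tt)$ for a Reeb orbit $\gamma$ of period $T$, and whose critical values are the periods. Sard's theorem is not directly available in infinite dimensions, so I would instead follow the finite-dimensional trick of \cite{schwarz}. Fix $T_0>0$ and cover the orbits of period in $[T_0,2T_0]$ by finitely many flow boxes. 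Then reduce via a broken-trajectory (``Poincaré section'') construction: a periodic orbit of period near $T$ corresponds to a critical point of a smooth function $F$ defined on a finite-dimensional manifold, namely a product of finitely many local transverse sections together with the return times. Concretely, the orbit is determined by its successive hits $p_1,\dots,p_N$ on sections, and $F(p_1,\dots,p_N)$ equals the sum of the local actions $\int\lambda$ along short paths joining the hits. Each short path is a flow segment followed by a short segment in a Legendrian-type direction chosen by a generating-function normal form. The critical points of $F$ are exactly the closed orbits, and $F$ equals the period there, because $\lambda(R)=1$ and $d\lambda$ vanishes in the Reeb direction. Since $F$ is smooth (or at least $C^{\dim+1}$), Sard's theorem shows that its critical values have measure zero. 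Countably many such charts cover all periods, so $\mathcal{P}$ has measure zero.

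The second step needs more than measure zero, because sums of null sets can have positive measure (for instance, Cantor set plus Cantor set). I would avoid this by running the same Sard argument directly on orbit sets. An orbit set with $N$ components, counted with multiplicity, is a critical point of the function $F_1\oplus\cdots\oplus F_N$ on a product of finite-dimensional charts, and the corresponding critical value is $\sum_i T_i$. Critical points of a sum of functions in separate variables are tuples of critical points. Hence Sard, applied to these finitely many functions on product manifolds (countably many choices of $N$ and charts), shows that $S(Y,\lambda)$ has measure zero directly. The hardest step is constructing the finite-dimensional smooth function whose critical points and critical values are exactly the closed orbits and their periods. I would first try the generating-function approach via Darboux charts near segments of Reeb trajectories, as in \cite{schwarz}.
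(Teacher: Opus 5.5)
Your architecture is right, and it is the standard Sard-type argument behind the cited lemma; the paper itself gives no proof. You produce countably many smooth functions on finite-dimensional manifolds whose critical values contain every period. You then treat orbit sets by applying Sard to $F_{j_1}(z_1)+\cdots+F_{j_N}(z_N)$ on product charts, whose critical points are exactly tuples of critical points. You also correctly saw that measure zero of the set of periods alone would not suffice.

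The gap is the first step: it stays a plan, and its sketch fails as written. Write $\tau_i$ for the time from $p_i$ to the next section and $\psi_i$ for the transition map, so $d\tau_i=\psi_i^*\lambda-\lambda$.

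\begin{itemize}
\item If the closing segments are Legendrian, then $\int\lambda$ vanishes on them and $F(p_1,\dots,p_N)=\sum_i\tau_i(p_i)$ splits into functions of separate variables. Its critical points are where each $d\tau_i$ vanishes, which has nothing to do with $\psi_i(p_i)=p_{i+1}$.
\item Legendrian closing paths $\sigma(a,b)$ that depend smoothly on the endpoints and are constant for $a=b$ cannot exist. Differentiating $\int_{\sigma(a,b)}\lambda\equiv 0$ at $a=b$ would give $\lambda_a(\delta b)-\lambda_a(\delta a)=0$ for all variations, forcing $\lambda_a=0$.
\item The claim that the critical points of $F$ are \emph{exactly} the closed orbits is unjustified and, more to the point, unnecessary.
\end{itemize}

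The missing idea is that you only need the inclusion ``closed orbits $\subset$ critical points, with critical value the period''. This holds for any smooth finite-dimensional family of loops. By the first variation formula $\frac{d}{d\epsilon}\int_{c_\epsilon}\lambda=\int d\lambda(\partial_\epsilon c,\dot c)$ for closed loops, a Reeb orbit is a critical point of $c\mapsto\int_c\lambda$, hence of its restriction to any such family. So no sections or generating functions are needed.

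Given $\phi_{T_0}(x_0)=x_0$, take a convex coordinate chart around $x_0$ and define, for $(T,x)$ near $(T_0,x_0)$,
\[
F(T,x)=T+\int_{[\phi_T(x),x]}\lambda,
\]
where $[\cdot,\cdot]$ is the straight segment in the chart. This $F$ is smooth. At any $(T,x)$ with $\phi_T(x)=x$ the segment is constant, so $F=T$. Using $\phi_T^*\lambda=\lambda$ and $\lambda(R)=1$,
\[
dF=dT+\lambda\bigl(dx-D\phi_T(dx)-R\,dT\bigr)=dT+\lambda(dx)-(\phi_T^*\lambda)(dx)-dT=0 .
\]
Each set $\{(T,x)\in[1/m,m]\times Y:\phi_T(x)=x\}$ is compact, so countably many such $F$ account for all periods. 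Your product-and-Sard step then finishes the proof. Your broken-trajectory version also works once the Legendrian segments are replaced by straight segments in charts and ``exactly'' by ``contains''.
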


\begin{proof}[Proof of Theorem~\ref{thm:scl}]
Let $Y$ be a closed three-manifold, let $\lambda$ be a contact form on $Y$, and let $f:Y\to\R_{\ge 0}$ be a smooth function, not identically zero. We need to show that there exists $\tau\in[0,1]$ such that the contact form $e^{\tau f}\lambda$ has a Reeb orbit intersecting $\op{supp}(f)$.

Suppose to get a contradiction that no such $\tau$ exists. Then for all $\tau\in[0,1]$, the Reeb orbits of $e^{\tau f}\lambda$ map to $Y\setminus\op{supp}(f)$, where $e^{\tau f}\lambda$ agrees with $\lambda$. Thus for all $\tau\in[0,1]$, the Reeb orbits of $e^{\tau f}\lambda$ and their symplectic actions are the same as those of $\lambda$. Consequently, the action spectrum
\begin{equation}
\label{eqn:sas}
S\left(Y,e^{\tau f}\lambda\right) = S(Y,\lambda)
\end{equation}
for all $\tau \in [0,1]$.

For fixed $k\in\Z_{\ge 0}$, consider the function
\[
\begin{split}
[0,1] &\longrightarrow [0,\infty],\\
\tau & \longmapsto c_k\left(Y,e^{\tau f}\lambda\right).
\end{split}
\]
By the Increasing property in Proposition~\ref{prop:ckproperties} and the Weyl law in Theorem~\ref{thm:ckweyl}, this function takes values in $[0,\infty)$. By the Spectrality property in Proposition~\ref{prop:ckproperties} and equation \eqref{eqn:sas}, this function takes values in the set $S(Y,\lambda)$, which has measure zero by Lemma~\ref{lem:measurezero}. By the $C^0$-Continuity property in Proposition~\ref{prop:ckproperties}, this function is continuous. Since it takes values in a measure zero set, it must be constant. We conclude that
\begin{equation}
\label{eqn:ckf}
c_k(Y,\lambda) = c_k\left(Y,e^f\lambda\right)
\end{equation}
for all $k$.

It follows from \eqref{eqn:ckf} and the Weyl law applied to both $\lambda$ and $e^f\lambda$ that
\[
\op{vol}(Y,\lambda) = \op{vol}\left(Y,e^f\lambda\right).
\]
On the other hand, we compute that
\[
\op{vol}\left(Y,e^f\lambda\right) = \int_Y e^{2f}\lambda\wedge d\lambda > \op{vol}(Y,\lambda)
\]
since $f:Y\to \R_{\ge 0}$ is not identically zero. This gives the desired contradiction.
\end{proof}

%%%%%%%%%%%%%%%%%%%%%%%%%%%%%%%%%%%%%%%%%%%

\subsection{The spectral gap closing bound}

%%%%%%%%%%%%%%%%%%%%%%%%%%%%%%%%%%%%%%%%%

We now present a modification of Irie's argument from \S\ref{sec:irietrick} which allows one to bound $\op{Close}^L(Y,\lambda)$ and thus prove quantitative closing lemmas. The argument below is an adaptation of an argument in \cite{pfh4} for area-preserving surface diffeomorphisms.

\begin{definition}
Let $Y$ be a closed three-manifold, let $\lambda$ be a contact form on $Y$, and let $L>0$. Define the {\bf minimum spectral gap\/}
\[
\op{Gap}^L(Y,\lambda) = \inf\{c_{k+1}(Y,\lambda) - c_{k}(Y,\lambda) \mid  c_{k+1}(Y,\lambda) \le L\} \in [0,+\infty].
\]
\end{definition}

\begin{proposition}
\label{prop:sgcb}
\cite{altspec}
Let $Y$ be a closed three-manifold, let $\lambda$ be a contact form on $Y$, and let $L>0$. Then
\[
\op{Close}^L(Y,\lambda) \le \op{Gap}^L(Y,\lambda).
\]
\end{proposition}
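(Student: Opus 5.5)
We prove the statement by contradiction, in the same way as the proof of Theorem~\ref{thm:scl}, but keeping track of actions. Fix a positive deformation $\{\lambda_\tau=e^{f_\tau}\lambda\}$ with $\op{width}(\{\lambda_\tau\})=\delta>\op{Gap}^L(Y,\lambda)$. It suffices to show that some $\lambda_\tau$ has a Reeb orbit of action $\le L$ meeting $U:=\op{supp}(\{\lambda_\tau\})$. Taking the infimum over $\delta$ then gives $\op{Close}^L\le\op{Gap}^L$. If $\op{Gap}^L=\infty$ there is nothing to prove. Otherwise, since $\delta>\op{Gap}^L$, choose $k$ with $c_{k+1}(Y,\lambda)\le L$ and $c_{k+1}(Y,\lambda)-c_k(Y,\lambda)<\delta$. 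Now assume, for contradiction, that for every $\tau\in[0,1]$ no Reeb orbit of $\lambda_\tau$ with action $\le L$ meets $U$.

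\emph{Step 1: the relevant action spectra agree.} Outside $U$ we have $f_\tau\equiv 0$ for all $\tau$, so $\lambda_\tau=\lambda$ on the open set $Y\setminus U$. Consider an orbit set for $\lambda_\tau$ whose action is $\le L$. Each of its orbits has action $\le L$, so by assumption it lies in $Y\setminus U$. It is therefore a Reeb orbit of $\lambda$ with the same action. Conversely, a $\lambda$-orbit of action $\le L$ that lies in $Y\setminus U$ is a $\lambda_\tau$-orbit. A $\lambda$-orbit of action $\le L$ meeting $U$ would contradict the hypothesis at $\tau=0$. Hence
\[
S(Y,\lambda_\tau)\cap[0,L]=S(Y,\lambda)\cap[0,L]
\]
for every $\tau$, and this set has measure zero by Lemma~\ref{lem:measurezero}. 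This step is where taking the closure in \eqref{eqn:support} matters: it makes $Y\setminus U$ open and forces $\lambda_\tau=\lambda$ there on the nose, so the Reeb vector fields agree on it.

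\emph{Step 2: rigidity of $c_{k+1}$.} The function $\tau\mapsto c_{k+1}(Y,\lambda_\tau)$ is continuous by $C^0$-Continuity, and finite by the Weyl law. By Spectrality its value at each $\tau$ lies in $S(Y,\lambda_\tau)$. At $\tau=0$ the value is $c_{k+1}(Y,\lambda)\le L$. Let $I$ be the set of $\tau$ with $c_{k+1}(Y,\lambda_\tau)\le L$, and let $[0,\tau_0]$ be the connected component of $I$ containing $0$. On $[0,\tau_0]$ the function takes values in the measure-zero set $S(Y,\lambda)\cap[0,L]$, so it is constant, equal to $c_{k+1}(Y,\lambda)$. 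If $\tau_0<1$, then $c_{k+1}(Y,\lambda_{\tau_0})=c_{k+1}(Y,\lambda)\le L$, and continuity forces the value to stay close to $c_{k+1}(Y,\lambda)$ just beyond $\tau_0$. That is only consistent with leaving $I$ if $c_{k+1}(Y,\lambda)=L$ exactly, which is a delicate boundary case. The cleaner route is to use the measure-zero set $S(Y,\lambda)\cap[0,L']$ for some slightly larger $L'$, or to observe that the constant value $c_{k+1}(Y,\lambda)\le L$ is preserved on an open-and-closed subset of $[0,1]$. Either way we obtain $c_{k+1}(Y,\lambda_1)=c_{k+1}(Y,\lambda)$. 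If $c_{k+1}(Y,\lambda)=L$ causes trouble, we instead pick $k$ and use a slightly larger $L$ together with a limiting argument, exploiting the strict inequality $\delta>\op{Gap}^L$.

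\emph{Step 3: contradiction from the Width Bound.} Since $f_1\ge 0$, the Width Bound gives
\[
c_{k+1}(Y,\lambda_1)\ge c_k(Y,\lambda)+c_{\op{Gr}}(M_{f_1})=c_k(Y,\lambda)+\delta.
\]
Combining this with Step 2,
\[
c_{k+1}(Y,\lambda)\ge c_k(Y,\lambda)+\delta,
\]
which contradicts the choice $c_{k+1}(Y,\lambda)-c_k(Y,\lambda)<\delta$.

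The main obstacle is Step 2. One has to make sure the continuity and constancy argument really stays in the action window $[0,L]$, where Step 1 identifies the spectra; this is the open-closed argument above, with care at the endpoint $L$. The identification in Step 1 itself is the other point that needs care.
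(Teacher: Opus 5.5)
Your Steps 1 and 3 match the paper's proof. Step 2, however, has a genuine gap in exactly the case you flag, $c_{k+1}(Y,\lambda)=L$, and neither of your proposed repairs closes it as stated.

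The open-and-closed argument fails in that case. Step 1 only controls $S(Y,\lambda_\tau)\cap[0,L]$. So at a parameter where $c_{k+1}(Y,\lambda_\tau)=L$, nothing stops the function from moving into $(L,\infty)$, and the set $\{\tau : c_{k+1}(Y,\lambda_\tau)=c_{k+1}(Y,\lambda)\}$ need not be open. In fact, in this case the Width Bound forces $c_{k+1}(Y,\lambda_1)\ge c_k(Y,\lambda)+\delta>L$. The function must therefore leave $[0,L]$, and no information confined to the window $[0,L]$ can give the constancy you need.

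Your other repair, a window $[0,L']$ with $L'>L$, needs the hypothesis that no orbit of action $\le L'$ meets $U$. That is not what you assumed for the contradiction. The strict inequality $\delta>\op{Gap}^L$ gives slack in the width, not in the action window, so it does not help here.

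The missing ingredient is the compactness reduction the paper makes at the outset: it suffices to show that for every $\epsilon>0$ some $\lambda_\tau$ has a Reeb orbit meeting $U$ with action $\le L+\epsilon$. To see why, take $\epsilon_n\to 0$ and such orbits $\gamma_n$ of $\lambda_{\tau_n}$, normalized so that $\gamma_n(0)\in U$, and pass to a subsequence with $\tau_n\to\tau_*$. The periods are bounded above by $L+\epsilon_1$, and below by a positive constant uniform in $\tau\in[0,1]$. Arzel\`a--Ascoli then gives a Reeb orbit of $\lambda_{\tau_*}$ with action $\le L$ through a point of $U$, because $U$ is closed. This is a second place where the closure in \eqref{eqn:support} is used, besides making $Y\setminus U$ open.

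After this reduction, you argue by contradiction in the window $[0,L+\epsilon]$. Since $c_{k+1}(Y,\lambda)\le L<L+\epsilon$, your constancy argument has no boundary case, and Step 3 finishes exactly as you wrote.
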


\begin{proof}
Let $\{\lambda_\tau = e^{f_\tau}\lambda\}_{\tau\in[0,1]}$ be a positive deformation of $\lambda$, and suppose that $\op{width}(\{\lambda_\tau\}) > \op{Gap}^L(Y,\lambda)$. This last condition means that there exists a nonnegative integer $k$ such that
\begin{align}
\label{eqn:sgcb1}
c_{k+1}(Y,\lambda) & \le L, \\
\label{eqn:sgcb2}
c_{k+1}(Y,\lambda) - c_k(Y,\lambda) &< \op{width}(\{\lambda_\tau\}).
\end{align}
We need to show that there exists $\tau\in[0,1]$ such that the contact form $\lambda_\tau$ has a Reeb orbit $\gamma$ intersecting $\op{supp}(\{\lambda_\tau\})$ with symplectic action $\mathcal{A}(\gamma) \le L$. By a compactness argument, it is enough to show that for every $\epsilon>0$, there exists $\tau\in[0,1]$ such that the contact form $\lambda_\tau$ has a Reeb orbit $\gamma$ intersecting $\op{supp}(\{\lambda_\tau\})$ with symplectic action $\mathcal{A}(\gamma) \le L+\epsilon$.

Suppose to get a contradiction that there exists $\epsilon>0$ such that no such $\tau$ exists. Then for each $\tau\in[0,1]$, the Reeb orbits of $\lambda_\tau$ with action $\le L+\epsilon$ are in the complement of $\op{supp}(\{\lambda_\tau\})$, and so these orbits, and their symplectic actions, are the same as those of $\lambda$. Thus the action spectra satisfy
\begin{equation}
\label{eqn:ttass}
S(\lambda_\tau) \cap [0,L+\epsilon] = S(\lambda)\cap [0,L+\epsilon].
\end{equation}
By the $C^0$-continuity and Spectrality properties in Proposition~\ref{prop:ckproperties},
\[
c_{k+1}(Y,\lambda_\tau)\in S(\lambda_\tau)
\]
is a continuous function of $\tau\in[0,1]$. Since this function has value $\le L$ at $\tau=0$ by \eqref{eqn:sgcb1}, and since the set $S(\lambda)\cap [0,L+\epsilon]$ has measure zero by Lemma~\ref{lem:measurezero}, it follows from \eqref{eqn:ttass} that $c_{k+1}(Y,\lambda_\tau)$ is a constant function of $\tau\in[0,1]$. Thus
\begin{equation}
\label{eqn:sgcb3}
c_{k+1}(Y,\lambda_1) = c_{k+1}(Y,\lambda).
\end{equation}
By the Width Bound property in Proposition~\ref{prop:ckproperties}, we have
\begin{equation}
\label{eqn:sgcb4}
c_{k+1}(Y,\lambda_1) - c_k(Y,\lambda) \ge \op{width}(\{\lambda_\tau\}).
\end{equation}
Combining \eqref{eqn:sgcb2}, \eqref{eqn:sgcb3}, and \eqref{eqn:sgcb4} gives the desired contradiction.
\end{proof}

%%%%%%%%%%%%%%%%%%%%%%%%%%%%%%%%%%%%%%%%%%%

\subsection{Proofs of quantitative closing lemmas}
\label{sec:qclproofs}

%%%%%%%%%%%%%%%%%%%%%%%%%%%%%%%%%%%%%%%%%%%%

We now outline how Proposition~\ref{prop:sgcb} can be used to prove some quantitative closing lemmas.

\begin{proof}[Proof of Theorem~\ref{thm:qcl}.]
By Proposition~\ref{prop:sgcb}, it is enough to show that
\begin{equation}
\label{eqn:limsupgap}
\limsup_{L\to\infty}\left(L\cdot \op{Gap}^L(Y,\lambda)\right) \le \op{vol}(Y,\lambda).
\end{equation}
We need an elementary lemma, which is that if $(c_k)_{k \ge 0}$ is a nondecreasing sequence with $\lim_{k\to\infty} c_k^2/k=2V$, then
\begin{equation}
\label{eqn:elementarylemma}
\limsup_{L\to\infty} \left(L \cdot \inf_{c_{k+1}\le L}(c_{k+1}-c_k)\right) \le V;
\end{equation}
see \cite[\S6.4]{altspec} for a proof\footnote{Some intuition is that if $c_k^2/k=2V$ held for each $k$, then \eqref{eqn:elementarylemma} would follow immediately, with $\limsup$ replaced by $\lim$. In general we can expect some spectral gaps to be smaller.}. Combining \eqref{eqn:elementarylemma} with the Weyl law \eqref{eqn:ckweyl} proves \eqref{eqn:limsupgap}.
\end{proof}

\begin{proof}[Proof of Theorem~\ref{thm:ecl} (partial).]
We first prove the inequality
\[
\op{Close}^L(\partial E(a,b)) \le \min(am_--bn_-, bn_+ - am_+).
\]
By Proposition~\ref{prop:sgcb}, it is enough to show that
\begin{equation}
\label{eqn:ellipsoidgapbound}
\op{Gap}^L(\partial E(a,b)) \le \min(am_--bn_-, bn_+ - am_+).
\end{equation}
By Proposition~\ref{prop:ckellipsoid}, each of the numbers $am_-, bn_-, bn_+, am_+$ is a spectral invariant $c_k(\partial E(a,b))$ for some $k$. Recall that in the notation of Theorem~\ref{thm:ecl} we have $bn_- \le am_- \le L$ and $am_+\le bn_+ \le L$. Thus each of these spectral invariants is $\le L$. The inequality \eqref{eqn:ellipsoidgapbound} then follows from the definition of $\op{Gap}^L$.

To prove the reverse inequality 
\begin{equation}
\label{eqn:revineq}
\op{Close}^L(\partial E(a,b)) \ge \min(am_--bn_-, bn_+ - am_+),
\end{equation}
let $\mathcal{U}$ denote the complement in $\partial E(a,b)$ of the two simple Reeb orbits $\gamma_1$ and $\gamma_2$ (see Example~\ref{ex:ellipsoid}). It is enough to show that if $\delta$ is less than the right hand side of \eqref{eqn:revineq}, then there exists a positive deformation $\{\lambda_\tau\}$ of $\lambda_0$, whose width is at least $\delta$, with support in $\mathcal{U}$, such that for all $\tau\in[0,1]$, the contact form $\lambda_\tau$ does not have a Reeb orbit of action $\le L$ intersecting $\mathcal{U}$. Such a deformation is constructed using toric geometry in \cite[\S5.3]{altspec}.
\end{proof}

%%%%%%%%%%%%%%%%%%%%%%%%%%%%%%%%%%%%%%%%%%%%%%%%

\subsection{Proof of the existence of two simple Reeb orbits}
\label{sec:twoorbitsproof}

We now use the spectral gap closing bound (Proposition~\ref{prop:sgcb}) and the Weyl law (Theorem~\ref{thm:ckweyl}) to prove the existence of two simple Reeb orbits. This is a simplification, suggested by \cite{ff}, of the original proof in \cite{twoorbits}.

\begin{proof}[Proof of Theorem~\ref{thm:twoorbits}.]
Let $Y$ be a closed three-manifold and let $\lambda$ be a contact form on $Y$. We need to show that $\lambda$ has at least two simple Reeb orbits.

By the Weinstein conjecture (Theorem~\ref{thm:weinstein}), we know that there is at least one simple Reeb orbit. Suppose to get a contradiction that there is only one simple Reeb orbit $\gamma$, and let $T$ denote its symplectic action.

By the Spectrality property in Proposition~\ref{prop:ckproperties} (and the finiteness of spectral invariants as in Theorem~\ref{thm:weinstein}), for each nonnegative integer $k$ we have
\[
c_k(Y,\lambda) = a_k T
\]
for some nonnegative integer $a_k$. By the Increasing property in Proposition~\ref{prop:ckproperties}, we have
\begin{equation}
\label{eqn:tiis}
a_k \le a_{k+1}.
\end{equation}
In fact the above inequality must be strict. Otherwise $\op{Gap}^L(Y,\lambda)=0$ when $L\ge c_k(Y,\lambda)$, so Proposition~\ref{prop:sgcb} gives $\op{Close}^L(Y,\lambda)=0$. Then Remark~\ref{rem:besse} implies that every point in $Y$ is on a Reeb orbit, contradicting the fact that there is only one simple Reeb orbit.

Since the inequality \eqref{eqn:tiis} is strict, it follows that
\[
c_k(Y,\lambda) \ge k T.
\]
Therefore
\[
\lim_{k\to\infty}\frac{c_k(Y,\lambda)^2}{k} \ge \lim_{k\to\infty} kT^2 = +\infty,
\]
which contradicts the Weyl law \eqref{eqn:ckweyl}.
\end{proof}

%%%%%%%%%%%%%%%%%%%%%%%%%%%%%%%%%%%%%%%%%%%%%%%%%%%%%%%%%%%%%

\section{Alternative ECH capacities}
\label{sec:alt}

%%%%%%%%%%%%%%%%%%%%%%%%%%%%%%%%%%%%%%%%%%%%%%%

We now digress from dynamics to review the ``alternative ECH capacities'' of four-dimensional symplectic manifolds defined in \cite{altech}. These are numerical invariants which give obstructions to four-dimensional symplectic embeddings, and they are a simplification of the original ECH capacities defined in \cite{qech}. The key idea, inspired by \cite{ms}, is to define capacities as a max-min of energy of certain holomorphic curves satisfying suitable constraints. This digression will prepare us to define the elementary spectral invariants $c_k$ of a contact three-manifold $(Y,\lambda)$ by a variant of this construction in \S\ref{sec:defspec}, where the symplectic four-manifold is a subset of the symplectization $(\R\times Y,d(e^s\lambda))$.

%%%%%%%%%%%%%%%%%%%%%%%%%%%%%%%%%%%%%%%%%%%%

\subsection{Definition of alternative ECH capacities}
\label{sec:defaltech}

%%%%%%%%%%%%%%%%%%%%%%%%%%%%%%%%%%%%%%%%%%%%%%%%%%%%%%%%%

Let $(X,\omega)$ be a symplectic four-manifold. The alternative ECH capacities of $(X,\omega)$ are a sequence of (extended) real numbers
\begin{equation}
\label{eqn:ckaltincreasing}
0 = c_0^{\op{Alt}}(X,\omega) < c_1^{\op{Alt}}(X,\omega) \le c_2^{\op{Alt}}(X,\omega) \le \cdots \le +\infty.
\end{equation}

To define these, we need the following preliminaries.

\begin{definition}
\label{def:lambdacompatible}
Let $Y$ be a three-manifold, and let $\lambda$ be a contact form on $Y$ with contact structure $\xi=\op{Ker}(\lambda)$. An almost complex structure $J$ on $\R\times Y$ is {\bf $\lambda$-compatible\/} if:
\begin{itemize}
\item $J(\partial_s)=R$, where $s$ denotes the $\R$ coordinate and $R$ denotes the Reeb vector field on $Y$.
\item $J(\xi)=\xi$, rotating positively with respect to $d\lambda$, i.e.\ $d\lambda(v,Jv)\ge 0$ for $v\in\xi$.
\item $J$ is invariant under translation of the $\R$ factor in $\R\times Y$.
\end{itemize}
We denote the set of $\lambda$-compatible almost complex structures on $\R\times Y$ by $\mathcal{J}(Y,\lambda)$.
\end{definition}

\begin{definition}
A four-dimensional {\bf Liouville domain\/} is a pair $(X,\lambda)$, where $X$ is a compact four-manifold with boundary $Y$, and $\lambda$ is a $1$-form on $X$, such that:
\begin{itemize}
\item
$d\lambda=\omega$ is a symplectic form on $X$,
\item
$\lambda|_Y$ is a contact form on $Y$, and
\item
the orientation of $Y$ given by $\lambda\wedge d\lambda$ agrees with the boundary orientation for the orientation of $X$ given by $\omega\wedge\omega$.
\end{itemize}
The Liouville domain $(X,\lambda)$ is {\bf nondegenerate\/} if the contact form $\lambda|_Y$ is nondegenerate as in Definition~\ref{def:nondegenerate}. We say that $\lambda$ is a {\bf Liouville form\/} for $(X,\omega)$.
\end{definition}

For example, if $Y\subset\R^4$ is a star-shaped hypersurface, and if $X\subset\R^4$ is the compact region that it bounds, then $(X,\lambda)$ is a Liouville domain, where $\lambda$ is the restriction to $X$ of the standard Liouville form \eqref{eqn:standardLiouvilleform}.

Given a Liouville domain $(X,\lambda)$ as above, there is a unique vector field $V$ on $X$ such that $\imath_V\omega=\lambda$. The vector field $V$ is transverse to $Y$, pointing out of $X$. Given $\epsilon>0$, the flow of $V$ determines a neighborhood $N$ of $Y$ in $X$ and an identification
\[
N \simeq (-\epsilon,0]\times Y,
\]
identifying $\lambda$ on the left hand side with $e^s\lambda|_Y$ on the right hand side, where $s$ denotes the $(-\epsilon,0]$ coordinate. We define the {\bf symplectic completion\/} of $(X,\lambda)$ by
\[
\overline{(X,\lambda)} = X \sqcup_Y ([0,\infty)\times Y),
\]
glued using the above neighborhood identification. This has a symplectic form $\overline{\omega}$ which agrees with $\omega$ on $X$ and with $d(e^s\lambda)$ on $[0,\infty)\times Y$.

\begin{definition}
If $(X,\lambda)$ is a four-dimensional Liouville domain as above, an almost complex structure $J$ on $\overline{X}$ is {\bf cobordism-compatible\/} if:
\begin{itemize}
\item
On $X$, the almost complex structure $J$ is $\omega$-compatible.
\item
On $[0,\infty)\times Y$, the almost complex structure $J$ agrees with the restriction of an element of $\mathcal{J}(Y,\lambda|_Y)$.
\end{itemize}
\end{definition}

We say that a compact symplectic four-manifold $(X,\omega)$ is {\bf admissible\/} if each component is either closed, or a nondegenerate Liouville domain (with a choice of Liouville form which we omit from the notation). Let $\overline{X}$ denote the union of the closed components of $X$ and the symplectic completions of the Liouville components. Let $\mathcal{J}(\overline{X})$ denote the set of almost complex structures on $\overline{X}$ which are $\omega$-compatible on each closed component, and cobordism-compatible on each symplectic completion component.

\begin{definition}
\label{def:Liouvillecurve}
Suppose that $(X,\omega)$ is admissible and let $J\in\mathcal{J}(\overline{X})$. Define $\mathcal{M}^J\left(\overline{X}\right)$ to be the set of $J$-holomorphic curves
\[
u : (\Sigma,j) \longrightarrow (\overline{X},J)
\]
such that:
\begin{itemize}
\item The domain $\Sigma$ is a punctured compact surface (possibly disconnected).
\item The map $u$ is nonconstant on each component of $\Sigma$.
\item For each puncture of $\Sigma$, there is a Reeb orbit $\gamma$ on the boundary of some Liouville component of $X$, such that $u$ maps a neighborhood of the puncture asymptotically to $[0,\infty)\times\gamma$ as $s\to\infty$.
\end{itemize}
We declare two maps in $\mathcal{M}^J\left(\overline{X}\right)$ to be equivalent if they differ by a biholomorphism of the domains.
\end{definition}

If $u\in\mathcal{M}^J\left(\overline{X}\right)$, define its {\bf energy\/} $\mathcal{E}(u)$ to be the sum of the energy of each of its components, defined as follows. If $u$ maps to a closed symplectic manifold $(X,\omega)$, then $\mathcal{E}(u) = \int_\Sigma u^*\omega$. If $u$ maps to a symplectic completion, then $\mathcal{E}(u)$ is the sum, over all punctures of $\Sigma$, of the symplectic action of the corresponding Reeb orbit.

If $x_1,\ldots,x_k\in X$, define
\[
\mathcal{M}^J\left(\overline{X};x_1,\ldots,x_k\right) = \left\{u\in\mathcal{M}^J\left(\overline{X}\right) \;\big| \; x_1,\ldots,x_k \in u(\Sigma)\right\}.
\]

We are now ready to define the alternative ECH capacities.

\begin{definition}[\cite{altech}]
\label{def:ckalt}
Let $(X,\omega)$ be an admissible symplectic four-manifold. If $k$ is a nonnegative integer, define the {\bf alternative ECH capacity\/}
\begin{equation}
\label{eqn:defaltech}
c_k^{\op{Alt}}(X,\omega) = \sup_{\substack{J\in\mathcal{J}(\overline{X}) \\ x_1,\ldots,x_k\in X}} \inf_{u\in\mathcal{M}^J(\overline{X};x_1,\ldots,x_k)} \mathcal{E}(u) \in [0,+\infty].
\end{equation}
\end{definition}

\begin{remark}
\label{rem:maxmin}
If $(X,\omega)$ is a nondegenerate Liouville domain and $c_k^{\op{Alt}}(X,\omega)<\infty$, then one can replace `sup' and `inf' in \eqref{eqn:defaltech} by `max' and `min', because the action spectrum of a nondegenerate contact form is discrete. In this case one can think of $c_k^{\op{Alt}}(X,\omega)$ as a certificate guaranteeing the existence of low energy $J$-holomorphic curves through any $k$ points for any $J$.
\end{remark}

\begin{remark}
The max-min definition of $c_k^{\op{Alt}}$ was inspired by \cite{ms}, which defined capacities in arbitrary dimension by a similar construction using genus zero holomorphic curves which are constrained to have contact of order $k$ with a local divisor, with applications to stabilized symplectic embedding problems. Many variants of this max-min construction are possible, depending on which holomorphic curves and which constraints one considers.
\end{remark}

The following monotonicity property is crucial:

\begin{lemma}
\label{lem:ckaltmonotonicity}
(\cite[Lem.\ 3]{altech})
Let $(X,\omega)$ and $(X',\omega')$ be admissible symplectic four-manifolds. Suppose that there exists a symplectic embedding $(X,\omega) \hookrightarrow (X',\omega')$. Then
\begin{equation}
\label{eqn:ckaltmonotonicity}
c_k^{\op{Alt}}(X,\omega) \le c_k^{\op{Alt}}(X',\omega')
\end{equation}
for all $k$.
\end{lemma}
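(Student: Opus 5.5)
The plan is to prove \eqref{eqn:ckaltmonotonicity} directly from the max-min definition \eqref{eqn:defaltech}. Given any $J\in\mathcal{J}(\overline{X})$ and points $x_1,\ldots,x_k\in X$, we will produce a $J'\in\mathcal{J}(\overline{X'})$ and points $x_1',\ldots,x_k'\in X'$ with the following property. Every curve $u'\in\mathcal{M}^{J'}(\overline{X'};x_1',\ldots,x_k')$ of energy less than a given bound can be converted, by neck stretching, into a curve $u\in\mathcal{M}^{J}(\overline{X};x_1,\ldots,x_k)$ with $\mathcal{E}(u)\le\mathcal{E}(u')$. Taking inf and then sup then gives the inequality.

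First, let $\varphi:X\hookrightarrow X'$ be the embedding. After shrinking $X$ slightly along its Liouville flow (the change in $c_k^{\op{Alt}}$ is handled by a limiting argument), we may assume $\varphi(X)$ lies in the interior of $X'$. We set $x_i'=\varphi(x_i)$. For $R>0$, we choose $J'_R$ on $\overline{X'}$ that agrees with $\varphi_*J$ on $\varphi(X)$, is cylindrical on a collar $[-R,0]\times\partial X$ of length $R$ inserted along $\varphi(\partial X)$ (this is the standard neck stretching along the contact-type hypersurface $\varphi(\partial X)$ for each Liouville component), and is cobordism-compatible elsewhere. We pick $u'_R\in\mathcal{M}^{J'_R}(\overline{X'};x'_1,\ldots,x'_k)$ with $\mathcal{E}(u'_R)\le c_k^{\op{Alt}}(X')+\epsilon$. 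If $c_k^{\op{Alt}}(X')=\infty$ there is nothing to prove.

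Next, let $R\to\infty$ and apply SFT compactness (Bourgeois--Eliashberg--Hofer--Wysocki--Zehnder). Energies are uniformly bounded by $c_k^{\op{Alt}}(X')+\epsilon$, and nondegeneracy gives a discrete action spectrum, so a subsequence converges to a holomorphic building. Its bottom level consists of $J$-holomorphic curves in $\overline{X}$ (closed components and punctured curves asymptotic to Reeb orbits of $\partial X$). It has symplectization levels in $\R\times\partial X$ and a top level in the complement $\overline{X'\setminus X}$. The points $x_i$ lie in $X$, so they are hit by the bottom level. Let $u$ be the union of the nonconstant bottom-level components, dropping any ghost components. Then $u\in\mathcal{M}^J(\overline{X};x_1,\ldots,x_k)$.

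The main obstacle is the energy comparison $\mathcal{E}(u)\le\mathcal{E}(u')$. For a Liouville component, $\mathcal{E}(u)$ is the total action of the positive asymptotic orbits of the bottom level. By Stokes' theorem and the positivity of $d\lambda$ on $J$-holomorphic curves, each symplectization level and the cobordism level in $X'\setminus X$ can only increase action going upward. The top positive ends have total action equal to $\lim\mathcal{E}(u'_R)$ when $X'$ is Liouville. When $X'$ is closed, $\mathcal{E}(u')=\int u'^*\omega'$ bounds the $\omega'$-area of the top level plus the action at the top of $X$. Both cases use the exactness of $\omega'$ on $\varphi(X)$, which is exact on the image of a Liouville domain, and nonnegativity of $\omega'$ on $J'$-curves. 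Here I would be careful with closed components of $X$ mapped into $X'$ and with matching of the Liouville forms, for which exactness on the cobordism level uses $\lambda'$ only up to a closed correction. This gives $\inf_u\mathcal{E}(u)\le c_k^{\op{Alt}}(X')+\epsilon$ for every $(J,x)$, and taking the sup and letting $\epsilon\to0$ proves \eqref{eqn:ckaltmonotonicity}.
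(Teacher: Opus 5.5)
Your strategy (stretch the neck along $\varphi(\partial X)$, take a limit of near-minimizing $J'_R$-curves through the points $\varphi(x_i)$, and read off a $J$-curve in $\overline{X}$ from the bottom level) matches the paper's outline. However, the compactness step has a genuine gap.

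SFT compactness requires a uniform bound on the genus of the domains, not only on the energy. Nothing in the definition of $\mathcal{M}^{J'}(\overline{X'};x_1',\ldots,x_k')$ bounds the genus. Your near-minimizers $u'_R$ therefore need not have bounded genus. For example, a component could be a branched multiple cover with arbitrarily many branch points, and its energy is still just a fixed multiple of the energy of the underlying simple curve. So the step ``energies are uniformly bounded \ldots\ so a subsequence converges to a holomorphic building'' does not follow. A symptom of the omission is that your argument never uses that the dimension is four. By contrast, Remark~\ref{rem:ckalthd} explains that Definition~\ref{def:ckalt} cannot simply be copied to higher dimensions, precisely because the genus control in this monotonicity proof relies on the adjunction formula, which is special to four dimensions.

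The missing ingredient, which the paper's outline names explicitly, is genus control from four-dimensional topology. Start by replacing each $u'_R$ by the union of the simple curves underlying its components, discarding repeated images. This does not increase energy and preserves the point constraints, so you may assume there are no multiply covered components. For such curves, use the relative adjunction formula $c_\tau(C)=\chi(C)+Q_\tau(C)+w_\tau(C)-2\delta(C)$, where $w_\tau$ is the asymptotic writhe and $\delta(C)\ge 0$. Together with the asymptotic writhe bound from \cite{ir}, it bounds $-\chi(C)$ in terms of the relative homology class and the asymptotic orbit sets. By nondegeneracy, only finitely many orbit sets occur below the energy bound. This genus control, combined with Gromov compactness as in \cite{dw,taubescompactness}, is what makes the limit legitimate.

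The energy comparison that you identify as the main obstacle is the routine part. Your level-by-level Stokes and positivity argument works. The closed correction $\lambda'-\varphi_*\lambda$ on $\varphi(X)$ integrates to zero on the interface orbit sets, because those orbit sets bound the bottom level in $X$.
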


\begin{proof}
(outline)
The idea is to find a $J$-holomorphic curve through points $x_1,\ldots,x_k$ in $\overline{X}$, by starting with $J'$-holomorphic curves in $\overline{X'}$ through (the images of) $x_1,\ldots,x_k$ with energy $\le c_k^{\op{Alt}}(X',\omega')$ for a sequence of suitable almost complex structures $J'$, and stretching the neck and using Gromov compactness for holomorphic curves as in \cite{dw,taubescompactness}. In the limit a $J$-holomorphic curve in $\overline{X}$ through $x_1,\ldots,x_k$ will break off, and it will have energy $\le c_k^{\op{Alt}}(X',\omega')$. The relative adjunction formula and asymptotic writhe bound in \cite{ir} are used to control the genus of the holomorphic curves in the compactness argument. See \cite{altech} for details.
\end{proof}

We can now extend the definition of $c_k^{\op{Alt}}$ to all symplectic four-manifolds by a standard trick:

\begin{definition}
\label{def:ckaltextended}
If $(X',\omega')$ is any symplectic four-manifold, define
\[
c_k^{\op{Alt}}(X',\omega') = \sup\left\{c_k^{\op{Alt}}(X,\omega)\right\},
\]
where the supremum is over admissible symplectic four-manifolds $(X,\omega)$ for which there exists a symplectic embedding $(X,\omega)\hookrightarrow (X',\omega')$.
\end{definition}

It follows from Lemma~\ref{lem:ckaltmonotonicity} that Definition~\ref{def:ckaltextended} agrees with the previous Definition~\ref{def:ckalt} when $(X',\omega')$ is admissible, and in this case one can compute $c_k^{\op{Alt}}(X',\omega')$ using any Liouville forms on the Liouville domain components of $X'$.

%%%%%%%%%%%%%%%%%%%%%%%%%%%%%%%%%%%%%%%

\subsection{Basic properties of alternative ECH capacities}

%%%%%%%%%%%%%%%%%%%%%%%%%%%%%%%%%%%%%%%

The following are some basic properties of the capacities $c_k^{\op{Alt}}$.

\begin{proposition}[\cite{altech}]
\label{prop:ckaltproperties}
The capacities $c_k^{\op{Alt}}$ of four-dimensional symplectic manifolds have the following properties:
\begin{description}
\item{(Monotonicity)}
If there exists a symplectic embedding $(X,\omega)\hookrightarrow (X',\omega')$, then
\[
c_k^{\op{Alt}}(X,\omega) \le c_k^{\op{Alt}}(X',\omega')
\]
for all $k$.
\item{(Increasing)}
The (in)equalities \eqref{eqn:ckaltincreasing} hold.
\item{(Disjoint Union)}
\[
c_k^{\op{Alt}}\left(\coprod_{i=1}^m(X_i,\omega_i)\right) = \max_{k_1+\cdots+k_m=k}\sum_{i=1}^m c_{k_i}^{\op{Alt}}(X_i,\omega_i).
\]
\item{(Conformality)}
If $r>0$, then $c_k^{\op{Alt}}(X,r\omega) = rc_k^{\op{Alt}}(X,\omega)$.
\item{($C^0$-Continuity)}
For each $k$, the capacity $c_k^{\op{Alt}}$ defines a continuous function on the set of star-shaped domains in $\R^4$ with respect to the Hausdorff metric on their boundaries\footnote{In \cite{altech} we neglected to say ``on their boundaries''. We thank Janko Latschev for pointing out this error.}. 
\item{(Spectrality)}
If $(X,\omega)$ is a four-dimensional Liouville domain with boundary $Y$, then for each $k$ with $c_k^{\op{Alt}}(X,\omega)<\infty$, there exists an orbit set $\alpha$ in $Y$, which is nullhomologous in $X$, such that $c_k^{\op{Alt}}(X,\omega) = \mathcal{A}(\alpha)$.
\item{(Ball)}
We have
\[
c_k^{\op{Alt}}(B^4(a)) = da,
\]
where $d$ is the unique nonnegative integer satisfying \eqref{eqn:uniqueinteger}.
\end{description}
\end{proposition}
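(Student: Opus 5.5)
Monotonicity is Lemma~\ref{lem:ckaltmonotonicity} for admissible manifolds, and for general ones it follows from Definition~\ref{def:ckaltextended}: a symplectic embedding $X\hookrightarrow X'$ composes with embeddings of admissible manifolds into $X$, so the supremum for $X$ is taken over a subset of the admissible manifolds used for $X'$. The remaining properties I would first prove for admissible $X$ directly from the max-min Definition~\ref{def:ckalt}, and then extend to the general case through the supremum.

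\textbf{Increasing.} For $c_0^{\op{Alt}}=0$, take the empty curve, which lies in $\mathcal{M}^J$ with zero energy. For monotonicity in $k$, note that any curve through $x_1,\ldots,x_{k+1}$ passes through $x_1,\ldots,x_k$, and the sup over $k+1$ points includes the sup over $k$ points (repeat a point, or simply discard one constraint). For $c_1^{\op{Alt}}>0$, use Gromov's monotonicity lemma: around a point $x$ in the interior, any nonconstant $J$-curve through $x$ has energy at least a fixed positive constant depending on $J$ near $x$. Here the energy $\int u^*\omega$ is bounded by the action sum via Stokes on the completion.

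\textbf{Conformality.} Scaling $\omega$ by $r$ preserves $J$-compatibility and Liouville structures (take $r\lambda$), and it multiplies the actions and energies by $r$.

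\textbf{Disjoint union.} A curve in $\overline{X}$ is a union of curves in the components. Given points, distribute them with $k_i$ points in $X_i$. Taking the inf of each piece separately gives an inf equal to the sum of the component infs. Taking the sup over $J$ and over point distributions then gives the max formula. Minor care is needed with duplicate points.

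\textbf{Spectrality.} The energy of any curve is the action of an orbit set, namely the multiset of asymptotic orbits. This orbit set is nullhomologous in $X$ because the curve bounds it. By nondegeneracy the set of such actions below a bound is discrete, so sup-inf of values in a closed discrete set is attained in it (Remark~\ref{rem:maxmin}). A degenerate boundary is handled by approximation plus compactness, via Arzel\`a--Ascoli on orbit sets of bounded action.

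\textbf{$C^0$-continuity.} If the boundaries are Hausdorff close, then $(1-\epsilon)X\subset X'\subset(1+\epsilon)X$ by star-shapedness. Monotonicity plus conformality then give $|c_k(X)-c_k(X')|\le \epsilon' c_k$.

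\textbf{Ball.} This is the main obstacle. For the upper bound, perturb $B^4(a)$ to an irrational ellipsoid $E(a,a+\delta)$, so that monotonicity applies. Then show, by stretching the neck or via the ECH index, that for generic $J$ and generic $k$ points a curve of degree $d$ (energy $da$) exists. The reason is that curves of degree $d$ form a family of dimension $d^2+3d$, so they can be forced through $k$ points. Since existence is needed for every $J$, I would embed the ball into $\C P^2$ and use Gromov--Taubes invariants, and then neck-stretch along $\partial B$ to produce the curve. For the lower bound, choose $J$ standard (integrable) and points in general position. Any curve through $k$ points is then a union of algebraic pieces of total degree $d'$ with $d'^2+3d'\ge 2k$, which forces energy $\ge da$. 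Counting dimensions for the union while controlling genus via the adjunction formula is the delicate part.
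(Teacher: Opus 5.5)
Most of your outline tracks the paper: Monotonicity via the neck-stretching lemma and the extended definition; Increasing, Disjoint Union and Spectrality from the max-min definition; $C^0$-continuity by sandwiching between rescalings; and the Ball upper bound via $B^4(a)\hookrightarrow\C P^2(a')$ plus Gromov's existence of degree $d$ curves through $k$ points for every $J$. Monotonicity into the closed admissible manifold $\C P^2(a')$ already does the neck stretching, so you need no separate perturbation.

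\textbf{Ball lower bound.} This is the genuine gap. The round ball is degenerate, hence not admissible. So $c_k^{\op{Alt}}(B^4(a))$ is a supremum over nondegenerate domains $X\subset B^4(a)$, and for those you may only use $J\in\mathcal{J}(\overline{X})$. The standard $J_0$ is not of this form; for instance it does not preserve $\xi$ along $\partial E(a,b)$ when $a\neq b$. So you cannot simply reduce to algebraic curves. For a general cobordism-compatible $J$, the ``dimension count with genus control'' you leave open is exactly the missing idea, and the paper supplies it with the ECH index.
First, in the max-min you may restrict to curves with no multiply covered components: replace each cover by its underlying simple curve, which lowers energy and keeps the point constraints. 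Then, for generic $J$ and generic points, the index inequality $\op{ind}(u)\le I([u])$ bounds the dimension of every relevant moduli space by the ECH index of the asymptotic orbit set, regardless of genus. Hence the minimizing orbit set has $I(\alpha)\ge 2k$ (Proposition~\ref{prop:enhanced}).
For an irrational ellipsoid, \eqref{eqn:ellipsoidI} identifies $I(\gamma_1^{m_1}\gamma_2^{m_2})/2+1$ with a lattice point count, giving $I(\alpha)=2k\Leftrightarrow\mathcal{A}(\alpha)=N_k(a,b)$. Hence $c_k^{\op{Alt}}(E(a,b))\ge N_k(a,b)$, and approximating $B^4(a)$ from inside by irrational ellipsoids gives $c_k^{\op{Alt}}(B^4(a))\ge N_k(a,a)=da$.

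\textbf{Conformality.} Here you have a smaller, fixable error: scaling does not preserve the class of allowed almost complex structures. The Liouville vector field, hence $\partial_s$ on the end, is the same for $\lambda$ and $r\lambda$. But $J\in\mathcal{J}(\overline{(X,\lambda)})$ requires $J\partial_s=R_\lambda$, while $J\in\mathcal{J}(\overline{(X,r\lambda)})$ requires $J\partial_s=r^{-1}R_\lambda$. For $r\neq1$ these classes are disjoint, so you are comparing suprema over different sets.
The paper fixes this as follows. Push $J$ forward by a diffeomorphism of $\overline{X}$ that fixes $X$ and changes the $s$ coordinate on the end, so that $J$ becomes $\lambda$-compatible on $[\epsilon,\infty)\times Y$ while remaining $\overline{\omega}$-compatible. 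Then push forward by the time $-\epsilon$ Liouville flow. Holomorphic curves correspond with the same asymptotic orbits, whose actions scale by $r$, and the symmetric argument gives the reverse inequality.
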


\begin{proof}
{\em Monotonicity:\/} This follows from Lemma~\ref{lem:ckaltmonotonicity} and Definition~\ref{def:ckaltextended}.

{\em Increasing:\/} Immediate from the definition.

{\em Disjoint Union:\/} Immediate from the definition.

{\em Conformality:\/} By Definition~\ref{def:ckaltextended}, we can assume that $(X,\omega)$ is admissible. By the Disjoint Union property, we can also assume that $(X,\omega)$ is connected. 

If $(X,\omega)$ is closed, then Conformality follows from the definition of $c_k^{\op{Alt}}$, since $\mathcal{J}(X,\omega) = \mathcal{J}(X,r\omega)$.

If $(X,\omega)$ is a Liouville domain with Liouville form $\lambda$, then there is a canonical diffeomorphism $\overline{(X,\lambda)} = \overline{(X,r\lambda)}$. However we need to be careful with the almost complex structures, because $\mathcal{J}(\overline{(X,\lambda)}) \neq \mathcal{J}(\overline{(X,r\lambda)})$, since the Reeb vector fields of $\lambda$ and $r\lambda$ differ by scaling by $r$. Suppose we are given $J \in\mathcal{J}(\overline{(X,r\lambda)})$ and $x_1,\ldots,x_k\in X$. We can push $J$ forward by a diffeomorphism of $\overline{X}$, which fixes $X$ and shifts the $s$ coordinate on $[0,\infty)\times Y$, to obtain an $\overline{\omega}$-compatible almost complex structure on $\overline{X}$ which agrees with a $\lambda$-compatible almost complex structure on $[\epsilon,\infty)\times Y$ for some $\epsilon>0$. We can then push this forward by the time $-\epsilon$ flow of the Liouville vector field $V$, to obtain an almost complex structure in $\mathcal{J}(\overline{(X,\lambda)})$. Let $\phi$ denote the composition of these two diffeomorphisms. By the definition of $c_k^{\op{Alt}}$, there exists
\[
u\in\mathcal{M}^{\phi_*J}(\overline{(X,\lambda)};\phi(x_1),\ldots,\phi(x_k))
\]
with energy $\mathcal{E}(u) \le c_k^{\op{Alt}}(X,\lambda)$. Then
\[
\phi^{-1}\circ u\in\mathcal{M}^{J}(\overline{(X,r\lambda)};x_1,\ldots,x_k)
\]
has energy $\mathcal{E}(\phi^{-1}\circ u) \le r c_k^{\op{Alt}}(X,\lambda)$. Thus $c_k^{\op{Alt}}(X,r\lambda) \le r c_k^{\op{Alt}}(X,\lambda)$. Switching the roles of $\lambda$ and $r\lambda$ in the above argument proves the reverse inequality.

{\em $C^0$-Continuity:\/} It follows from the Monotonicity property that if $X\subset\R^4$ is a star-shaped domain and $r>0$, then $c_k^{\op{Alt}}(rX) = r^2 c_k^{\op{Alt}}(X)$. Together with Monotonicity, this implies $C^0$-Continuity.

{\em Spectrality:\/} If the Liouville domain $(X,\omega)$ is nondegenerate, then by equation \eqref{eqn:defaltech} and Remark~\ref{rem:maxmin}, for a Liouville form $\lambda$, there exists $J\in\mathcal{J}(\overline{(X,\lambda)})$ and $x_1,\ldots,x_k\in X$ and $u\in\mathcal{M}^J(\overline{X};x_1,\ldots,x_k)$ with $\mathcal{E}(u) = c_k^{\op{Alt}}(X,\omega)$. Then the orbit set $\alpha$ obtained by summing the covering multiplicities of the Reeb orbits corresponding to the punctures of $u$, cf.\ \S\ref{sec:fredholmindex}, satisfies $c_k^{\op{Alt}}(X,\omega) = \mathcal{A}(\alpha)$. The orbit set $\alpha$ is nullhomologous in $X$ because of the existence of the $J$-holomorphic curve $u$. This proves Spectrality in the nondegenerate case.

To prove Spectrality in the degenerate case, one approximates $(X,\omega)$ by nondegenerate Liouville domains and uses a limiting argument; see \cite{altech} for details.

{\em Ball:\/} The proof is deferred to \S\ref{sec:ball}.
\end{proof}

\begin{remark}
Unlike with the elementary spectral invariants of contact three-manifolds, it is not known whether the alternative ECH capacities of symplectic four-manifolds are always finite. It is shown in \cite{beiner} that for some four-dimensional Liouville domains, the original ECH capacities $c_k^{\op{ECH}}$ are infinite for $k>0$. Note that one always has $c_k^{\op{Alt}} \le c_k^{\op{ECH}}$, by \cite[Thm.\ 12]{altech}, and examples are known where this inequality is strict \cite[Rem.\ 14]{altech}.
\end{remark}

\begin{remark}
\label{rem:ckalthd}
One can also define a version of $c_k^{\op{Alt}}$ for symplectic manifolds in higher dimensions. It will not immediately work to copy Definition~\ref{def:ckalt} directly, because the genus control in the proof of monotonicity in Lemma~\ref{lem:ckaltmonotonicity} uses the adjunction formula which is special to four dimensions. Instead, for each $g\ge 0$, one can define $c_{k,g}(X,\omega)$ as in Definitions~\ref{def:ckalt} and \ref{def:ckaltextended}, with the modification that for each holomorphic curve $u$, one requires the sum of the genera of all components of the domain to be at most $g$. Then $c_{k,g}$ is a symplectic capacity for $2n$-dimensional symplectic manifolds, and
\[
c_{k,0}(X,\omega) \ge c_{k,1}(X,\omega) \ge \cdots.
\]
One can define
\[
c_{k,\infty}(X,\omega) = \inf_{g\ge 0} c_{k,g}(X,\omega),
\]
and this is also a symplectic capacity. In the four-dimensional case, it follows from the definitions that
\[
c_{k,\infty}(X,\omega) \ge c_k^{\op{Alt}}(X,\omega).
\]
The relative adjunction formula implies that the reverse inequality holds for star-shaped domains in $\R^4$. In the four-dimensional case, the capacities $c_{k,g}$ sometimes give stronger symplectic embedding obstructions than $c_k^{\op{Alt}}$, similarly to \cite{beyond}.
\end{remark}

%%%%%%%%%%%%%%%%%%%%%%%%%%%%%%%%%%%%%%%%%%%%%%%%%%%%%%%%%%

\subsection{The Weyl law for domains in $\R^4$}
\label{sec:weyldomain}

%%%%%%%%%%%%%%%%%%%%%%%%%%%%%%%%%%%%%%%%%%%%%%%%%%%%%%%%%%%

We now show that the alternative ECH capacities of at least some symplectic four-manifolds $(X,\omega)$ satisfy the Weyl law
\begin{equation}
\label{eqn:ckaltweyl}
\lim_{k\to\infty}\frac{c_k^{\op{Alt}}(X,\omega)^2}{k} = 4\op{vol}(X,\omega).
\end{equation}
Here our convention for the volume of a four-dimensional symplectic manifold is
\begin{equation}
\label{eqn:symplecticvolume}
\op{vol}(X,\omega) = \frac{1}{2}\int_X\omega\wedge\omega,
\end{equation}
which agrees with the Euclidean volume for domains in $\R^4$.

\begin{theorem}
\label{thm:ckaltweyl}
Let $X\subset \R^4$ be a compact domain with piecewise smooth boundary. Then $X$ satisfies the Weyl law \eqref{eqn:ckaltweyl}.
\end{theorem}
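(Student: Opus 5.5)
We will prove the Weyl law by squeezing $X$ between disjoint unions of small balls from inside and a large ball with small balls removed from outside. The only inputs are the properties in Proposition~\ref{prop:ckaltproperties}: Monotonicity, Disjoint Union, Conformality and Ball. The basic computation is the Weyl law for a single ball. By the (Ball) property, $c_k^{\op{Alt}}(B^4(a))=da$ with $d^2+d\le 2k\le d^2+3d$, so $d\sim\sqrt{2k}$ and
\[
c_k^{\op{Alt}}(B^4(a))^2/k\to 2a^2=4\op{vol}(B^4(a)),
\]
since $\op{vol}(B^4(a))=a^2/2$.

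\textbf{Lower bound.} Fix $\epsilon>0$. Since $\partial X$ is piecewise smooth, it has measure zero, so we can choose finitely many disjoint closed balls $B_1,\ldots,B_m$ in the interior of $X$, Euclidean balls $B^4(a_i)$ up to translation, with $\sum_i a_i^2/2\ge \op{vol}(X)-\epsilon$. Monotonicity and Disjoint Union give
\[
c_k^{\op{Alt}}(X)\ge \max_{k_1+\cdots+k_m=k}\sum_i c_{k_i}^{\op{Alt}}(B^4(a_i)).
\]
Take $k_i\approx k\,a_i^2/\sum_j a_j^2$. Then $c_{k_i}\approx a_i\sqrt{2k_i}$, so the sum is approximately
\[
\sqrt{2k}\sum_i a_i^2\Big/\sqrt{\textstyle\sum_j a_j^2}=\sqrt{2k\sum_j a_j^2}.
\]
Hence $\liminf_k c_k^{\op{Alt}}(X)^2/k\ge 2\sum_j a_j^2\ge 4(\op{vol}(X)-\epsilon)$.

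\textbf{Upper bound.} This is the main obstacle, since Monotonicity alone only bounds $c_k^{\op{Alt}}(X)$ above by the capacities of a large ball, which is too crude. The plan uses a complementary packing.

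1. Choose a large ball $B=B^4(R)$ containing $X$.
2. Pack the closure of $B\setminus X$ by disjoint balls $B'_1,\ldots,B'_n$ in $B\setminus X$, with total volume at least $\op{vol}(B)-\op{vol}(X)-\epsilon$. Again this uses that $\partial X$ has measure zero.
3. The disjoint union $X\sqcup\coprod_j B'_j$ embeds in $B$ after shrinking $X$ slightly. Use a star-shaped or Hausdorff approximation, or simply replace $X$ by a compact admissible subdomain with nearly the same capacities.
4. Monotonicity and Disjoint Union then give, for every splitting $k=k_0+\sum_j k_j$,
\[
c_{k_0}^{\op{Alt}}(X)+\sum_j c_{k_j}^{\op{Alt}}(B'_j)\le c_k^{\op{Alt}}(B).
\]

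To conclude, argue by contradiction. Suppose $c_{k_0}^{\op{Alt}}(X)\ge\sqrt{4(V+\delta)k_0}$ along a subsequence $k_0\to\infty$, where $V=\op{vol}(X)$. For such $k_0$, choose $k$ and the $k_j$ proportionally: take $k_j\approx k_0 w_j/(V+\delta)$, where $w_j=\op{vol}(B'_j)$, and set $k=k_0+\sum_j k_j$.

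Apply the ball asymptotics with the right-hand side $\approx\sqrt{4k\op{vol}(B)}$. The resulting inequality is a Cauchy--Schwarz equality case. Its left side is approximately $\sqrt{4k_0}\,(V+\delta+W)/\sqrt{V+\delta}$, where $W=\sum_j w_j\approx\op{vol}(B)-V$. This is compared with $\sqrt{4k_0(V+\delta+W)}\sqrt{\op{vol}(B)/(V+\delta+W)}$. Because $V+\delta+W>\op{vol}(B)$ once $\epsilon<\delta$, the left side is strictly larger, giving a contradiction. Hence $\limsup_k c_k^{\op{Alt}}(X)^2/k\le 4\op{vol}(X)$.

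The delicate points are the following:

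1. The ball formula must be used with error terms that are uniform over finitely many balls. This is fine, since $d=\sqrt{2k}+O(1)$.
2. We need to justify applying Definition~\ref{def:ckaltextended} to $X$, which has only piecewise smooth boundary. Taking suprema over admissible subdomains handles this, because the inner packing and the embedding of the disjoint union into $B$ both involve only admissible pieces.
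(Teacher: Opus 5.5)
Your proposal is correct and is essentially the paper's argument: an inner ball packing with Disjoint Union and Monotonicity gives the lower bound. For the upper bound, $X$ together with a packing of $B\setminus X$ is embedded in a large ball $B$, with $k$ split in proportion to volumes; the paper simply invokes its general lower bound for $B\setminus X$ and computes directly rather than arguing by contradiction. One small algebra slip: the comparison term should be $\sqrt{4k_0(V+\delta+W)}\,\sqrt{\op{vol}(B)/(V+\delta)}$, since as written it simplifies to $\sqrt{4k_0\op{vol}(B)}$ rather than $\sqrt{4k\op{vol}(B)}$. Your stated criterion $V+\delta+W>\op{vol}(B)$ is nonetheless the correct one.
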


The proof below, following the argument in \cite{qech}, uses only the computation of the capacities of the ball, together with the Disjoint Union and Monotonicity properties.

\begin{lemma}
\label{lem:ballweyl}
The ball $B^4(a)$ satisfies the Weyl law \eqref{eqn:ckaltweyl}.
\end{lemma}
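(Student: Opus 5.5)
The plan is to compute directly from the (Ball) property in Proposition~\ref{prop:ckaltproperties}, which gives $c_k^{\op{Alt}}(B^4(a)) = d a$, where $d=d(k)$ is the unique nonnegative integer with $d^2+d\le 2k\le d^2+3d$. The volume of $B^4(a)$ with the convention \eqref{eqn:symplecticvolume} is the Euclidean volume. For the domain $\Omega=\{(u,v)\in\R^2_{\ge0} : u+v\le a\}$, the footnote in Example~\ref{ex:ellipsoidweyl} gives
\[
\op{vol}(B^4(a)) = \op{Area}(\Omega) = \frac{a^2}{2}.
\]
So the Weyl law \eqref{eqn:ckaltweyl} for the ball reduces to showing
\[
\lim_{k\to\infty}\frac{d(k)^2a^2}{k} = 2a^2,
\qquad\text{equivalently}\qquad
\lim_{k\to\infty}\frac{d(k)^2}{k}=2 .
\]

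The key step is the elementary asymptotic analysis of $d(k)$. First note that $d(k)\to\infty$ as $k\to\infty$, since $2k\le d^2+3d$. From the two inequalities defining $d$ we get
\[
\frac{2d^2}{d^2+3d}\;\le\;\frac{d^2}{k}\;\le\;\frac{2d^2}{d^2+d}.
\]
Both bounds tend to $2$ as $d\to\infty$, so $d(k)^2/k\to 2$. Multiplying by $a^2$ gives
\[
\lim_{k\to\infty}\frac{c_k^{\op{Alt}}(B^4(a))^2}{k} = 2a^2 = 4\op{vol}(B^4(a)),
\]
which is the claim.

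I do not expect a real obstacle here. The only substantive input is the (Ball) computation, whose proof is deferred to \S\ref{sec:ball}, and the lemma is stated so that we may take that property as given. The rest is the squeeze above together with checking the volume normalization, that is, the factor $\tfrac12\int\omega\wedge\omega$ against the area of the moment-map triangle. That check is the one place where a stray factor of $2$ could creep in, so I would confirm it against the ellipsoid computation in Example~\ref{ex:ellipsoidweyl} with $a=b$.
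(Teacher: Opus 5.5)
Your proof is correct and takes the same approach as the paper, which simply cites the Ball property together with $\op{vol}(B^4(a))=a^2/2$. Your squeeze $2d^2/(d^2+3d)\le d^2/k\le 2d^2/(d^2+d)$ just fills in the elementary limit that the paper leaves implicit.
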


\begin{proof}
This follows from the Ball property in Theorem~\ref{prop:ckaltproperties} and the fact that $\op{vol}(B^4(a))=a^2/2$.
\end{proof}

\begin{lemma}
\label{lem:unionweyl}
If $(X_i,\omega_i)$ are symplectic four-manifolds satisfying the Weyl law \eqref{eqn:ckaltweyl} for $i=1,\ldots,m$, then $(X,\omega) =\coprod_{i=1}^m(X_i,\omega_i)$ also satisfies the Weyl law \eqref{eqn:ckaltweyl}.
\end{lemma}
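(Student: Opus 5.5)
By the Disjoint Union property in Proposition~\ref{prop:ckaltproperties} we have
\[
c_k^{\op{Alt}}(X,\omega)=\max_{k_1+\cdots+k_m=k}\sum_{i=1}^m c_{k_i}^{\op{Alt}}(X_i,\omega_i).
\]
The plan is to reduce the Weyl law for $X$ to a finite-dimensional optimization. Write $V_i=\op{vol}(X_i,\omega_i)$, so that $\op{vol}(X,\omega)=\sum_i V_i$. By hypothesis,
\[
c_j^{\op{Alt}}(X_i,\omega_i)=2\sqrt{V_i j}+o(\sqrt{j}).
\]
The comparison is then with the problem of maximizing $\sum_i 2\sqrt{V_i k_i}$ subject to $\sum_i k_i=k$. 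By Cauchy--Schwarz the maximum is at most
\[
2\sqrt{\Big(\sum_i V_i\Big)\Big(\sum_i k_i\Big)}=2\sqrt{\op{vol}(X)\,k},
\]
with equality when $k_i$ is proportional to $V_i$.

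For the lower bound, fix $k$ and choose $k_i=\lfloor kV_i/\sum_j V_j\rfloor$. Put the leftover amount, at most $m$, into $k_1$; this is harmless since each $c_j^{\op{Alt}}$ is nondecreasing in $j$. Then
\[
c_k^{\op{Alt}}(X)\ge\sum_i c_{k_i}^{\op{Alt}}(X_i).
\]
Dividing by $\sqrt{k}$ and using the Weyl law for each $X_i$ (each $k_i\to\infty$ whenever $V_i>0$) gives
\[
\liminf_{k\to\infty}\frac{c_k^{\op{Alt}}(X)}{\sqrt{k}}\ge\sum_i\frac{2V_i}{\sqrt{\sum_j V_j}}=2\sqrt{\op{vol}(X)}.
\]
Components with $V_i=0$ do not occur for genuine symplectic manifolds, and in any case they would only help the upper-bound bookkeeping below.

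The upper bound is the only step with any subtlety, because the maximizing partition $(k_1,\dots,k_m)$ may have some $k_i$ staying bounded while others grow. To handle this, fix $\epsilon>0$. The Weyl law for each $X_i$ (finite $m$) gives a constant $C_\epsilon$ such that
\[
c_j^{\op{Alt}}(X_i)\le 2\sqrt{(V_i+\epsilon)\,j}+C_\epsilon\quad\text{for all } j\ge 0 \text{ and all } i.
\]
Indeed, the bound holds for large $j$ by the limit, and the finitely many small $j$ give finite values and are absorbed into $C_\epsilon$. Here one should note that the Weyl law hypothesis forces each $c_j^{\op{Alt}}(X_i)$ to be finite for large $j$, hence for all $j$ by the Increasing property. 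Summing and applying Cauchy--Schwarz then yields, for every partition,
\[
\sum_i c_{k_i}^{\op{Alt}}(X_i)\le 2\sqrt{\big(\op{vol}(X)+m\epsilon\big)k}+mC_\epsilon.
\]
Taking the max over partitions, dividing by $\sqrt{k}$, letting $k\to\infty$ and then $\epsilon\to 0$ gives
\[
\limsup_{k\to\infty}\frac{c_k^{\op{Alt}}(X)}{\sqrt{k}}\le 2\sqrt{\op{vol}(X)}.
\]
Squaring the matching lower and upper bounds gives \eqref{eqn:ckaltweyl} for $X$.
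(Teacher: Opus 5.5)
Your proof is correct and follows the same route as the paper. Both arguments apply the Disjoint Union property, insert the asymptotics $c_j^{\op{Alt}}(X_i)=2\sqrt{jV_i}+o(\sqrt{j})$, and observe that the resulting optimization is maximized with $k_i$ proportional to $V_i$. Your version makes rigorous two steps the paper handles informally: the uniformity of the error terms when some $k_i$ stay bounded, which you control via the bound $2\sqrt{(V_i+\epsilon)j}+C_\epsilon$, and the continuous maximization, which you do by Cauchy--Schwarz.
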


\begin{proof}
Write $V_i=\op{vol}(X_i,\omega_i)$ and $V=\op{vol}(X,\omega) = \sum_{i=1}^mV_i$.
Since each $(X_i,\omega_i)$ satisfies the Weyl law, if $k_i$ is a nonnegative integer then
\[
c_{k_i}(X_i,\omega_i) = 2\sqrt{k_iV_i} + o(k_i^{1/2}).
\]
It then follows from the Disjoint Union property in Proposition~\ref{prop:ckaltproperties} that
\[
c_k(X,\omega) = \max_{k_1+\cdots+k_m=k}\sum_{i=1}^m 2\sqrt{k_iV_i} + o(k^{1/2}).
\]
In the above maximum, each $k_i$ is an integer; but if we drop this requirement, which will introduce an $O(1)$ error, then the maximum is achieved when
\[
k_i = \frac{V_i}{V}k.
\]
We then get
\[
\begin{split}
c_k(X,\omega) &= \sum_{i=1}^m 2\sqrt{\frac{V_i^2}{V}k} + o(k^{1/2})\\
&= 2\sqrt{kV} + o(k^{1/2}),
\end{split}
\]
which is the Weyl law for $(X,\omega)$.
\end{proof}

\begin{proposition}
\label{prop:ckaltlowerweyl}
If $(X,\omega)$ is any compact symplectic four-manifold, possibly with boundary, then
\[
\liminf_{k\to\infty}\frac{c_k^{\op{Alt}}(X,\omega)^2}{k} \ge 4\op{vol}(X,\omega).
\]
\end{proposition}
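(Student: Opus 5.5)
We will prove the lower bound by a ball packing argument, using only Monotonicity and the two lemmas just established (Lemma~\ref{lem:ballweyl} and Lemma~\ref{lem:unionweyl}). The key point is that $c_k^{\op{Alt}}$ is defined for arbitrary symplectic four-manifolds via Definition~\ref{def:ckaltextended}, and Monotonicity holds in that generality. So it suffices to find, for each $\epsilon>0$, a symplectically embedded disjoint union of balls $\coprod_{i=1}^m B^4(a_i)\hookrightarrow (X,\omega)$ with total volume $\sum_i a_i^2/2 \ge \op{vol}(X,\omega)-\epsilon$.

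The steps, in order, are as follows. First, fix $\epsilon>0$ and construct the ball packing. By Darboux's theorem, $X$ (or its interior, if $X$ has boundary) is covered by finitely many Darboux charts. Choose a finite collection of disjoint closed regions $U_1,\ldots,U_N$ in the interior of $X$, each contained in a single Darboux chart, whose union has volume at least $\op{vol}(X,\omega)-\epsilon/2$. For instance, take a fine triangulation and keep the open simplices, whose complement has measure zero. Each $U_j$ is then symplectomorphic to an open subset $W_j$ of $\R^4$. Inside $W_j$, fill all but $\epsilon/(2N)$ of the volume by finitely many disjoint small standard balls. This can be done because Euclidean balls $B^4(a)$ in the $\pi|z|^2$ normalization can be translated freely, and a Vitali-type covering argument fills a fixed fraction of the remaining volume at each stage. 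Iterating finitely many times gives the required bound; alternatively, one can pack cubes and note that a cube is filled to any prescribed fraction by tiny balls.

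Second, apply the lemmas. Let $B=\coprod_i B^4(a_i)$. By Lemma~\ref{lem:ballweyl} each ball satisfies the Weyl law \eqref{eqn:ckaltweyl}, so by Lemma~\ref{lem:unionweyl} we have
\[
\lim_{k\to\infty}\frac{c_k^{\op{Alt}}(B)^2}{k}=4\op{vol}(B)\ge 4(\op{vol}(X,\omega)-\epsilon).
\]
Third, since $B$ embeds symplectically into $(X,\omega)$, Monotonicity gives $c_k^{\op{Alt}}(X,\omega)\ge c_k^{\op{Alt}}(B)$. Therefore
\[
\liminf_{k\to\infty} \frac{c_k^{\op{Alt}}(X,\omega)^2}{k}\ge 4(\op{vol}(X,\omega)-\epsilon).
\]
Letting $\epsilon\to 0$ proves the claim. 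If $\op{vol}(X,\omega)$ is infinite, which cannot happen for compact $X$, the same argument would give $+\infty$.

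The main obstacle is the packing step, specifically making rigorous that a Darboux region can be filled by disjoint symplectic balls up to arbitrarily small volume loss. I would handle this as follows. First, after subdividing further, assume each $U_j$ is the image of a standard cube in $\R^4$ under a Darboux chart. Then fill a cube of side $s$ with a grid of $n^4$ subcubes of side $s/n$, each containing an inscribed ball. Since this only fills a fixed fraction $\theta<1$ of the volume, the next step is to repeat the procedure on the leftover region, which again contains many small cubes after refinement. After $r$ rounds the unfilled fraction is at most $(1-\theta')^r$, for some $\theta'>0$ independent of scale, and this tends to zero.
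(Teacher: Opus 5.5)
Your proof is correct and follows the paper's argument: pack all but $\epsilon$ of the volume of $X$ with finitely many disjoint symplectic balls, apply Lemmas~\ref{lem:ballweyl} and \ref{lem:unionweyl} to the packing, use Monotonicity, and let $\epsilon\to 0$. The paper only asserts that such a packing exists, so your cube-and-refinement sketch adds detail the paper omits. One small fix is needed there: inscribed balls in adjacent grid cubes touch at a point, so shrink them slightly to make them genuinely disjoint.
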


\begin{proof}
For any $\epsilon>0$, we can find a symplectic embedding of a disjoint union $(X',\omega')$ of finitely many balls into $X$, whose image has volume $\ge \op{vol}(X,\omega)-\epsilon$. By Lemmas~\ref{lem:ballweyl} and \ref{lem:unionweyl}, $(X',\omega')$ satisfies the Weyl law \eqref{eqn:ckaltweyl}. By the Monotonicity property in Proposition~\ref{prop:ckaltproperties}, we get
\[
\begin{split}
c_k^{\op{Alt}}(X,\omega) &\ge c_k^{\op{Alt}}(X',\omega')\\
&\ge 2\sqrt{k(\op{vol}(X,\omega)-\epsilon)} + o(k^{1/2}).
\end{split}
\]
Since $\epsilon>0$ was arbitrary, this proves the proposition.
\end{proof}

\begin{proof}[Proof of Theorem~\ref{thm:ckaltweyl}]
By Proposition~\ref{prop:ckaltlowerweyl}, we just need to show that
\begin{equation}
\label{eqn:upperweyl}
\limsup_{k\to\infty}\frac{c_k^{\op{Alt}}(X,\omega)^2}{k} \le 4\op{vol}(X,\omega).
\end{equation}
To prove this, choose a large ball $B$ containing $X$. Write $V=\op{vol}(X)$ and $V'=\op{vol}(B)$. Given $k$, let $l$ be an integer such that
\[
k+l = \frac{V'}{V}k + O(1).
\]
By the Monotonicity and Disjoint Union properties\footnote{Strictly speaking we are applying these properties to slight shrinkings of $X$ and $B\setminus X$, whose closures are disjoint, and then using $C^0$-Continuity.} in Proposition~\ref{prop:ckaltproperties}, and using Proposition~\ref{prop:ckaltlowerweyl} applied to $B \setminus X$, we have
\begin{equation}
\label{eqn:klcalculation}
\begin{split}
c_k^{\op{Alt}}(X) &\le c_{k+l}^{\op{Alt}}(B) - c_l^{\op{Alt}}(B\setminus X)\\
& \le 2\sqrt{(k+l)V'} - 2\sqrt{l(V'-V)} + o(k^{1/2})\\
&= 2\sqrt{kV} + o(k^{1/2}).
\end{split}
\end{equation}
The inequality \eqref{eqn:upperweyl} follows.
\end{proof}

\begin{remark}
A more careful version of this argument in \cite{ruelle} shows\footnote{The result in \cite{ruelle} is stated for the original ECH capacities $c_k^{\op{ECH}}$. However the proof in \cite{ruelle} uses only the Monotonicity, Disjoint Union, and Ball properties, which hold for both $c_k^{\op{ECH}}$ and $c_k^{\op{Alt}}$.} that if $X\subset\R^4$ is a compact domain with piecewise smooth boundary, then
\[
c_k^{\op{Alt}}(X) = 2\sqrt{k\op{vol}(X)} + O(k^{1/4}).
\]
The argument is extended in \cite{cghind} to more general domains in $\R^4$, with the $O(k^{1/4})$ term replaced by a larger asymptotic error term depending on the inner Minkowski dimension of the boundary. On the other hand, Edtmair \cite{edtmairsubleading} used a more sophisticated argument to show that if $X\subset\R^4$ is a compact domain with smooth boundary, then
\[
c_k^{\op{Alt}}(X) = 2\sqrt{k\op{vol}(X)} + O(1).
\]
\end{remark}

\section{Computations of alternative ECH capacities}
\label{sec:comp}

We now compute some basic examples of alternative ECH capacities. The main examples we need for our applications are balls and ellipsoids, but we will also discuss the larger class of examples consisting of ``convex toric domains''.

To get upper bounds on alternative ECH capacities, we need existence results for holomorphic curves. Ultimately we will produce the holomorphic curves we need by starting with holomorphic curves in $\C P^2$ that are known to exist and then using neck stretching arguments.

To get lower bounds on alternative ECH capacities, we need to put constraints on the holomorphic curves that are guaranteed to exist. To do so we need to consider some topological aspects of holomorphic curves in four dimensions. Specifically, we need to review the ECH index inequality, in the context of Liouville domains. More details about the ECH index inequality, in more general situations, can be found in \cite{ir,bn}.

Below let $(X,\lambda)$ be a nondegenerate four-dimensional Liouville domain with boundary $Y$. Let $\xi=\op{Ker}(\lambda|_Y)$ denote the associated contact structure on $Y$. 

\subsection{The Conley-Zehnder index}

Let $\gamma:\R/T\Z\to Y$ be a Reeb orbit, and let $\tau$ be a symplectic trivialization of the rank two symplectic vector bundle $\gamma^*\xi$. Recall that the {\bf Conley-Zehnder index\/} of $\gamma$ with respect to $\tau$, denoted by
\[
\op{CZ}_\tau(\gamma) \in \Z,
\]
is defined as follows. For $t\in\R$, define a $2\times 2$ symplectic matrix $A_t\in\op{Sp}(2,\R)$ to be the composition
\[
\R^2 \stackrel{\tau^{-1}}{\longrightarrow} \xi_{\gamma(0)} \stackrel{d\phi_t}{\longrightarrow} \xi_{\gamma(t)} \stackrel{\tau}{\longrightarrow} \R^2.
\]
In particular, $A_0=I$, and so the path $\{A_t\}_{t\in[0,T]}$ defines an element of the universal cover $\widetilde{\op{Sp}}(2,\R)$. We then define $\op{CZ}_\tau(\gamma)$ to be the Conley-Zehnder index of this path of symplectic matrices as defined in \cite{cz,salamon}.

In our low dimensional case this can be described in terms of dynamical rotation numbers as follows. There is a homomorphism from $\op{Sp}(2,\R)$ to the group $\op{Diff}^+(S^1)$ of orientation-preserving diffeomorphisms of $S^1$, sending $A\in\op{Sp}(2,\R)$ to the map sending $e^{2\pi i\theta} \mapsto Ae^{2\pi i\theta}/\|Ae^{2\pi i\theta}\|$. This homomorphism lifts to a homomorphism
\[
\Phi:\widetilde{\op{Sp}}(2,\R) \longrightarrow \widetilde{\op{Diff}}^+(S^1).
\]
Next, the {\bf dynamical rotation number\/} is a quasimorphism
\[
\op{rot} : \widetilde{\op{Diff}}^+(S^1)\longrightarrow \R
\]
defined as follows: An element of $\widetilde{\op{Diff}}^+(S^1)$ is equivalent to a diffeomorphism $f:\R\to\R$ such that $f(x+1)=f(x)+1$ for all $x\in\R$. We then define
\[
\op{rot}(f) = \lim_{n\to\infty}\frac{f^n(x)-x}{n}\in\R,
\]
which is independent of the choice of $x\in\R$. We now define the {\bf rotation number\/} of $\gamma$ with respect to $\tau$ by
\[
\op{rot}_\tau(\gamma) = \op{rot}(\Phi(\{A_t\}_{t\in[0,T]}))\in\R.
\]
Finally, we have
\begin{equation}
\label{eqn:defCZ}
\op{CZ}_\tau(\gamma) = \floor{\op{rot}_\tau(\gamma)} + \ceil{\op{rot}_\tau(\gamma)}.
\end{equation}

Explicitly, there are three possibilities:
\begin{itemize}
\item
If the eigenvalues of $P_\gamma$ are on the unit circle, we say that $\gamma$ is {\bf elliptic\/}. In this case, there is an irrational\footnote{Our assumption that $\lambda$ is nondegenerate implies that $\gamma$ and all of its multiple covers are nondegenerate, which implies that $\theta$ must be irrational.} number $\theta\in\R$ such that $P_\gamma$ has eigenvalues $e^{\pm 2\pi i\theta}$, and
\[
\op{rot}_\tau(\gamma)=\theta, \quad\quad\quad \op{CZ}_\tau(\gamma) = \floor{\theta}+\ceil{\theta}.
\]
\item
If the eigenvalues of $P_\gamma$ are real and positive, we say that $\gamma$ is {\bf positive hyperbolic\/}. In this case there is an integer $k$ such that
\[
\op{rot}_\tau(\gamma)=k, \quad\quad\quad \op{CZ}_\tau(\gamma)=2k.
\]
The integer $k$ is the winding number of the path $\{A_t(v)\}_{t\in[0,T]}$ where $v$ is an eigenvector of $P_\gamma$.
\item
If the eigenvalues of $P_\gamma$ are real and negative, we say that $\gamma$ is {\bf negative hyperbolic\/}. In this case there is an integer $k$ such that
\[
\op{rot}_\tau(\gamma)=k+\frac{1}{2}, \quad\quad\quad \op{CZ}_\tau(\gamma)=2k+1.
\]
If $v$ is an eigenvector of $P_\gamma$, then the path $\{A_t(v)\}_{t\in[0,2T]}$ has winding number $2k+1$.
\end{itemize}

The rotation number and Conley-Zehnder index of a Reeb orbit $\gamma$ depend only on the homotopy class of symplectic trivialization $\tau$ of $\gamma^*\xi$.

\subsection{The relative first Chern class}

Let $\alpha=\{(\alpha_i,m_i)\}$ be an orbit set in the contact three-manifold $Y$, and write
\[
[\alpha]=\sum_im_i[\alpha_i]\in H_1(Y).
\]
Assume that $\alpha$ is nullhomologous in the four-dimensional Liouville domain $X$, i.e. $[\alpha]$ is in the kernel of the map $H_1(Y)\to H_1(X)$ induced by the inclusion. Let $H_2(X,\alpha,\emptyset)$ denote\footnote{This is a special notation from the ECH literature. In terms of relative homology, if $A$ denotes the union of (the image of) the orbits $\alpha_i$ in $Y$, then $H_2(X,\alpha,\emptyset)$ is the set of relative homology classes $Z\in H_2(X,A)$ such that $\partial Z = \sum_im_i[\alpha_i] \in H_1(A)$.} the set of $2$-chains $Z$ in $X$ with $\partial Z=\sum_im_i\alpha_i$, modulo boundaries of three-chains. This is an affine space over $H_2(X)$. 

If $Z\in H_2(X,\alpha,\emptyset)$, and if $\tau$ is a (homotopy class of) symplectic trivialization of $\xi$ over the Reeb orbits $\alpha_i$, then the {\bf relative first Chern class\/} of $TX$ over $Z$ relative to $\tau$ is an integer
\[
c_\tau(Z)\in\Z
\]
defined as follows. A cobordism-compatible almost complex structure $J$ on $\overline{X}$ gives $TX$ the structure of a rank two complex vector bundle over $X$, and $\tau$ induces a trivialization of this bundle over $\alpha$. Now we can represent the class $Z$ by a map $u:\Sigma\to X$ where $\Sigma$ is a compact oriented surface with boundary. Let $\psi$ be a generic section of the complex line bundle $u^*\det(TX)$ which on $\partial\Sigma$ is nonvanishing and constant with respect to $\tau$. We then define $c_\tau(Z)=\#\psi^{-1}(0)$, where $\#$ denotes the algebraic count.

\subsection{The Fredholm index}
\label{sec:fredholmindex}

Consider an almost complex structure $J\in\mathcal{J}(\overline{X})$. Each $J$-holomorphic curve
\[
u:\Sigma\longrightarrow \overline{X}
\]
in $\mathcal{M}^J(\overline{X})$ has an associated orbit set, obtained by taking the (possibly multiply covered) Reeb orbits associated to punctures of $\Sigma$ and summing their covering multiplicities. For example, if $\Sigma$ has two punctures, one of which is asymptotic to a simple Reeb orbit $\gamma$, and the other of which is asymptotic to a double cover of $\gamma$, then the associated orbit set is $\{(\gamma,3)\}$. We call this the {\bf asymptotic orbit set\/} of $u$.

If $\alpha$ is an orbit set, let $\mathcal{M}^J(\overline{X},\alpha)$ denote the set of $u\in\mathcal{M}^J(\overline{X})$ with asymptotic orbit set $\alpha$. Each $u\in\mathcal{M}^J(\overline{X},\alpha)$ determines a relative homology class
\[
[u]\in H_2(X,\alpha,\emptyset).
\]
If $\tau$ is a symplectic trivialization of $\xi$ over the Reeb orbits in $\alpha$, we let
\[
c_\tau(u) = c_\tau([u])\in\Z
\]
denote the relative first Chern class. If $\gamma_1,\ldots,\gamma_m$ are the (possibly multiply covered) Reeb orbits associated to the punctures of $u$, we define the {\bf Fredholm index\/}\footnote{In a $2n$-dimensional Liouville domain, the term $-\chi(\Sigma)$ is replaced by $(n-3)\chi(\Sigma)$.}
\[
\op{ind}(u) = -\chi(\Sigma) + 2c_\tau(u) + \sum_{i=1}^m\op{CZ}_\tau(\gamma_i).
\]
The Fredholm index does not depend on the choice of trivialization $\tau$.

Suppose that the domain $\Sigma$ is connected. We say that $u$ is {\bf multiply covered\/} if it factors through a branched cover $\Sigma\to\Sigma'$ of degree greater than one. Otherwise we say that $u$ is {\bf simple\/}. It is shown in \cite{wendl} that if $u$ is simple, then it is an embedding except possibly for finitely many singular points and/or self-intersections. The same conclusion (embedded except for finitely many points) holds when the domain of $u$ is disconnected, as long as the restriction of $u$ to each component is simple, and no two components have the same image. In this case we say that $u$ has {\bf no multiply covered components\/}.

If $J$ is generic, then for every simple $J$-holomorphic curve $u$, the moduli space $\mathcal{M}^J(\overline{X})$ is naturally a smooth manifold near $u$ of dimension $\op{ind}(u)$; see \cite{dragnev,wendl}.

\subsection{The ECH index}

Let $\alpha=\{(\alpha_i,m_i)\}$ be an orbit set in $Y$ which is nullhomologous in $X$, and let $Z\in H_2(X,\alpha,\emptyset)$. We define the {\bf ECH index\/}
\begin{equation}
\label{eqn:ECHindex}
I(Z) = c_\tau(Z) + Q_\tau(Z) + \sum_i\sum_{k=1}^{m_i}\op{CZ}_\tau(\alpha_i^k).
\end{equation}
Here $\tau$ is a homotopy class of symplectic trivialization of the contact structure $\xi$ over the Reeb orbits $\alpha_i$, and $\alpha_i^k$ denotes the Reeb orbit which is a $k$-fold cover of the simple Reeb orbit $\alpha_i$. The new term $Q_\tau(Z)$ is the {\bf relative intersection pairing\/} defined as follows.

Let $S\subset X$ be a smooth compact surface, embedded except at the boundary, and transverse to $Y$, representing the relative homology class $Z$. Let $S'$ be another such representative, whose interior intersects $S$ transversely at finitely many points. Then
\[
Q_\tau(Z) = \#(S\cap S') - \ell_\tau(S,S').
\]
Here $\#$ denotes the signed count of intersections, while $\ell_\tau$ is the asymptotic linking number defined in \cite[\S2.7]{ir}. It is shown in \cite{ir} that $Q_\tau(Z)$ does not depend on the choice of $S$ and $S'$, and $I(Z)$ does not depend on the choice of $\tau$.

The key fact about the ECH index is the following {\bf index inequality\/} \cite{pfh2,ir,bn}: If $u\in\mathcal{M}^J(\overline{X})$ has no multiply covered components, then\footnote{Equality holds in the index inequality only if $u$ is embedded (hence the `embedded' in embedded contact homology), and certain ``partition conditions'' hold. See e.g.\ \cite[\S3.9]{bn}.}
\begin{equation}
\label{eqn:indexinequality}
\op{ind}(u) \le I([u]).
\end{equation}

\subsection{Enhanced spectrality}

The reason we have introduced the ECH index is the following proposition. This result says more about the Spectrality property in Proposition~\ref{prop:ckaltproperties} for star-shaped domains, and leads to lower bounds on their alternative ECH capacities.

\begin{proposition}[\cite{altech}]
\label{prop:enhanced}
Suppose $X\subset\R^4$ is a nondegenerate star-shaped domain and $k$ is a nonnegative integer. Then there exists an orbit set $\alpha$ in $\partial X$ such that $c_k^{\op{Alt}}(X) = \mathcal{A}(\alpha)$ and the ECH index $I(\alpha) \ge 2k$.
\end{proposition}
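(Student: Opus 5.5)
The plan is to choose the almost complex structure and the point constraints generically, and then compare the dimension of the relevant moduli space with the number of constraints.

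Write $Y=\partial X$. By definition, $c_k^{\op{Alt}}(X)$ is a sup over $J\in\mathcal{J}(\overline{X})$ and points $x_1,\ldots,x_k\in X$ of an inf of energies. Since $X$ is nondegenerate, we may assume $c_k^{\op{Alt}}(X)<\infty$; otherwise there is nothing to prove. By Remark~\ref{rem:maxmin}, the sup and inf are then attained.

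Now take $J$ generic and the points $x_1,\ldots,x_k$ in general position, and set
\[
\mathcal{E}_0(J,x)=\min\bigl\{\mathcal{E}(u) : u\in\mathcal{M}^J(\overline{X};x_1,\ldots,x_k)\bigr\}.
\]
This quantity takes values in the discrete action spectrum. The sup over $(J,x)$ is achieved, so it is approached along a sequence of generic pairs. Discreteness of the spectrum then gives a generic pair $(J,x)$ with $\mathcal{E}_0(J,x)=c_k^{\op{Alt}}(X)$. Here one uses that a curve of energy $\le c$ persists to nearby $(J',x')$ by Gromov compactness, so the min is upper semicontinuous. Fix this pair, and let $u$ be a curve realizing the minimum, with asymptotic orbit set $\alpha$. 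Then $\mathcal{A}(\alpha)=\mathcal{E}(u)=c_k^{\op{Alt}}(X)$.

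It remains to show $I([u])\ge 2k$. Since $X\subset\R^4$ has $H_2(X)=0$, the relative class is determined by $\alpha$, so we may write $I(\alpha)$.

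\textbf{Step 1: remove multiple covers.} Replace each multiply covered component of $u$ by its underlying simple curve, and merge components with the same image. This still passes through all $x_i$. The resulting asymptotic orbit set $\alpha'$ has action $\le\mathcal{A}(\alpha)$, with equality forced by minimality only if no reduction was possible. So minimality of $u$ implies that $u$ has no multiply covered components and that no two components share an image; otherwise a curve of strictly smaller energy would pass through the same points.

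\textbf{Step 2: dimension count.} Since $J$ is generic and each component $u_j$ is simple, the moduli space near $u_j$ is a manifold of dimension $\op{ind}(u_j)$. Requiring $u_j$ to pass through $k_j$ of the generic points imposes $2k_j$ independent conditions, since each point is codimension $2$ in the $4$-manifold after accounting for the $2$-dimensional curve. Hence $\op{ind}(u_j)\ge 2k_j$. Here we arrange that each $x_i$ is assigned to one component, using genericity to exclude a point lying on two components. Summing over components gives $\op{ind}(u)\ge 2k$.

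\textbf{Step 3: index inequality.} The index inequality \eqref{eqn:indexinequality} then gives $I(\alpha)=I([u])\ge\op{ind}(u)\ge 2k$.

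\textbf{Main obstacle.} I expect the hardest step to be the genericity bookkeeping in Step 2. One must choose $J$ so that all simple curves, including the finitely many relevant moduli spaces with energy $\le c_k^{\op{Alt}}(X)$, are transversely cut out. One must also show that the evaluation map at marked points is transverse to a generic tuple $(x_1,\ldots,x_k)$, and that this choice is compatible with achieving the max in the max-min. The natural approach is a Sard–Smale argument on the universal moduli space with $k$ marked points, restricted to energies at most $c_k^{\op{Alt}}(X)$; there are only finitely many orbit sets in that range because the contact form is nondegenerate. This yields a comeager set of $(J,x)$ for which every curve through the points satisfies $\op{ind}\ge 2k$. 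Finally, one checks that sup/min over this comeager set still equals $c_k^{\op{Alt}}(X)$, by the semicontinuity argument above.
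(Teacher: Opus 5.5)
Your route is the paper's: realize the max-min at a generic pair, observe that an energy-minimizing curve has no multiply covered components, and combine the dimension count for generic point constraints with the index inequality \eqref{eqn:indexinequality}. Steps 1--3 are fine. Incidentally, assigning points to components needs no genericity: a point lying on two components can go to either one.

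The flaw is in how you obtain a generic maximizer. ``Persistence'' of low-energy curves from $(J,x)$ to nearby pairs, i.e.\ upper semicontinuity of $\mathcal{E}_0$, is not what Gromov compactness provides. It would require transversality, which is exactly what may fail at the possibly non-generic maximizer $(J_0,x^0)$. It would also not help: upper semicontinuity is consistent with $\mathcal{E}_0<c:=c_k^{\op{Alt}}(X)$ at every generic pair near $(J_0,x^0)$.

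What you need is lower semicontinuity, and that is what compactness gives. Take generic $(J_n,x^n)\to(J_0,x^0)$, and suppose $\mathcal{E}_0(J_n,x^n)<c$ for infinitely many $n$. By discreteness these values are bounded by some fixed $c'<c$. The minimizing curves $u_n$ have no multiply covered components, so their genus is controlled by the relative adjunction formula and writhe bounds, as in Lemma~\ref{lem:ckaltmonotonicity}. A subsequence then converges to a $J_0$-holomorphic building. Its level in $\overline{X}$ passes through $x^0_1,\ldots,x^0_k$, since the points stay in the compact part $X$. That level has energy $\le c'$, because the symplectization levels above it have nonnegative energy. This contradicts $\mathcal{E}_0(J_0,x^0)=c$. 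Hence $\mathcal{E}_0(J_n,x^n)=c$ for large $n$, and your Steps 1--3 apply at such a pair. This is the ``Gromov compactness argument'' in the last line of the paper's proof.

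Separately, your opening reduction is wrong. If $c_k^{\op{Alt}}(X)$ were infinite, the conclusion would be false, not vacuous, since every orbit set has finite action. So finiteness must be proved; nondegeneracy gives discreteness of the spectrum, not finiteness. Finiteness follows from $X\subset B^4(r)$, Monotonicity, and the upper bound $c_k^{\op{Alt}}(B^4(r))\le dr$. That bound comes from the $\C P^2$ half of the proof of Proposition~\ref{prop:ball}, which does not use the present proposition.
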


\begin{proof}
Given an orbit set $\alpha$, an almost complex structure $J\in\mathcal{J}(\overline{X})$, and points $x_1,\ldots,x_k$, let 
\[
\mathcal{M}^J\left(\overline{X},\alpha;x_1,\ldots,x_k\right) = \left\{u\in\mathcal{M}^J\left(\overline{X},\alpha\right) \;\big| \; x_1,\ldots,x_k \in u(\Sigma)\right\}.
\]
By Remark~\ref{rem:maxmin}, we have
\[
c_k^{\op{Alt}}(X,\omega) = \max_{\substack{J\in\mathcal{J}(\overline{X}) \\ x_1,\ldots,x_k\in X}} \min\left\{\mathcal{A}(\alpha) \;\big|\; \mathcal{M}^J(\overline{X},\alpha;x_1,\ldots,x_k)\neq \emptyset\right\}.
\]
In the above equation, the minimum must be realized by a curve with no multiply covered components, because we can replace any multiply covered components by their underlying simple curves so as to decrease energy without violating the point constraints. Thus we have
\begin{equation}
\label{eqn:maxminstar}
c_k^{\op{Alt}}(X,\omega) = \max_{\substack{J\in\mathcal{J}(\overline{X}) \\ x_1,\ldots,x_k\in X}} \min\left\{\mathcal{A}(\alpha) \;\big|\; \mathcal{M}^J_*(\overline{X},\alpha;x_1,\ldots,x_k)\neq \emptyset\right\},
\end{equation}
where $\mathcal{M}^J_*(\cdots)$ denotes the set of curves in $\mathcal{M}^J(\cdots)$ with no multiply covered components.

Suppose that $J$ is generic, so that holomorphic curves with no multiply covered components are cut out transversely, i.e. live in moduli spaces which are smooth manifolds with dimension equal to the Fredholm index. Suppose also that the points $x_1,\ldots,x_k$ are generic. Then the minimum in \eqref{eqn:maxminstar} must be realized by an orbit set $\alpha$ with ECH index $I(\alpha) \ge 2k$. This is because if $I(\alpha)<2k$, then by the index inequality \eqref{eqn:indexinequality}, all components of $\mathcal{M}^J_*(\overline{X},\alpha)$ have dimension less than $2k$, and so cannot include curves passing through $k$ generic points.

By a Gromov compactness argument, the maximum in \eqref{eqn:maxminstar} must be realized by generic $J,x_1,\ldots,x_k$.
\end{proof}

To make use of Proposition~\ref{prop:enhanced}, we now discuss how to compute the ECH index.

\subsection{The ECH index in star-shaped domains}

Suppose that $X\subset\R^4$ is a nondegenerate star-shaped domain with boundary $Y$. In this case the formula \eqref{eqn:ECHindex} for the ECH index can be made more explicit as follows.

To start, every orbit set $\alpha=\{(\alpha_i,m_i)\}$ is nullhomologous in both $X$ and $Y$, and the set $H_2(X,\alpha,\emptyset)$ has just one element. Write $I(\alpha)$, $c_\tau(\alpha)$, and $Q_\tau(\alpha)$ to denote $I(Z)$, $c_\tau(Z)$, and $Q_\tau(Z)$ respectively, where $Z$ is the unique element of $H_2(X,\alpha,\emptyset)$.

The relative first Chern class is linear in the sense that
\[
c_\tau(\alpha) = \sum_im_ic_\tau(\alpha_i).
\]
The relative self-intersection pairing is quadratic in the sense that
\begin{equation}
\label{eqn:qtauquadratic}
Q_\tau(\alpha) = \sum_im_i^2Q_\tau(\alpha_i) + \sum_{i\neq j}m_im_j\ell(\alpha_i,\alpha_j),
\end{equation}
where $\ell$ denotes the linking number of oriented knots in $S^3$. For an individual simple Reeb orbit $\gamma$, the number $Q_\tau(\gamma)$ is the linking number of $\gamma$ with a pushoff of $\gamma$ via the framing determined by $\tau$. An equivalent statement is that
\[
Q_\tau(\gamma) - c_\tau(\gamma) = \op{sl}(\gamma)
\]
where $\op{sl}$ denotes the self-linking number. Here if $K$ is any knot in $S^3$ transverse to the contact structure $\xi$, then $\op{sl}(K)$ is defined as follows: Let $\Sigma$ be a Seifert surface for $K$, and let $V$ be a nonvanishing section of $\xi$ over $\Sigma$. Then $\op{sl}(K) = \ell(K,K')$, where $K'$ is a pushoff of $K$ in the direction $V$.

Combining the above three equations with equation \eqref{eqn:ECHindex}, we get
\begin{equation}
\label{eqn:Isimplifiedless}
I(\alpha) = \sum_i(m_i^2+m_i)c_\tau(\alpha_i) + \sum_im_i^2\op{sl}(\alpha_i) + \sum_{i\neq j}m_im_j\ell(\alpha_i,\alpha_j) + \sum_i\sum_{k=1}^{m_i}\op{CZ}_\tau(\alpha_i^k).
\end{equation}
We can simplify the above formula as follows. There is a unique homotopy class of global trivialization $\tau$ of $\xi$ over $Y$. With this trivialization, all relative first Chern classes are zero. Let $\op{CZ}$ denote the Conley-Zehnder index with respect to the global trivialization. Then we get
\begin{equation}
\label{eqn:Isimplifiedmore}
I(\alpha) = \sum_im_i^2\op{sl}(\alpha_i) + \sum_{i\neq j}m_im_j\ell(\alpha_i,\alpha_j) + \sum_i\sum_{k=1}^{m_i}\op{CZ}(\alpha_i^k).
\end{equation}

\subsection{The ECH index in an ellipsoid}

We now consider the case where $X$ is an ellipsoid $E(a,b)$. We assume that $a/b$ is irrational, which is equivalent to nondegeneracy of the contact form on the boundary.

Recall from Example~\ref{ex:ellipsoid} that there are two simple Reeb orbits $\gamma_1$ and $\gamma_2$, which have symplectic actions $a$ and $b$ respectively. Thus the symplectic actions of orbit sets consist of the numbers $N_k(a,b)$ defined in \S\ref{sec:propspec}.

\begin{lemma}
\label{lem:ellipsoidIA}
\cite[\S3.7]{bn}
Let $a,b$ be positive real numbers with $a/b$ irrational, and let $\alpha$ be an orbit set in $\partial E(a,b)$. Then
\[
I(\alpha) = 2k \Longleftrightarrow \mathcal{A}(\alpha) = N_k(a,b).
\]
\end{lemma}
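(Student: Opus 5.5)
The plan is to compute $I(\alpha)$ for $\alpha=\{(\gamma_1,m),(\gamma_2,n)\}$ in closed form using \eqref{eqn:Isimplifiedmore}, and then compare it with a lattice point count. Assume without loss of generality that $a<b$.

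\textbf{Step 1: topological and dynamical data.} We need the following inputs.
\begin{itemize}
\item Each $\gamma_i$ is an unknot in $S^3$, and $\gamma_1,\gamma_2$ form a Hopf link, so $\ell(\gamma_1,\gamma_2)=1$.
\item Each $\gamma_i$ has self-linking number $\op{sl}(\gamma_i)=-1$. This holds because each bounds a disk (the other coordinate disk, e.g.\ $\{z_2=0\}\cap E$ for $\gamma_1$) transverse to the Reeb field.
\item With respect to the global trivialization of $\xi$, the linearized flow along $\gamma_1$ rotates by angle $2\pi\cdot(a/b)$ per period, up to an integer shift fixed by the global framing. Comparing the global trivialization with the trivialization coming from the disk (which differ by the self-linking), I expect $\op{rot}(\gamma_1)=1+a/b$ and $\op{rot}(\gamma_2)=1+b/a$.
\end{itemize}
Hence, with $a/b$ irrational,
\[
\op{CZ}(\gamma_1^k)=2\lfloor k(1+a/b)\rfloor+1=2k+2\lfloor ka/b\rfloor+1,
\]
and similarly for $\gamma_2^k$. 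The normalization can be checked on the ball: the smallest orbit should have $\op{CZ}=3$, matching dynamical convexity.

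\textbf{Step 2: closed formula and lattice counting.} Plugging into \eqref{eqn:Isimplifiedmore} gives
\[
I(\alpha)=-m^2-n^2+2mn+\sum_{k=1}^m\bigl(2k+1+2\lfloor ka/b\rfloor\bigr)+\sum_{k=1}^n\bigl(2k+1+2\lfloor kb/a\rfloor\bigr).
\]
This simplifies to $I(\alpha)=2\bigl(\#\{\text{lattice points in } T(L)\}-1\bigr)$, where $T(L)$ is the triangle $\{x,y\ge0,\ ax+by\le L\}$ with $L=am+bn$. To verify the simplification:
\begin{itemize}
\item Count lattice points $(x,y)$ with $ax+by\le am+bn$ by splitting according to whether $x\le m$ or $y\le n$.
\item The term $\sum_{k\le m}\lfloor ka/b\rfloor$ counts points in one sliver triangle beyond the rectangle $[0,m]\times[0,n]$, and $\sum_{k\le n}\lfloor kb/a\rfloor$ counts the other sliver.
\item The rectangle contributes $(m+1)(n+1)$ points, and the quadratic terms $m^2+m+n^2+n+2mn-m^2-n^2$ combine with it to give exactly twice the count minus $2$.
\end{itemize}
This bookkeeping is the step I expect to be the main obstacle, in particular getting the sign and indexing of the floor sums to match the sliver triangles exactly. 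I would verify it on small cases, for instance $\alpha=\emptyset$, $\alpha=\{(\gamma_1,1)\}$ giving $I=2$, and $\{(\gamma_1,2)\}$, before trusting the general identity.

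\textbf{Step 3: conclusion.} Since $a/b$ is irrational, the line $ax+by=L$ contains exactly one lattice point, namely $(m,n)$. So $T(L)$ contains exactly $j+1$ lattice points, where $j$ is the number of pairs $(m',n')\ne(m,n)$ with $am'+bn'<L$. The values $am'+bn'$ are distinct, so $N_j(a,b)=L$ exactly for this $j$. Therefore
\[
I(\alpha)=2j \quad\text{with}\quad \mathcal{A}(\alpha)=N_j(a,b).
\]
Moreover, the map $\alpha\mapsto j$ is a bijection from orbit sets to $\Z_{\ge0}$, since both $I$ and $\mathcal{A}$ are strictly increasing in the same ordering. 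This gives $I(\alpha)=2k\iff\mathcal{A}(\alpha)=N_k(a,b)$.
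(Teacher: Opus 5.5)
Your proof is correct and is essentially the paper's argument: you reach the same closed formula $I(\gamma_1^{m}\gamma_2^{n})=2\bigl(m+n+mn+\sum_{j=1}^{m}\floor{ja/b}+\sum_{j=1}^{n}\floor{jb/a}\bigr)$, whose non-floor part $2mn+2m+2n=2(m+1)(n+1)-2$ accounts for the rectangle and whose floor sums count the two slivers of $T(am+bn)$ exactly, and the only difference is that you work with the global trivialization and \eqref{eqn:Isimplifiedmore}, whereas the paper uses the coordinate-factor trivialization $\tau$ (rotation numbers $a/b$ and $b/a$, $c_\tau(\gamma_i)=1$, $Q_\tau(\gamma_i)=0$) and \eqref{eqn:Isimplifiedless}. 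Two small repairs: your guessed shift $\op{rot}(\gamma_1)=1+a/b$ follows rigorously from invariance of the Fredholm index, which gives $\op{CZ}(\gamma_1^k)=\op{CZ}_\tau(\gamma_1^k)+2k\,c_\tau(\gamma_1)$, and the disk you use to get $\op{sl}(\gamma_1)=-1$ must lie in $\partial E(a,b)$, for example the open-book page $\{z_2\in\R_{\ge 0}\}\cap\partial E(a,b)$ (whose interior is transverse to $R$), not $\{z_2=0\}\cap E(a,b)$, which is not contained in $\partial E(a,b)$.
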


\begin{proof}[Proof (outline)] We proceed in two steps.

{\em Step 1.\/} We first compute the ECH index.

The simple Reeb orbits $\gamma_1$ and $\gamma_2$ have (self-)linking numbers given by
\[
\op{sl}(\gamma_1) = -1, \quad\quad\quad \ell(\gamma_1,\gamma_2) = 1, \quad\quad\quad \op{sl}(\gamma_2) = -1.
\]

By equation \eqref{eqn:ellipsoidReeb}, the Reeb orbits $\gamma_1$ and $\gamma_2$ are elliptic. The bundle $\xi|_{\gamma_1}$ is naturally identified with the second $\C$ factor in $\C^2$, and this determines a trivialization $\tau$ with respect to which $\gamma_1$ has rotation number $a/b$. Likewise, $\xi_{\gamma_2}$ is naturally identified with the first $\C$ factor in $\C^2$, and this determines a trivialization $\tau$ with respect to which $\gamma_2$ has rotation number $b/a$. Thus with this trivialization we have
\[
\op{CZ}_\tau(\gamma_1^j) = 2\floor{\frac{ja}{b}}+1, \quad\quad\quad \op{CZ}_\tau(\gamma_2^j) = 2\floor{\frac{jb}{a}}+1.
\]
Note that this trivialization $\tau$ is not the restriction of the global trivialization. Rather we have
\[
c_\tau(\gamma_i) = 1, \quad\quad\quad Q_\tau(\gamma_i) = 0.
\]

We can write an orbit set using the multiplicative notation $\alpha = \gamma_1^{m_1}\gamma_2^{m_2}$. This means that when $m_1>0$ we include the pair $(\gamma_1,m_1)$, and when $m_2>0$ we include the pair $(\gamma_2,m_2)$. Putting the above calculations into equation \eqref{eqn:Isimplifiedless}, we get
\begin{equation}
\label{eqn:ellipsoidI}
I\left(\gamma_1^{m_1}\gamma_2^{m_2}\right) = 2\left(m_1+m_2+m_1m_2 + \sum_{j=1}^{m_1}\floor{\frac{ja}{b}} + \sum_{j=1}^{m_2}\floor{\frac{jb}{a}}\right).
\end{equation}

{\em Step 2.\/} We now read off the conclusion of the lemma from equation \eqref{eqn:ellipsoidI}.

Let $T$ be the triangle in the plane whose edges are segments of the $x$ axis, the $y$ axis, and the line through the point $(m_1,m_2)$ with slope $-a/b$. Let $k$ denote the number of lattice points enclosed by this triangle, including lattice points on the boundary, minus one. By dividing this triangle into a rectangle with corners at $(0,0), (m_1,0), (0,m_2), (m_1,m_2)$ and two smaller triangles, we see from equation \eqref{eqn:ellipsoidI} that
\[
I\left(\gamma_1^{m_1}\gamma_2^{m_2}\right) = 2k.
\]
On the other hand, as in Example~\ref{ex:ellipsoidweyl}, we have
\[
\mathcal{A}\left(\gamma_1^{m_1}\gamma_2^{m_2}\right) = N_k(a,b).
\]
\end{proof}

\begin{corollary}
\label{cor:ckaltellipsoidlb}
If $a,b>0$, then
\begin{equation}
\label{eqn:ckaltellipsoidlb}
c_k^{\op{Alt}}(E(a,b)) \ge N_k(a,b).
\end{equation}
\end{corollary}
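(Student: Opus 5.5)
First I would treat the nondegenerate case, where $a/b$ is irrational. Here $E(a,b)$ is a nondegenerate star-shaped domain, so Proposition~\ref{prop:enhanced} gives an orbit set $\alpha$ in $\partial E(a,b)$ with $c_k^{\op{Alt}}(E(a,b))=\mathcal{A}(\alpha)$ and $I(\alpha)\ge 2k$. Every orbit set in $\partial E(a,b)$ has the form $\gamma_1^{m_1}\gamma_2^{m_2}$, and its ECH index is even by \eqref{eqn:ellipsoidI}. So $I(\alpha)=2k'$ for some $k'\ge k$. Lemma~\ref{lem:ellipsoidIA} then gives $\mathcal{A}(\alpha)=N_{k'}(a,b)$. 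Since the sequence $N_\bullet(a,b)$ is nondecreasing, $N_{k'}(a,b)\ge N_k(a,b)$, and \eqref{eqn:ckaltellipsoidlb} follows.

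The one point that needs care is that the map $\alpha\mapsto I(\alpha)/2$ is a bijection onto $\Z_{\ge 0}$ for which the actions are listed in nondecreasing order. This is exactly the content of the lattice-point description in Step~2 of the proof of Lemma~\ref{lem:ellipsoidIA}. When $a/b$ is irrational the values $am_1+bm_2$ are distinct, so $N_k$ is strictly increasing. The orbit set with index $2k'$ is then the $k'$-th smallest action, and the inequality $N_{k'}\ge N_k$ is immediate.

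Next I would handle rational $a/b$ (the degenerate case) by approximation. For $\epsilon>0$, choose $a'\le a$ and $b'\le b$ with $a'/b'$ irrational and $|a-a'|,|b-b'|<\epsilon$. Then $E(a',b')\subset E(a,b)$, so the Monotonicity property in Proposition~\ref{prop:ckaltproperties} gives
\[
c_k^{\op{Alt}}(E(a,b))\ge c_k^{\op{Alt}}(E(a',b'))\ge N_k(a',b').
\]
Now $N_k(a',b')$ is continuous in $(a',b')$: it is the $k$-th smallest value of $am+bn$, and only finitely many pairs $(m,n)$ are relevant in a neighborhood. Letting $\epsilon\to 0$ therefore gives $N_k(a',b')\to N_k(a,b)$, which completes the argument.

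I expect the main obstacle to be only this bookkeeping. The substantive inputs, the index inequality inside Proposition~\ref{prop:enhanced} and the index computation in Lemma~\ref{lem:ellipsoidIA}, are already available.
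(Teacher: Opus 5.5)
Your proposal is correct and takes the same approach as the paper. The irrational case uses Proposition~\ref{prop:enhanced} together with Lemma~\ref{lem:ellipsoidIA}; you helpfully make explicit that $I(\alpha)=2k'$ with $k'\ge k$ and that $N_\bullet(a,b)$ is nondecreasing. The rational case approximates by irrational ellipsoids $E(a',b')\subset E(a,b)$ and applies Monotonicity together with continuity of $N_k$.
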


\begin{proof}
If $a/b$ is irrational, then this follows from Proposition~\ref{prop:enhanced} and Lemma~\ref{lem:ellipsoidIA}. If $a/b$ is rational, then approximate $E(a,b)$ by irrational ellipsoids inside it, and use the irrational case and the Monotonicity property in Proposition~\ref{prop:ckaltproperties}.
\end{proof}

We will see in \S\ref{sec:ckaltconvex} below that the inequality \eqref{eqn:ckaltellipsoidlb} is in fact an equality.

%%%%%%%%%%%%%%%%%%%%%%%%%%%%%%%%%%%%%%%%%

\subsection{Alternative ECH capacities of a ball}
\label{sec:ball}

We can now prove:

\begin{proposition}
\label{prop:ball}
The alternative ECH capacities of a ball are given by
\[
c_k^{\op{Alt}}\left(B^4(a)\right) = da
\]
where $d$ is the unique nonnegative integer satisfying \eqref{eqn:uniqueinteger}.
\end{proposition}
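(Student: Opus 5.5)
Two inequalities are needed: a lower bound from Corollary~\ref{cor:ckaltellipsoidlb} and an upper bound from holomorphic curves in $\C P^2$ combined with neck stretching.

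\emph{Lower bound.} Since $B^4(a)=E(a,a)$, Corollary~\ref{cor:ckaltellipsoidlb} gives $c_k^{\op{Alt}}(B^4(a))\ge N_k(a,a)$. The multiset $\{am+an\}$ contains the value $da$ exactly $d+1$ times, so the number of entries $\le (d-1)a$ is $d(d+1)/2$ and the number $\le da$ is $(d+1)(d+2)/2$. Hence $N_k(a,a)=da$ exactly when $d(d+1)/2\le k\le d(d+3)/2$, which is \eqref{eqn:uniqueinteger}. The irrational approximation is already built into that corollary, so nothing more is needed here.

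\emph{Upper bound.} Fix $d$ with $d^2+d\le 2k\le d^2+3d$. By Monotonicity and $C^0$-Continuity (or Definition~\ref{def:ckaltextended}), it suffices to show $c_k^{\op{Alt}}(B^4(a))\le da+\epsilon$. Embed $B^4(a)$ symplectically into $\C P^2$ with the Fubini--Study form scaled so that a line has area $a+\epsilon$; this is possible because $\C P^2$ minus a line is the open ball of that capacity. Rather than the ball itself, I would use a slightly shrunken irrational ellipsoid $E\subset B^4(a)$. It is nondegenerate, and by Monotonicity it suffices to bound $c_k^{\op{Alt}}(E)$ with $E$ arbitrarily close to $B^4(a)$, so I will write $B$ for it.

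Now fix $J\in\mathcal{J}(\overline{B})$ and points $x_1,\dots,x_k\in B$. Choose almost complex structures $J_R$ on $\C P^2$ that stretch the neck along $\partial B$ and agree with $J$ on $B$ and on a long collar $[0,R]\times\partial B$. Since $k\le d(d+3)/2$, which is the dimension of the space of degree $d$ curves, for every compatible $J_R$ there is a $J_R$-holomorphic degree $d$ curve, possibly reducible or nodal, through $x_1,\dots,x_k$. This is the Gromov--Witten invariant count of degree $d$ curves (of genus $(d-1)(d-2)/2$, or equivalently a nonvanishing genus $0$ count through $3d-1$ points, after padding with constraints), and I would use whichever nonvanishing invariant yields curves through $k$ generic points; the $x_i$ can be perturbed to be generic and then a limit taken. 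Each such curve has symplectic area $d(a+\epsilon)$. Letting $R\to\infty$ and applying SFT/Gromov compactness as in the proof of Lemma~\ref{lem:ckaltmonotonicity}, with genus controlled via the adjunction formula, the limit building has a bottom level in $\overline{B}$ that is $J$-holomorphic, passes through all the $x_i$, and has energy at most $d(a+\epsilon)$. That energy is the action of its asymptotic orbit set, bounded by the total area by Stokes. Hence $\inf\mathcal{E}\le d(a+\epsilon)$ for every $(J,x)$, which gives $c_k^{\op{Alt}}\le d(a+\epsilon)$. Letting $\epsilon\to0$ finishes the argument.

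\emph{Main obstacle.} The hard part is the compactness step. One must show that the bottom-level limit is a genuine element of $\mathcal{M}^J(\overline{B})$: it must have finite topology, contain the point constraints, and have no components lost to constant pieces. One must also justify that its energy is at most the area of the curve in $\C P^2$. The first issue I would handle with the same adjunction and writhe bounds cited in Lemma~\ref{lem:ckaltmonotonicity}. The second follows because the upper levels and the part of the curve outside $B$ carry nonnegative $\omega$-energy.
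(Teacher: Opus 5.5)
Your proof is correct, and it is essentially the paper's argument: the lower bound is identical, and the upper bound uses the same curves, namely Gromov's degree $d$ curves through $k\le d(d+3)/2$ points in $\C P^2$. The one real difference is that you re-run the neck-stretching by hand instead of invoking Lemma~\ref{lem:ckaltmonotonicity}.

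The paper takes a shortcut. $\C P^2(a')$ is closed, hence admissible, so $c_k^{\op{Alt}}(\C P^2(a'))$ is computed directly from Definition~\ref{def:ckalt}, and a degree $d$ curve has energy $\int u^*\omega = da'$. Gromov's existence theorem, plus Gromov compactness for nongeneric points, then gives $c_k^{\op{Alt}}(\C P^2(a'))\le da'$ at once. Monotonicity applied to $B^4(a)\hookrightarrow \C P^2(a')$ for $a'>a$ finishes the proof.

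Everything in your ``main obstacle'' paragraph is exactly what Lemma~\ref{lem:ckaltmonotonicity} already proves. That includes the finite topology of the bottom level, keeping the point constraints, genus control, and the energy bound through the upper levels. Your irrational-ellipsoid approximation is also unnecessary, because Definition~\ref{def:ckaltextended} already handles the degenerate ball.

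Two small slips:
\begin{itemize}
\item A genus zero count is not equivalent. Rational degree $d$ curves pass through only $3d-1<d(d+3)/2$ generic points when $d\ge 3$. You need curves of genus $(d-1)(d-2)/2$, which is what Gromov's result supplies.
\item Bounding a shrunken $E\subset B^4(a)$ gives the wrong direction under plain Monotonicity. Either add Conformality, or take an irrational $E(a+\eta,a+\eta')\supset B^4(a)$ with $\max(\eta,\eta')<\epsilon$. That ellipsoid still lies in $\op{int}B^4(a+\epsilon)\cong \C P^2(a+\epsilon)$ minus a line.
\end{itemize}
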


\begin{proof}
The inequality $c_k^{\op{Alt}}(B^4(a)) \ge da$ follows from Corollary~\ref{cor:ckaltellipsoidlb}, because $da=N_k(a,a)$.

To prove the reverse inequality, let $\C P^2(a)$ denote $\C P^2$ equipped with the Fubini-Study symplectic form, scaled so that a line has symplectic area $a$. If $a<a'$ then there is a symplectic embedding $B^4(a) \hookrightarrow \C P^2(a')$. So by the Monotonicity property in Proposition~\ref{prop:ckaltproperties}, it is enough to show that
\[
c_k^{\op{Alt}}(\C P^2(a)) \le da.
\]
To do so, by the definition of $c_k^{\op{Alt}}$, it is enough to show that for any $J\in\mathcal{J}(\C P^2)$, and for any $k$ points $x_1,\ldots,x_k\in \C P^2$ with $2k \le d^2+3d$, there exists a $J$-holomorphic curve (possibly with disconnected domain) of degree $d$ passing through the points $x_1,\ldots,x_k$. For a given $J$, for generic $x_1,\ldots,x_k$, this was proved by Gromov \cite[\S0.2.B]{gromov}. The general case then follows from Gromov compactness.
\end{proof}

%%%%%%%%%%%%%%%%%%%%%%%%%%%%%%%%%%

\subsection{Alternative ECH capacities of convex toric domains}
\label{sec:ckaltconvex}

We now outline how to compute the alternative ECH capacities for a larger class of examples.

Let $\Omega$ be a domain in $\R^2_{\ge 0}$. We define the {\bf toric domain\/}
\[
X_\Omega = \left\{z\in\C^2\;\big|\; \left(\pi|z_1|^2, \pi|z_2|^2\right) \in \Omega\right\}.
\]
We say that $X_\Omega$ is a {\bf convex toric domain\/} if $\Omega$ is compact and the set
\begin{equation}
\label{eqn:widehatomega}
\widehat{\Omega} = \left\{\mu\in\R^2\;\big|\;\left(|\mu_1|,|\mu_2|\right)\in\Omega\right\}
\end{equation}
is convex\footnote{A ``convex toric domain'' is itself a convex domain in $\R^4$. However a toric domain that is convex in $\R^4$ might not be a ``convex toric domain''. See \cite[\S2]{ghr} for explanation.} and contains $0$ in its interior. For example, if $a,b>0$ and if $\Omega$ is the triangular region with vertices $(0,0)$, $(a,0)$, and $(0,b)$, then $X_\Omega$ is the ellipsoid $E(a,b)$, which is a convex toric domain.

We now state a formula for the alternative ECH capacities of a convex toric domain. Let $\|\cdot\|_\Omega^*$ denote the dual norm of the norm on $\R^2$ for which $\widehat{\Omega}$ is the unit ball. Explicitly, if $v\in \R^2$, then
\[
\|v\|_\Omega^* = \max\left\{\langle v,w\rangle \;\big|\; w\in\widehat{\Omega}\right\}.
\]
Define a {\bf convex integral path\/} to be a path $\Lambda$ in $\R^2_{\ge 0}$ from $(0,b)$ to $(a,0)$ for some $a,b \in \Z_{\ge 0}$, which consists of line segments between lattice points, such that each edge has slope in the interval $[-\infty,0]$, and the curve consisting of $\Lambda$ together with the line segments from the origin to $(a,0)$ and $(0,b)$ is the boundary of a convex region. If $\Lambda$ is a convex integral path, let $\op{Edges}(\Lambda)$ denote the set of its edge vectors. Define the {\bf $\Omega$-length\/} of $\Lambda$ by
\begin{equation}
\label{eqn:omegalength}
\ell_\Omega(\Lambda) = \sum_{v\in\op{Edges}(\Lambda)}\|v^\perp\|_\Omega^*,
\end{equation}
where $v^\perp$ denotes a 90 degree rotation of $v$. Finally, define $\mathcal{L}(\Lambda)$ to be the number of lattice points in the region enclosed by $\Lambda$ and the axes, including lattice points on the boundary.

\begin{theorem}[\cite{altech}]
\label{thm:ckaltconvex}
If $X_\Omega\subset\R^4$ is a convex toric domain, and $k$ is a nonnegative integer, then
\begin{equation}
\label{eqn:ckaltconvex}
c_k^{\op{Alt}}(X_\Omega) = \min\left\{\ell_\Omega(\Lambda) \;\big|\; \mathcal{L}(\Lambda) = k+1\right\},
\end{equation}
where the minimum is over convex integral paths $\Lambda$.
\end{theorem}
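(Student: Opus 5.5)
The proof splits into a lower bound $c_k^{\op{Alt}}(X_\Omega)\ge\min\{\ell_\Omega(\Lambda)\mid \mathcal{L}(\Lambda)=k+1\}$ and a matching upper bound, following the template used above for the ball and ellipsoid.

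For the lower bound, I would first reduce to a nondegenerate model by approximation. Monotonicity and $C^0$-Continuity in Proposition~\ref{prop:ckaltproperties} let me replace $X_\Omega$ by a slightly smaller convex toric domain. I would take one whose boundary is a perturbation of a domain with $\Omega$ polygonal with irrational edge slopes, as in \cite{bn}. On $\partial X_\Omega$ the Reeb orbits then come in Morse--Bott circle families, one for each rational slope direction of $\partial\Omega$, plus the two axis orbits. After perturbation, each such family gives one elliptic and one hyperbolic orbit. Orbit sets of action below a fixed bound then correspond, up to small error, to convex generator paths: convex integral paths whose edges are labeled $e$ or $h$, with action approximately $\ell_\Omega(\Lambda)$. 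This is because the action of an orbit on the torus where the outward normal is parallel to $\nu$ is $\|\nu\|_\Omega^*$ with $\nu$ the rotated edge vector.

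The key computation is the ECH index formula for such generators, adapting the ellipsoid Step~1 computation in Lemma~\ref{lem:ellipsoidIA} using \eqref{eqn:Isimplifiedmore}. The expected answer is $I(\Lambda)=2(\mathcal{L}(\Lambda)-1)-h(\Lambda)$, where $h$ is the number of $h$-labeled edges. I would verify this by cutting the region under $\Lambda$ into triangles and applying Pick's formula, exactly as in Step~2 of that lemma. Proposition~\ref{prop:enhanced} then yields an orbit set with $c_k^{\op{Alt}}=\mathcal{A}(\alpha)$ and $I(\alpha)\ge 2k$. Hence $\mathcal{L}(\Lambda)\ge k+1$, and removing lattice points monotonically, as justified below, gives $\mathcal{A}(\alpha)\gtrsim\min\{\ell_\Omega(\Lambda')\mid \mathcal{L}(\Lambda')=k+1\}$. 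Letting the perturbation go to zero gives the lower bound.

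The monotonicity of this minimum in $k$ still needs an argument. I would show that any convex path enclosing more than $k+1$ points can be shrunk, by cutting off a corner, to one enclosing exactly $k+1$ points without increasing $\ell_\Omega$. This holds because the dual norm satisfies the triangle inequality.

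For the upper bound, the ball case used curves in $\C P^2$. For general $\Omega$, I would instead symplectically embed $X_\Omega$ into a closed toric manifold, or more practically use a packing argument. The cleaner route is the one in \cite{altech}: show $c_k^{\op{Alt}}(X_\Omega)\le\ell_\Omega(\Lambda)$ for each $\Lambda$ with $\mathcal{L}(\Lambda)=k+1$. To do this, embed $X_\Omega$ into a closed toric four-manifold $X_P$ whose moment polygon $P$ is obtained by slightly enlarging $\Omega$, with edges cut parallel to those of $\Lambda^\perp$. In $X_P$, holomorphic curves in the class dual to the polygon of $\Lambda$ exist through any $k$ points, since their index is $2(\mathcal{L}(\Lambda)-1)$ and their Gromov--Witten-type count is nonzero. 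Such curves have symplectic area approximately $\ell_\Omega(\Lambda)$. Monotonicity then finishes. The main obstacle is this existence statement for arbitrary $J$ and arbitrary points. I would first try to obtain it by degenerating to a toric/tropical count, or via neck-stretching from $\C P^2$ and blowups, then use Gromov compactness to pass from generic to arbitrary data, as in Proposition~\ref{prop:ball}.
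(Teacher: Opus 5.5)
\textbf{Lower bound.} Your lower bound is essentially the paper's Steps~1--2. It uses a nondegenerate toric approximation with orbits $e_{a,b},h_{a,b}$ and computes the index from $2\op{Area}(P_\Lambda)$, the intercepts $a(\Lambda)+b(\Lambda)$ and Pick's theorem. It then applies Proposition~\ref{prop:enhanced}, $C^0$-Continuity and corner rounding. Only the inequality $I(\alpha)\le 2(\mathcal{L}(\Lambda)-1)$ of \eqref{eqn:ILambda} is needed. Your exact formula, with $h(\Lambda)$ the number of $h$-labeled edges, holds for ECH generators. The orbit set supplied by Proposition~\ref{prop:enhanced} may, however, contain $h_{a,b}^m$ with $m>1$, or both $e_{a,b}$ and $h_{a,b}$. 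In that case the defect is the total multiplicity of hyperbolic orbits, and the inequality still holds.

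\textbf{Upper bound: the gap.} Everything rests on the claim that for \emph{every} $\omega$-compatible $J$ on the closed toric surface $X_P$, and \emph{every} $k$ points, there is a $J$-curve through them of area at most $\omega(A)\approx\ell_\Omega(\Lambda)$. You assert that a ``Gromov--Witten-type count'' is nonzero and then flag this yourself as the main obstacle. As written, the upper bound is therefore not proved.

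This step is not a formality. After the corner cuts needed to make $P$ Delzant, $X_P$ typically contains toric divisors of self-intersection $\le -2$. These are curves of negative index for the integrable $J_0$, so $J_0$ is not generic. Showing that the count is invariant between (a perturbation of) $J_0$ and an arbitrary $J$ is exactly the hard point, since multiple covers and such bubbles must be controlled. Also, the curves you need have genus equal to the number of interior lattice points of $P_\Lambda$ and may be disconnected. Genus-zero Gromov--Witten invariants with $k$ point constraints are therefore the wrong count.

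\textbf{What the paper does instead.} The paper never produces curves outside $\C P^2$. It uses the packing argument you mention only in passing. The weight expansion $(a;a_1,a_2,\ldots)$ of \cite{cgconcaveconvex} gives an embedding $\op{int}(X_\Omega)\sqcup\coprod_i\op{int}(B^4(a_i))\hookrightarrow B^4(a)$. Disjoint Union, Monotonicity and the Ball property then yield $c_k^{\op{Alt}}(X_\Omega)\le c_l^{\op{Alt}}(B^4(a))-\sum_i c_{k_i}^{\op{Alt}}(B^4(a_i))$ whenever $k+\sum_i k_i=l$. A purely combinatorial identity from \cite[Appendix]{cgconcaveconvex} identifies the infimum of the right side with the minimum in \eqref{eqn:ckaltconvex}. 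So the only curve-existence input is Gromov's theorem in $\C P^2$.

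\textbf{How your route could be completed.} One way is with Taubes' Gromov invariant $\op{Gr}(A)$, which counts possibly disconnected embedded curves and is independent of generic $J$.
\begin{itemize}
\item Choose the edges of $P$ parallel to the edges of $\Lambda$ (outward normals along the $v^\perp$), not to those of $\Lambda^\perp$, so that $A$ is nef with $h^0=\mathcal{L}(\Lambda)=k+1$.
\item On a toric surface $\op{Pic}=H^2(X_P;\Z)$, so every $J_0$-curve in class $A$ is a member of $|A|\cong\C P^k$.
\item Through $k$ generic points there is exactly one member, smooth by Bertini. It is cut out transversely: its point-constrained deformations are sections of $\mathcal{O}(A)$ vanishing at the points, taken modulo the defining section, and this quotient is zero.
\item Hence $\op{Gr}(A)=1$, and Gromov compactness passes to arbitrary $J$ and points.
\end{itemize}
This route gives a direct geometric explanation of the formula, via one curve class of area $\ell_\Omega(\Lambda)$, and avoids the weight-expansion combinatorics. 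The price is Taubes-type machinery that the paper's reduction to balls does not need.
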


Here is an example:

\begin{corollary}
\label{cor:ckaltellipsoid}
If $a,b>0$ then $c_k^{\op{Alt}}(E(a,b)) = N_k(a,b)$.
\end{corollary}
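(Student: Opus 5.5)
The plan is to deduce the corollary from Theorem~\ref{thm:ckaltconvex} by evaluating the right hand side of \eqref{eqn:ckaltconvex} in the case where $\Omega$ is the triangle with vertices $(0,0)$, $(a,0)$, $(0,b)$. I will show that this minimum is at least $N_k(a,b)$ and at most $N_k(a,b)$. Actually the lower bound is already Corollary~\ref{cor:ckaltellipsoidlb}, so only the upper bound $\min\{\ell_\Omega(\Lambda)\mid\mathcal{L}(\Lambda)=k+1\}\le N_k(a,b)$ is genuinely needed. (Alternatively, the upper bound can be obtained from the combinatorics alone.)

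First I compute the dual norm. Here $\widehat\Omega$ is the diamond with vertices $(\pm a,0)$ and $(0,\pm b)$, so
\[
\|(x,y)\|^*_\Omega=\max(a|x|,b|y|).
\]
For an edge vector $v=(p,-q)$ of a convex integral path, with $p,q\ge0$, we have $v^\perp=(q,p)$ up to sign. Hence $\|v^\perp\|_\Omega^*=\max(aq,bp)$.

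The key observation is that the $\Omega$-length of $\Lambda$ equals $\max_{w\in\Lambda}(a w_1+b w_2)$ in suitable coordinates. More concretely, I would bound it by the following path. For a lattice point $(m_1,m_2)$, consider the triangle $T$ cut out by the line of slope $-a/b$ through $(m_1,m_2)$, as in Lemma~\ref{lem:ellipsoidIA}. Let $\Lambda$ be the boundary of the convex hull of the lattice points in $T$, excluding the axis segments. Each edge of this hull has length $\max(aq,bp)$ in the dual norm, and this is at most the amount the edge contributes to $\langle (a,b)\text{-support}\rangle$. Summing telescopically over edges then bounds $\ell_\Omega(\Lambda)$ by $am_1+bm_2$, the support value of the line.

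To make this clean, I will use the standard identity: for a convex lattice polygon region $P$ enclosed by $\Lambda$ and the axes, $\ell_\Omega(\Lambda)$ equals the $\Omega$-perimeter. By Minkowski-type reasoning this is at most the $\Omega$-perimeter of any larger convex region of the same shape, here the triangle $T$. For $T$ the perimeter is exactly $am_1+bm_2$, since only the hypotenuse counts, and its dual norm is $am_1+bm_2$. Monotonicity of $\Omega$-length under inclusion of convex regions containing the origin corner is the main point to check; I would verify it by noting that $\ell_\Omega$ is a mixed-area-like functional, $\ell_\Omega(\Lambda)=2\,\mathrm{MixedArea}(P,\Omega')$ for a rotated copy $\Omega'$, which is monotone.

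Choosing $(m_1,m_2)$ with $am_1+bm_2=N_k(a,b)$, the triangle $T$ contains at least $k+1$ lattice points. The hull path may enclose more than $k+1$ points. I expect this to be the main obstacle: $\mathcal{L}(\Lambda)=k+1$ is required exactly, not $\ge$. I would handle it by removing lattice points one at a time from the hull, at a vertex, which shrinks the region and so cannot increase $\ell_\Omega$ by monotonicity, until exactly $k+1$ points remain. Alternatively, I would note that the minimum in \eqref{eqn:ckaltconvex} is nondecreasing in $k$, which follows from the same vertex-removal argument. This gives $c_k^{\op{Alt}}(E(a,b))\le N_k(a,b)$, and combined with Corollary~\ref{cor:ckaltellipsoidlb} this proves the corollary.
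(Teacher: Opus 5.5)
Your overall plan matches the paper's. The lower bound is Corollary~\ref{cor:ckaltellipsoidlb}, and the upper bound comes from Theorem~\ref{thm:ckaltconvex} applied to the maximal convex integral path lying under a line. However, the step bounding $\ell_\Omega$ fails as written, because your line has the wrong slope.

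With your (correct) formula $\|v^\perp\|^*_\Omega=\max(aq,bp)$ for an edge $v=(p,-q)$, the functional that $\ell_\Omega$ measures is $bx+ay$, not $ax+by$. Let $(n,m)$ be the vertex of $\Lambda$ maximizing $bx+ay$. Edges before it have slope at least $-b/a$ and contribute $bp$; edges after it contribute $aq$. The sums telescope to $\ell_\Omega(\Lambda)=bn+am=\max_\Lambda(bx+ay)$, which is exactly the paper's identity \eqref{eqn:lomegaellipsoid}.

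The triangle $T$ of Lemma~\ref{lem:ellipsoidIA} is $\{ax+by\le L\}$. Its hypotenuse $v=(L/a,-L/b)$ has $\|v^\perp\|^*_\Omega=L\max(a/b,b/a)$, which exceeds $L$ unless $a=b$. So your claim that the $\Omega$-perimeter of $T$ is exactly $am_1+bm_2$ is false. The fix is to use the reflected triangle $\{bx+ay\le L\}$, which has the same number of lattice points under $(x,y)\mapsto(y,x)$.

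Once you actually prove the telescoping identity (you state it as the ``key observation'' but never establish it), the mixed-area monotonicity is unnecessary: any convex integral path inside $\{bx+ay\le L\}$ has $\ell_\Omega\le L$. If you do want the mixed-area route, the correct body is $\Omega$ itself, not a rotated copy. Its support function vanishes on the outward normals $(-1,0)$ and $(0,-1)$ of the axis edges, which is what makes the axis edges contribute nothing.

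Your handling of the exact count $\mathcal{L}(\Lambda)=k+1$ also needs care. If $a/b$ is irrational (the case the paper treats explicitly), all values $am+bn$ are distinct. Then the hull of the lattice points in $\{bx+ay\le N_k(a,b)\}$ encloses exactly $k+1$ lattice points, and nothing needs to be removed.

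In the rational case, removing an arbitrary vertex is not allowed. For example, if $(0,B)$ and $(1,B)$ are both enclosed, deleting $(0,B)$ creates a hull edge of positive slope, so the result is no longer a convex integral path. You must delete a componentwise-maximal point, such as an extreme lattice point on the line $bx+ay=L$.

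Simplest of all is to avoid removal entirely. By the Increasing property together with Theorem~\ref{thm:ckaltconvex}, $c_k^{\op{Alt}}(E(a,b))\le c_j^{\op{Alt}}(E(a,b))\le \ell_\Omega(\Lambda)$ for any $\Lambda$ with $\mathcal{L}(\Lambda)=j+1\ge k+1$. So a path enclosing at least $k+1$ lattice points already suffices.
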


\begin{proof}
By Corollary~\ref{cor:ckaltellipsoidlb}, we just need to show that
\begin{equation}
\label{eqn:ckaltellipsoidub}
c_k^{\op{Alt}}(E(a,b))\le N_k(a,b).
\end{equation}
For simplicity suppose that $a/b$ is irrational; the proof in the general case is similar.

Let $\Omega$ be the triangular region with vertices $(0,0)$, $(a,0)$, and $(0,b)$, so that $E(a,b)=X_\Omega$. Observe that if $\Lambda$ is a convex integral path, then $\ell_\Omega(\Lambda)$ is computed as follows. Let $L\ge 0$ be the minimum such that every point $(x,y)$ on $\Lambda$ satisfies $bx+ay\le L$. In particular the line $bx+ay=L$ is a tangent line to the region enclosed by $\Lambda$ and the axes. Let $(n,m)\in \Z^2_{\ge 0}$ be the vertex of $\Lambda$ for which $bx+ay=L$. (This vertex is unique since $a/b$ is irrational.) Then it follows from \eqref{eqn:omegalength} that
\begin{equation}
\label{eqn:lomegaellipsoid}
\ell_\Omega(\Lambda) = am+bn.
\end{equation}

Now to prove \eqref{eqn:ckaltellipsoidub}, given $k$, let $L=N_k(a,b)$. Let $\Lambda$ be the maximal convex integral path such that every point $(x,y)$ on $\Lambda$ satisfies $bx+ay\le L$. Then by equation \eqref{eqn:lomegaellipsoid}, we have $\ell_\Omega(\Lambda) = N_k(a,b)$. On the other hand, as in Example~\ref{ex:ellipsoidweyl} we have $\mathcal{L}(\Lambda) = k+1$. The inequality \eqref{eqn:ckaltellipsoidub} now follows from \eqref{eqn:ckaltconvex}.
\end{proof}

\begin{proof}[Proof of Theorem~\ref{thm:ckaltconvex}]
We proceed in three steps.

{\em Step 1.\/} We first compute the symplectic action and ECH index of orbit sets in a suitable nondegenerate perturbation of $X_\Omega$.

Fix a nonnegative integer $k$, and choose $\epsilon>0$ which is small with respect to $k$. As reviewed\footnote{The original source for this calculation is \cite{beyond}. The review in \cite{anchored} uses a ``modernized'' notational convention for $e_{a,b}$ and $h_{a,b}$ which we follow here.} in \cite[\S2.2]{anchored}, we can approximate $X_\Omega$ by a nondegenerate star-shaped domain $X\subset \R^4$ with the following properties:
\begin{itemize}
\item
The $C^0$-distance between $\partial X_\Omega$ and $\partial X$ is at most $\epsilon$.
\item
Up to symplectic action $\epsilon^{-1}$, the simple Reeb orbits on $\partial X$ consist of an elliptic orbit $e_{a,b}$ and a positive hyperbolic $h_{a,b}$ for every pair of relatively prime nonnegative integers $a,b$, not both zero, with $\|(a,b)\|_\Omega^* < \epsilon^{-1}$. For $h_{a,b}$ we also require $a,b>0$.
\item
If $\gamma_{a,b}$ denotes either $e_{a,b}$ or $h_{a,b}$, then its symplectic action satisfies
\begin{equation}
\label{eqn:Agammaab}
\left|\mathcal{A}(\gamma_{a,b}) - \|(a,b)\|_\Omega^* \right| \le \epsilon.
\end{equation}
\item
If $\gamma_{a',b'}$ is distinct from $\gamma_{a,b}$, then the linking number
\begin{equation}
\label{eqn:ellab}
\ell(\gamma_{a,b},\gamma_{a',b'}) = \max(ab',a'b).
\end{equation}
\item
There is a trivialization $\tau$ of $\xi$ over the Reeb orbits $e_{a,b}$ and $h_{a,b}$ such that
\begin{align}
\label{eqn:ctauab}
c_\tau(\gamma_{a,b}) &= a+b,\\
\label{eqn:qtauab}
Q_\tau(\gamma_{a,b}) &= ab,\\
\label{eqn:cztaueab}
\op{CZ}_\tau(e_{a,b}^m) &= 1, \quad\quad\quad \mbox{if $m\|(a,b)\|_\Omega^* < \epsilon^{-1}$},\\
\label{eqn:cztauhab}
\op{CZ}_\tau(h_{a,b}) &= 0.
\end{align}
\end{itemize}

To an orbit set $\alpha$ in $\partial X$ with action less than $\epsilon^{-1}$, we associate a convex integral path $\Lambda$ as follows. If $\alpha$ includes orbits $e_{a,b}$ and/or $h_{a,b}$ with total multiplicity $m$, then $\Lambda$ contains an edge with corresponding vector $(mb,-ma)$. We then append all the edges in order of their slopes to obtain the path $\Lambda$.

Let $P_\Lambda$ denote the polygonal region enclosed by $\Lambda$ and the axes. By dividing the region $P_\Lambda$ into rectangles and triangles and using equations \eqref{eqn:qtauquadratic}, \eqref{eqn:ellab}, and \eqref{eqn:qtauab}, we find that
\[
Q_\tau(\alpha) = 2\op{Area}(P_\Lambda).
\]
If $(0,b(\Lambda))$ and $(a(\Lambda),0)$ denote the endpoints of the path $\Lambda$, then by equation \eqref{eqn:ctauab} we have
\[
c_\tau(\alpha) = a(\Lambda) + b(\Lambda).
\]
Let $m(\Lambda)$ denote the total multiplicity of the edges of $\Lambda$. (The multiplicity of an edge is the greatest common divisor of the components of the corresponding vector.) Then by \eqref{eqn:cztaueab} and \eqref{eqn:cztauhab}, the sum of the Conley-Zehnder index terms in the ECH index \eqref{eqn:ECHindex} is in the interval $[0,m(\Lambda)]$. Putting this together, we conclude that
\begin{equation}
\label{eqn:Iconvex}
2\op{Area}(P_\Lambda) + a(\Lambda) + b(\Lambda) \le I(\alpha) \le 2\op{Area}(P_\Lambda) + a(\Lambda) + b(\Lambda) + m(\Lambda).
\end{equation}
Now the sum of the last three terms on the right hand side is the number of lattice points on the boundary of $P_\Lambda$. So by Pick's theorem, we have
\begin{equation}
\label{eqn:ILambda}
I(\alpha) \le 2\left(\mathcal{L}(\Lambda) - 1\right).
\end{equation}
On the other hand, by \eqref{eqn:Agammaab}, we have
\begin{equation}
\label{eqn:ALambda}
\mathcal{A}(\alpha) = \ell_\Omega(\Lambda) + O(\epsilon).
\end{equation}

{\em Step 2.\/}
We now show that the left hand side of \eqref{eqn:ckaltconvex} is greater than or equal to the right hand side.

It follows from \eqref{eqn:ILambda} and \eqref{eqn:ALambda} and Proposition~\ref{prop:enhanced} that
\[
c_k^{\op{Alt}}(X) \ge \min\left\{\ell_\Omega(\Lambda) \;\big|\; \mathcal{L}(\Lambda) \ge k+1\right\} + O(\epsilon).
\]
By the $C^0$-Continuity property in Proposition~\ref{prop:ckaltproperties}, it follows that
\begin{equation}
\label{eqn:ckaltconvexlb}
c_k^{\op{Alt}}(X_\Omega) \ge \min\left\{\ell_\Omega(\Lambda) \;\big|\; \mathcal{L}(\Lambda) \ge k+1\right\}.
\end{equation}
Finally, we have
\begin{equation}
\label{eqn:roundingcorners}
\min\left\{\ell_\Omega(\Lambda) \;\big|\; \mathcal{L}(\Lambda) \ge k+1\right\} \ge \min\left\{\ell_\Omega(\Lambda) \;\big|\; \mathcal{L}(\Lambda) = k+1\right\},
\end{equation}
because given a convex integral path $\Lambda$ with $\mathcal{L}(\Lambda) > k+1$, one can ``round corners'' to reduce the number of enclosed lattice points without increasing the $\Omega$-length; compare \cite[Lem.\ 2.14]{t3}.

{\em Step 3.\/} We now review why the left hand side of \eqref{eqn:ckaltconvex} is less than or equal to the right hand side.

In \cite{cgconcaveconvex}, a ``weight expansion'' of $X_\Omega$ is defined, which is a (possibly finite or infinite) sequence of positive real numbers $(a;a_1,a_2,\ldots)$. This sequence has the property that there exists a symplectic embedding
\[
\op{int}\left(X_\Omega\right) \sqcup \coprod_i \op{int}\left(B^4(a_i)\right) \hookrightarrow B^4(a).
\]
It follows from the Disjoint Union and Monotonicity properties\footnote{Note here that passing from a domain to its interior does not change its capacities, by the Monotonicity and Conformality properties.} in Proposition~\ref{prop:ckaltproperties} that
\[
c_k^{\op{Alt}}\left(X_\Omega\right) \le \inf_{k+k_1+k_2+\cdots = l}\left(c_l^{\op{Alt}}\left(B^4(a)\right) - \sum_{i} c_{k_i}^{\op{Alt}}\left(B^4(a_i)\right)\right).
\]
It is shown\footnote{More precisely, the paper \cite{cgconcaveconvex} discusses the original ECH capacities $c_k^{\op{ECH}}$, but $c_k^{\op{ECH}}$ and $c_k^{\op{Alt}}$ are the same for a ball. Also, \cite{cgconcaveconvex} considers a more general notion of ``convex toric domain'', in which one only assumes that $\Omega$ is convex but not necessarily $\widehat{\Omega}$, together with a corresponding more general notion of ``convex integral path''. For our notion of convex toric domain, the minimum in \eqref{eqn:ckaltconvex} is the same for both notions of convex integral path, as explained in \cite[Prop.\ 5.6]{beyond}.} by a combinatorial argument in \cite[Appendix]{cgconcaveconvex} that if $(a;a_1,a_2,\ldots)$ is the weight expansion of $X$, then the right hand side of the above inequality agrees with the right hand side of \eqref{eqn:ckaltconvex}. 
\end{proof}

There is also a similar formula for the alternative ECH capacities of four-dimensional ``concave toric domains''; see \cite[Thm.\ 15]{altech}. For both convex toric domains and concave toric domains in four dimensions, the alternative ECH capacities $c_k^{\op{Alt}}$ agree with the original ECH capacities as computed in \cite{concave,cgconcaveconvex}. It was shown by McDuff \cite{mcduffellipsoid} that the ECH capacities give a sharp obstruction to symplectically embedding one four-dimensional open ellipsoid into another. More generally, Cristofaro-Gardiner \cite{cgconcaveconvex} showed that ECH capacities give a sharp obstruction to symplectically embedding an open concave toric domain into a convex toric domain in four dimensions. However ECH capacities do not always give sharp obstructions to symplectically embedding one open four-dimensional convex toric domain into another, and a more careful examination of holomorphic curves in \cite{beyond} leads to sharper obstructions in some cases. The holomorphic curves needed for the results in \cite{beyond} can also be obtained from the alternative ECH capacities.

\section{Elementary spectral invariants: definition and proofs}
\label{sec:defspec}

We now return to dynamics and explain how to define the elementary spectral invariants $c_k$ that were introduced in \S\ref{sec:spec}.

\subsection{Definition of elementary spectral invariants\/}
\label{subsec:defspec}

The following discussion parallels \S\ref{sec:defaltech}, with appropriate modifications for the contact case. We begin with a modification of Definition~\ref{def:lambdacompatible}:

\begin{definition}
\label{def:generalizedlambdacompatible}
Let $Y$ be a three-manifold, and let $\lambda$ be a contact form on $Y$. Define $\widetilde{\mathcal{J}}(Y,\lambda)$ to be the set of almost complex structures $J$ on $\R\times Y$ with the following properties:
\begin{itemize}
\item
There exists $J_+\in\mathcal{J}(Y,\lambda)$ such that $J$ agrees with $J_+$ on $[0,\infty)\times Y$.
\item
There exists $R_0\ge 0$ and $J_-\in\mathcal{J}(Y,\lambda)$ such that $J$ agrees with $J_-$ on $(-\infty,-R_0]\times Y$.
\item
$J$ is compatible with the symplectic form $d(e^s\lambda)$ on $[-R_0,0]\times Y$.
\end{itemize}
\end{definition}

We now have the following modification of Definition~\ref{def:Liouvillecurve}:

\begin{definition}
\label{def:Liouvillecurvemodified}
Let $(Y,\lambda)$ be a nondegenerate contact three-manifold, and let $J\in\widetilde{\mathcal{J}}(Y,\lambda)$. Define $\mathcal{M}^J(\R\times Y)$ to be the set of $J$-holomorphic curves
\[
u : (\Sigma,j) \longrightarrow (\R\times Y,J)
\]
such that:
\begin{itemize}
\item The domain $\Sigma$ is a punctured compact surface (possibly disconnected).
\item The map $u$ is nonconstant on each component of $\Sigma$.
\item For each puncture of $\Sigma$, there is a Reeb orbit $\gamma$ in $Y$, such that either $u$ maps a neighborhood of the puncture asymptotically to $[0,\infty)\times\gamma$ as $s\to\infty$ (this is a {\bf positive puncture\/}), or $u$ maps a neighborhood of the puncture asymptotically to $(-\infty,0]\times\gamma$ as $s\to-\infty$ (this is a {\bf negative puncture\/}).
\end{itemize}
We declare two maps in $\mathcal{M}^J(\R\times Y)$ to be equivalent if they differ by a biholomorphism of the domains.
\end{definition}

If $u\in\mathcal{M}^J(\R\times Y)$, define its {\bf positive energy\/} $\mathcal{E}_+(u)$ to be the sum, over all positive punctures of $\Sigma$, of the symplectic action of the corresponding Reeb orbit.

If $x_1,\ldots,x_k\in (-\infty,0] \times Y$, define
\[
\mathcal{M}^J(\R\times Y;x_1,\ldots,x_k) = \left\{u\in\mathcal{M}^J(\R\times Y) \;\big| \; x_1,\ldots,x_k \in u(\Sigma)\right\}.
\]

We now have the following modification of Definition~\ref{def:ckalt}:

\begin{definition}[\cite{altspec}]
\label{def:ck}
Let $Y$ be a closed three-manifold and let $\lambda$ be a nondegenerate contact form on $Y$. If $k$ is a nonnegative integer, define the {\bf elementary spectral invariant\/}
\begin{equation}
\label{eqn:defaltspec}
c_k(Y,\lambda) = \sup_{\substack{J\in\widetilde{\mathcal{J}}(Y,\lambda) \\ x_1,\ldots,x_k\in (-\infty,0]\times Y}} \inf_{u\in\mathcal{M}^J(\R\times Y;x_1,\ldots,x_k)} \mathcal{E}_+(u) \in [0,+\infty].
\end{equation}
\end{definition}

As in Remark~\ref{rem:maxmin}, if $c_k(Y,\lambda)<\infty$, then one can replace `sup' and `inf' in \eqref{eqn:defaltspec} by `max' and `min'.

We now have the following counterpart of the monotonicity in Lemma~\ref{lem:ckaltmonotonicity}, which also involves the alternative ECH capacities. To state the result, if $Y$ is a closed three-manifold, and if $f_1,f_2:Y\to\R$ are smooth functions with $f_1<f_2$, define a symplectic four-manifold
\[
M_{f_1,f_2}\eqdef \left\{(s,y)\in\R\times Y\;\big|\;f_1(y)<s<f_2(y)\right\},
\]
with the symplectic form given by the symplectization form $d(e^s\lambda)$.

\begin{lemma}
\label{lem:ckmonotonicity}
(\cite[Lem.\ 4.4]{altspec})
Let $Y$ be a closed three-manifold and let $\lambda$ be a contact form on $Y$. Suppose that $f_1,f_2: Y\to \R$ are smooth functions with $f_1<f_2$ such that the contact forms $e^{f_i}\lambda$ are nondegenerate. Then for any nonnegative integers $k$ and $l$, we have
\begin{equation}
\label{eqn:ckextendedmonotonicity}
c_k\left(Y,e^{f_1}\lambda\right) + c_l^{\op{Alt}}\left(M_{f_1,f_2}\right) \le c_{k+l}\left(Y,e^{f_2}\lambda\right).
\end{equation}
\end{lemma}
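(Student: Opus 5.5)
We will prove \eqref{eqn:ckextendedmonotonicity} directly from the max-min definitions, by a neck-stretching argument modeled on the proof of Lemma~\ref{lem:ckaltmonotonicity}.

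\emph{Reduction.} First reduce to the case where $c_{k+l}(Y,e^{f_2}\lambda)<\infty$, since otherwise there is nothing to prove. Next use the definition of $c_l^{\op{Alt}}$ for non-admissible manifolds (Definition~\ref{def:ckaltextended}). It suffices to show the following: if $(X,\omega)$ is admissible and $X\hookrightarrow M_{f_1,f_2}$ is a symplectic embedding, then
\[
c_k(Y,e^{f_1}\lambda)+c_l^{\op{Alt}}(X)\le c_{k+l}(Y,e^{f_2}\lambda).
\]
Fix $\epsilon>0$. Choose $J_1\in\widetilde{\mathcal{J}}(Y,e^{f_1}\lambda)$ and points $x_1,\ldots,x_k$ nearly realizing the sup for $c_k(Y,e^{f_1}\lambda)$. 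Choose $J_X\in\mathcal{J}(\overline{X})$ and points $y_1,\ldots,y_l\in X$ nearly realizing the sup for $c_l^{\op{Alt}}(X)$.

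\emph{Building an almost complex structure for the larger form.} Identify $\R\times Y$ for $e^{f_2}\lambda$ with $\R\times Y$ for $\lambda$, via $(s,y)\mapsto(s+f_2(y),y)$ and similarly for $f_1$. In these coordinates, $(-\infty,0]\times Y$ for $e^{f_2}\lambda$ contains a copy of $M_{f_1,f_2}$, and below it a copy of $(-\infty,0]\times Y$ for $e^{f_1}\lambda$. Now build a family $J^R\in\widetilde{\mathcal{J}}(Y,e^{f_2}\lambda)$ in three steps:
\begin{itemize}
\item insert long necks $[-R,R]\times\partial X$ around the image of $X$, on which $J^R$ is cylindrical for $\lambda_X|_{\partial X}$;
\item insert a long neck $[-R,R]\times Y$ along the graph of $f_1$, on which $J^R$ is cylindrical for $e^{f_1}\lambda$;
\item make $J^R$ agree with $J_X$ on $X$ and with $J_1$ on the bottom piece.
\end{itemize}
The needed compatibility holds because all pieces are compatible with $d(e^s\lambda)$, and by using the Liouville flow as in the Conformality proof. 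For each $R$, the definition of $c_{k+l}$ gives a curve $u_R\in\mathcal{M}^{J^R}(\R\times Y;x_1,\ldots,x_k,y_1,\ldots,y_l)$ with $\mathcal{E}_+(u_R)\le c_{k+l}(Y,e^{f_2}\lambda)$. If needed, first perturb to nondegenerate $J_X$-domains and generic data, and conclude by continuity and limits.

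\emph{Compactness and energy splitting.} Let $R\to\infty$ and apply SFT/Gromov compactness \cite{dw,taubescompactness} to obtain a holomorphic building. Its levels consist of:
\begin{itemize}
\item a top level in the region above $X$ and the lower neck;
\item a level $u_X$ in $\overline{X}$ through $y_1,\ldots,y_l$;
\item symplectization levels;
\item a bottom level $u_1$ in the $J_1$-manifold through $x_1,\ldots,x_k$.
\end{itemize}
As in Lemma~\ref{lem:ckaltmonotonicity}, the relative adjunction formula and writhe bounds from \cite{ir} control the genus, so that the limit exists. The energy is then bounded using Stokes' theorem and the positivity of $d\lambda$-energy on each piece. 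The $\omega$-energy of the part of the building in $M_{f_1,f_2}$ outside $X$ is nonnegative. The positive ends of the $\overline{X}$ level have total action equal to $\mathcal{E}(u_X)$, and the positive ends at the $f_1$ neck have total action equal to $\mathcal{E}_+(u_1)$. These two sums are separate contributions to the flux through the graph of $f_2$. This gives
\[
\mathcal{E}_+(u_1)+\mathcal{E}(u_X)\le c_{k+l}(Y,e^{f_2}\lambda),
\]
and hence
\[
c_k(Y,e^{f_1}\lambda)-\epsilon+c_l^{\op{Alt}}(X)-\epsilon\le c_{k+l}(Y,e^{f_2}\lambda).
\]

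\emph{Main obstacle.} The hardest step is the compactness with genus control in the presence of two simultaneous necks, together with the energy accounting. The curves have negative ends in the bottom piece, so the constraint points must be shown to lie on nonconstant components of the correct levels. The action comparison must also be checked to add up rather than merely be bounded by a maximum. I would handle this by stretching the two necks sequentially: first along $\partial X$, then along the graph of $f_1$. At each stage I would apply the same argument as in Lemma~\ref{lem:ckaltmonotonicity}, and track the actions using $d\lambda\ge0$ on each cylindrical level.
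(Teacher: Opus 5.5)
Your proposal follows essentially the same route as the paper, which (citing \cite{altspec}) proves the lemma by a neck-stretching argument modeled on Lemma~\ref{lem:ckaltmonotonicity}. Your $J^R$ is cylindrical for $e^{f_1}\lambda$ along the graph of $f_1$ and equals $J_X$ on $X$ while remaining $d(e^s\lambda)$-compatible, which is exactly why the paper stresses using the larger class $\widetilde{\mathcal{J}}$.

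One detail needs repair. Near $s=-\infty$, $J_1$ is cylindrical for $e^{f_1}\lambda$, not for $e^{f_2}\lambda$, so $J^R$ cannot agree with $J_1$ on the whole bottom piece and still lie in $\widetilde{\mathcal{J}}(Y,e^{f_2}\lambda)$. Instead, keep $J_1$ only on $[-S,0]\times Y$ in the $e^{f_1}\lambda$ coordinates, interpolate below that through $d(e^s\lambda)$-compatible structures to an $e^{f_2}\lambda$-cylindrical one, and also let $S\to\infty$. This only adds lower levels, which do not affect your energy count.
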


This lemma is proved in \cite{altspec} using a neck stretching argument similar to the proof of Lemma~\ref{lem:ckaltmonotonicity}. We remark that for this proof to work, it is important that we used the larger class of almost complex structures $\widetilde{\mathcal{J}}(Y,\lambda)$ in the definition of $c_k$, rather than the smaller class of almost complex structures $\mathcal{J}(Y,\lambda)$.

The special case of Lemma~\ref{lem:ckmonotonicity} where $l=0$ gives
\begin{equation}
\label{eqn:ckmonotonicity}
c_k\left(Y,e^{f_1}\lambda\right) \le c_{k}\left(Y,e^{f_2}\lambda\right).
\end{equation}
This inequality allows us to extend the definition of $c_k$ to degenerate contact forms as follows:

\begin{definition}
\label{def:ckdegen}
Let $Y$ be a closed three-manifold, let $\lambda$ be a contact form on $Y$, and let $k$ be a nonnegative integer. Define
\begin{equation}
\label{eqn:defckdegen}
c_k(Y,\lambda) = \sup\left\{c_k\left(Y,e^f\lambda\right)\right\},
\end{equation}
where the supremum is over smooth functions $f: Y \to (-\infty,0]$ such that the contact form $e^f\lambda$ is nondegenerate.
\end{definition}

Note that because of the Conformality property that we will prove shortly, in \eqref{eqn:defckdegen} we could equivalently take the infimum over smooth functions $f: Y \to [0,\infty)$ such that $e^f \lambda$ is nondegenerate.

%%%%%%%%%%%%%%%%%%%%%%%%%%%%%%%%%%%%%%%%%%%%%%%%%%%%%%%%%%%%%

\subsection{Proofs of basic properties}
\label{sec:ckbasicproofs}

\begin{proof}[Proof of Proposition~\ref{prop:ckproperties}.]
{\em Increasing.\/}
This follows immediately from the definition.

{\em Monotonicity.} This follows from the inequality \eqref{eqn:ckmonotonicity} and Definition~\ref{def:ckdegen}.

{\em Conformality.\/}
This follows the proof of the Conformality property in Proposition~\ref{prop:ckaltproperties}.

{\em $C^0$-Continuity.} This follows from the Conformality and Monotonicity properties.

{\em Spectrality.\/} This follows from the same argument as the proof of the Spectrality property of $c_k^{\op{Alt}}$ in Proposition~\ref{prop:ckaltproperties}.

{\em Disjoint Union.\/} This follows immediately from the definition of $c_k$.

{\em Width Bound.\/} By $C^0$-Continuity, we can assume without loss of generality that the contact forms $\lambda$ and $e^f\lambda$ are nondegenerate. By Lemma~\ref{lem:ckmonotonicity}, we have
\[
c_{k+1}(Y,e^{f}\lambda) \ge c_k(Y,\lambda) + c_1^{\op{Alt}}(M_f). 
\]
By the Monotonicity and Ball properties in Proposition~\ref{prop:ckaltproperties}, we have
\[
\begin{split}
c_1^{\op{Alt}}(M_f) &\ge \sup\left\{c_1^{\op{Alt}}\left(B^4(r)\right) \;\big|\; \mbox{$B^4(r)$ symplectically embeds into $M_f$}\right\}\\
&= \sup\left\{r \;\big|\; \mbox{$B^4(r)$ symplectically embeds into $M_f$}\right\}\\
&= c_{\op{Gr}}(M_f).
\end{split}
\]
The above inequalities imply the Width Bound.
\end{proof}

%%%%%%%%%%%%%%%%%%%%%%%%%%%%%%%%%%%%%%%%%%%%%%

\subsection{Elementary spectral invariants of star-shaped hypersurfaces}
\label{sec:ckmoreproofs}

%%%%%%%%%%%%%%%%%%%%%%%%%%%%%%%%%%%%%%%%%%%%%

We now use results from \S\ref{sec:comp} to prove more properties of the elementary spectral invariants $c_k$, focusing on the special case of star-shaped hypersurfaces in $\R^4$.

We start with the following basic relation between elementary spectral invariants and alternative ECH capacities:

\begin{lemma}[\cite{altspec}]
\label{lem:ckaltck}
Let $(X,\lambda)$ be a four-dimensional Liouville domain with boundary $Y$. Then for each nonnegative integer $k$, we have
\[
c_k(Y,\lambda|_Y) \le c_k^{\op{Alt}}(X,\lambda).
\]
\end{lemma}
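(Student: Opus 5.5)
We will prove the inequality by comparing the two max-min definitions. The idea is to turn any almost complex structure on the symplectization into one on the completion $\overline{X}$. Curves in $\overline{X}$ then become curves in $\R\times Y$ with only positive punctures.

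First we reduce to the nondegenerate case. If $\lambda|_Y$ is degenerate, Definition~\ref{def:ckdegen} says $c_k(Y,\lambda|_Y)$ is a supremum of $c_k(Y,e^f\lambda|_Y)$ over $f\le 0$ with $e^f\lambda|_Y$ nondegenerate. For such $f$, the region in $X$ bounded by the graph of $f$ in the collar $(-\epsilon,0]\times Y$ is a nondegenerate Liouville domain $X_f\subset X$. Here we first rescale by Conformality if $f$ is not small, or flow by the Liouville vector field; either way $X_f$ embeds in $X$, with boundary contact form $e^f\lambda|_Y$. By Monotonicity (Lemma~\ref{lem:ckaltmonotonicity} and Definition~\ref{def:ckaltextended}), $c_k^{\op{Alt}}(X_f)\le c_k^{\op{Alt}}(X)$. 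So it suffices to treat a nondegenerate Liouville domain.

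Now assume $\lambda|_Y$ is nondegenerate. Fix $J\in\widetilde{\mathcal{J}}(Y,\lambda)$, with its $R_0$ and $J_\pm$, and points $x_1,\ldots,x_k\in(-\infty,0]\times Y$. We may assume all $x_i$ lie in $(-R,0]\times Y$ for some $R\ge R_0$, and we may increase $R_0$ so that it equals $R$ (we still have $J=J_-$ below $-R_0$). Using the Liouville flow, identify $X$ with a domain whose collar is $(-R_0-\delta,0]\times Y$ with Liouville form $e^s\lambda|_Y$. By Conformality of $c_k^{\op{Alt}}$ we are free to rescale, so it is harmless to view $X$ as the union of a small interior piece $X_0$ and the collar $[-R_0-\delta,0]\times Y$. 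Concretely, take $X_0=$ the Liouville flow image of $X$ at time $-R_0-\delta$, so that $X\cong X_0\cup([-R_0-\delta,0]\times Y)$ and $\overline{X}=\overline{X_0}$.

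Next define $J'\in\mathcal{J}(\overline{X})$ as follows:
\begin{itemize}
\item on $[-R_0,\infty)\times Y$, set $J'=J$;
\item on $[-R_0-\delta,-R_0]\times Y$, set $J'=J_-$;
\item on $X_0$, set $J'$ to be any $\omega$-compatible extension.
\end{itemize}
This is a legitimate element of $\mathcal{J}(\overline{X})$, because $J$ is $d(e^s\lambda)$-compatible on $[-R_0,0]$ and $\lambda$-compatible on the other pieces. By definition of $c_k^{\op{Alt}}(X)$, for every $\eta>0$ there is a $J'$-curve $u$ in $\overline{X}$ through $x_1,\ldots,x_k$ with $\mathcal{E}(u)\le c_k^{\op{Alt}}(X)+\eta$. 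In fact equality is attainable by Remark~\ref{rem:maxmin}.

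The main step is to convert $u$ into an element of $\mathcal{M}^J(\R\times Y;x_1,\ldots,x_k)$ with $\mathcal{E}_+(u)\le\mathcal{E}(u)$. We do this by a neck stretching argument. Replace the collar piece $[-R_0-\delta,-R_0]\times Y$, where $J'=J_-$ is cylindrical, by $[-R_0-N,-R_0]\times Y$, which gives a family $J'_N$. This does not change $c_k^{\op{Alt}}$: the stretched domains are symplectomorphic to $X$ up to conformal scaling fixed near the top, or alternatively we compare them via Monotonicity. So we get $J'_N$-curves through the $x_i$ with energy at most $c_k^{\op{Alt}}(X)+\eta$. As $N\to\infty$, SFT/Gromov compactness as in the proof of Lemma~\ref{lem:ckaltmonotonicity} produces a limiting building. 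Its top level lives in $(\R\times Y,J)$ and passes through all the $x_i$, since they lie above the neck. The top level has positive ends of total action at most $c_k^{\op{Alt}}(X)+\eta$, and it may have negative ends. Its nonconstant components give a curve in $\mathcal{M}^J(\R\times Y;x_1,\ldots,x_k)$. If $\mathcal{E}(u)$ does not already realize the actual min, the action spectrum of a nondegenerate form is discrete, so $\eta$ can be removed.

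The obstacle is exactly as in Lemma~\ref{lem:ckaltmonotonicity}: the curves in $\mathcal{M}^J$ are allowed arbitrary genus, so we need genus bounds to apply compactness. We would get these from the relative adjunction formula and the writhe bounds of \cite{ir}, as cited there.

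Taking the infimum over $u$ and then the supremum over $J$ and the $x_i$ gives $c_k(Y,\lambda|_Y)\le c_k^{\op{Alt}}(X,\lambda)$.
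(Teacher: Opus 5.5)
Your proposal is correct and is essentially the argument the paper defers to (\cite[\S4.2]{altspec}): extend $J$ over $\overline{X}$ via the Liouville collar, stretch the neck along a level $\{-R_0\}\times Y$ below the point constraints, and take the top level of the limiting building in $(\R\times Y,J)$, with genus control handled as in Lemma~\ref{lem:ckaltmonotonicity}. One small simplification: the backward Liouville flow embeds all of $(-\infty,0]\times Y$ into $X$, so each stretched structure $J'_N$ already lies in $\mathcal{J}(\overline{X})$ for the same $X$, and no appeal to Conformality or Monotonicity is needed to compare capacities during the stretching.
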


\begin{proof}
This is part of \cite[Thm.\ 1.14]{altspec}, and it is proved in \cite[\S4.2]{altspec} by a neck stretching argument similar to the proof of Lemma~\ref{lem:ckaltmonotonicity}.
\end{proof}

We now have the following variant of Proposition~\ref{prop:enhanced}:

\begin{lemma}
\label{lem:enhancedvariant}
Let $Y\subset\R^4$ be a star-shaped hypersurface, and let $k$ be a nonnegative integer. Then:
\begin{description}
\item{(a)}
$c_k(Y) < \infty$.
\item{(b)}
If the contact form on $Y$ is nondegenerate, then there exist orbit sets $\alpha_+$ and $\alpha_-$ in $Y$ such that
\begin{gather}
\label{eqn:ev1}
c_k(Y)  =\mathcal{A}(\alpha_+),\\
\label{eqn:ev2}
I(\alpha_+) - I(\alpha_-) \ge 2k.
\end{gather}
\end{description}
\end{lemma}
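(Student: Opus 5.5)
Part (a) follows from Lemma~\ref{lem:ckaltck} once we know the alternative ECH capacities of star-shaped domains are finite. Let $X\subset\R^4$ be the compact star-shaped domain bounded by $Y$. Choose $a$ large enough that $X\subset B^4(a)$. By Monotonicity and the Ball property in Proposition~\ref{prop:ckaltproperties}, $c_k^{\op{Alt}}(X)\le c_k^{\op{Alt}}(B^4(a)) = da<\infty$. If $\lambda_0|_Y$ is nondegenerate, Lemma~\ref{lem:ckaltck} then gives $c_k(Y)\le c_k^{\op{Alt}}(X)<\infty$. If $Y$ is degenerate, we use Definition~\ref{def:ckdegen} together with Monotonicity: every nondegenerate $e^f\lambda$ with $f\le 0$ is the contact form on a star-shaped hypersurface $Y_f$ lying inside $X$, obtained by shrinking along the radial direction. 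Hence $c_k(Y,e^f\lambda)\le c_k^{\op{Alt}}(X_f)\le c_k^{\op{Alt}}(B^4(a))$, and the supremum over $f$ is bounded by $da$.

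For part (b) I would imitate the proof of Proposition~\ref{prop:enhanced}. Since $c_k(Y)<\infty$ and the action spectrum is discrete, the sup and inf in \eqref{eqn:defaltspec} become max and min. Replacing multiply covered components by their underlying simple curves lowers $\mathcal{E}_+$ and preserves the point constraints, so
\[
c_k(Y)=\max_{J,x_i}\min\{\mathcal{A}(\alpha_+)\mid \mathcal{M}^J_*(\R\times Y,\alpha_+,\alpha_-;x_1,\ldots,x_k)\neq\emptyset\},
\]
where $\alpha_\pm$ are the orbit sets at the positive and negative punctures. By Gromov compactness, the maximum can be taken over generic $J\in\widetilde{\mathcal{J}}(Y,\lambda)$ and generic points $x_1,\ldots,x_k\in(-\infty,0]\times Y$. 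For such $J$, the simple curves are cut out transversely, so each moduli space near a curve $u$ has dimension $\op{ind}(u)$. Because $J$ is not $\R$-invariant, we do not quotient by translation.

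The step I expect to require the most care is the index inequality for curves in $\R\times Y$ with both positive and negative ends:
\[
\op{ind}(u)\le I(\alpha_+,\alpha_-,[u]),\qquad I(Z)=c_\tau(Z)+Q_\tau(Z)+\textstyle\sum \op{CZ}_\tau(\alpha_+)-\sum \op{CZ}_\tau(\alpha_-),
\]
which is the symplectic cobordism version of \eqref{eqn:indexinequality} from \cite{ir,bn}. Since $Y\cong S^3$, we have $H_2(Y)=0$, so the relative class is unique. The index is additive under gluing, so
\[
I(\alpha_+,\alpha_-)=I(\alpha_+)-I(\alpha_-),
\]
where $I(\alpha)$ is the ECH index in the filling $X$ from \eqref{eqn:Isimplifiedmore}. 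Concretely, glue $u$'s relative class to the unique class in $X$ bounding $\alpha_-$; this class is well defined because $H_2(X)=0$. Then $Q_\tau$ and $c_\tau$ add, and the Conley--Zehnder terms telescope.

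With this in hand, the dimension count is the same as in Proposition~\ref{prop:enhanced}. A curve through $k$ generic points forces $\op{ind}(u)\ge 2k$. Take $u$ realizing the minimum for the maximizing generic $(J,x_i)$, and let $\alpha_\pm$ be its asymptotic orbit sets. Then $c_k(Y)=\mathcal{A}(\alpha_+)$ and $I(\alpha_+)-I(\alpha_-)\ge\op{ind}(u)\ge 2k$.
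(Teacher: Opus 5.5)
Your proposal is correct and follows the paper's proof. Part (a) is the same bound $c_k(Y)\le c_k^{\op{Alt}}(X)\le c_k^{\op{Alt}}(B^4(r))<\infty$ via Lemma~\ref{lem:ckaltck} and the Monotonicity and Ball properties. Part (b) reruns the argument of Proposition~\ref{prop:enhanced} using the ECH index inequality for curves in $\R\times Y$. Your extra details just make explicit what the paper leaves to that citation: the separate degenerate case in (a), which is harmless but not needed since Lemma~\ref{lem:ckaltck} has no nondegeneracy hypothesis, and the gluing argument using $H_2(X)=0$ that identifies the cobordism ECH index with $I(\alpha_+)-I(\alpha_-)$.
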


\begin{proof}
(a) Let $X$ be the star-shaped domain bounded by $Y$. Since $X$ is compact, if $r$ is sufficiently large then $X\subset B^4(r)$. Then by Lemma~\ref{lem:ckaltck} and the Monotonicity and Ball properties in Proposition~\ref{prop:ckaltproperties}, it follows that
\[
c_k(Y) \le c_k^{\op{Alt}}(X) \le c_k^{\op{Alt}}(B^4(r)) < \infty.
\]

(b) 
This parallels the proof of Proposition~\ref{prop:enhanced}. Here one needs to know that if $J\in\widetilde{\mathcal{J}}(Y,\lambda)$ is generic and if $u\in\mathcal{M}^J(\R\times Y)$ has no multiply covered components, and if $\alpha_+$ and $\alpha_-$ are the orbit sets resulting from the positive and negative punctures of $u$, then $\mathcal{M}^J(\R\times Y)$ is a smooth manifold near $u$ of dimension $\le I(\alpha_+) - I(\alpha_-)$. This fact follows from the ECH index inequality in \cite{pfh2,ir,bn}.
\end{proof}

Recall that the alternative ECH capacities of a convex toric domain were computed in Theorem~\ref{thm:ckaltconvex}. We now show that the elementary spectral invariants of the boundary of a convex toric domain are given by the same formula:

\begin{theorem}
\label{thm:ckconvex}
Let $X_\Omega\subset\R^4$ be a convex toric domain. Then
\[
c_k(\partial X_\Omega) = c_k^{\op{Alt}}(X_\Omega).
\]
\end{theorem}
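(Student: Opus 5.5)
The upper bound $c_k(\partial X_\Omega)\le c_k^{\op{Alt}}(X_\Omega)$ is immediate from Lemma~\ref{lem:ckaltck}, applied to the Liouville domain $X_\Omega$ with the restriction of the standard Liouville form. When $X_\Omega$ is degenerate, we first approximate from outside by nondegenerate star-shaped domains. We then pass to the limit using the Monotonicity and $C^0$-Continuity properties of both $c_k$ and $c_k^{\op{Alt}}$.

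For the lower bound we follow Step~1 and Step~2 of the proof of Theorem~\ref{thm:ckaltconvex}, with Lemma~\ref{lem:enhancedvariant}(b) in place of Proposition~\ref{prop:enhanced}. Fix $k$ and a small $\epsilon>0$, and take the nondegenerate perturbation $X$ of $X_\Omega$ described there. By Lemma~\ref{lem:enhancedvariant}(b) there are orbit sets $\alpha_\pm$ in $\partial X$ with $c_k(\partial X)=\mathcal{A}(\alpha_+)$ and $I(\alpha_+)-I(\alpha_-)\ge 2k$. By (a), $c_k(\partial X)$ is bounded independently of $\epsilon$, by comparison with a large ball. So for $\epsilon$ small, $\alpha_+$, and hence $\alpha_-$ (whose action is smaller, since positive energy dominates), have action below $\epsilon^{-1}$. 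Therefore both are described by convex integral paths.

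The key extra input is that $I(\alpha_-)\ge 0$. This follows from the lower bound in \eqref{eqn:Iconvex}, since $2\op{Area}(P_\Lambda)+a(\Lambda)+b(\Lambda)\ge 0$. Consequently $I(\alpha_+)\ge 2k$, and by \eqref{eqn:ILambda} the path $\Lambda_+$ of $\alpha_+$ satisfies $\mathcal{L}(\Lambda_+)\ge k+1$. By \eqref{eqn:ALambda} this gives
\[
c_k(\partial X)\ge \min\{\ell_\Omega(\Lambda)\mid \mathcal{L}(\Lambda)\ge k+1\}+O(\epsilon).
\]
Letting $\epsilon\to 0$ and using $C^0$-Continuity of $c_k$ (Proposition~\ref{prop:ckproperties}), together with \eqref{eqn:roundingcorners} and Theorem~\ref{thm:ckaltconvex}, we obtain $c_k(\partial X_\Omega)\ge c_k^{\op{Alt}}(X_\Omega)$.

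The main obstacle is controlling the negative end, namely the step asserting $I(\alpha_-)\ge 0$. This is where the possible negative punctures in the symplectization could cost index. The argument needs the nonnegativity of the ECH index for \emph{all} orbit sets of low action in this perturbation, and it needs the action bound guaranteeing that $\alpha_-$ lies in the controlled range. If the lower bound in \eqref{eqn:Iconvex} needs more care, because Conley--Zehnder terms of hyperbolic and elliptic orbits are nonnegative only in the controlled range, the first thing to check is that every orbit appearing has action below $\epsilon^{-1}$. That in turn follows from $\mathcal{A}(\alpha_-)\le\mathcal{A}(\alpha_+)\le c_k^{\op{Alt}}(B^4(r))$.
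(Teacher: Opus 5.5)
Your proposal is the paper's proof: the upper bound comes from Lemma~\ref{lem:ckaltck}, and the lower bound takes $\alpha_\pm$ from Lemma~\ref{lem:enhancedvariant}(b) for the perturbation $X$, uses the left side of \eqref{eqn:Iconvex} to get $I(\alpha_-)\ge 0$, hence $I(\alpha_+)\ge 2k$ and $\mathcal{L}(\Lambda_+)\ge k+1$, and finishes with \eqref{eqn:ALambda}, $\epsilon\to 0$, and \eqref{eqn:roundingcorners}. One correction concerns your extra check that $\alpha_-$ has action below $\epsilon^{-1}$, a point the paper leaves implicit. A $J\in\widetilde{\mathcal{J}}(Y,\lambda)$ is only $d(e^s\lambda)$-compatible on $[-R_0,0]\times Y$, so Stokes' theorem gives only $\mathcal{A}(\alpha_-)\le e^{R_0}\mathcal{A}(\alpha_+)$, and ``positive energy dominates'' can fail for the maximizing $J$. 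Since you only need $c_k(\partial X)\ge\mathcal{A}(\alpha_+)$, run the dimension count with a generic $\R$-invariant $J$ (the case $R_0=0$), for which $\mathcal{A}(\alpha_-)\le\mathcal{A}(\alpha_+)\le c_k^{\op{Alt}}(B^4(r))$ does hold.
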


\begin{proof}
We know from Lemma~\ref{lem:ckaltck} that
\[
c_k(\partial X_\Omega) \le c_k^{\op{Alt}}(X_\Omega).
\]

To prove the reverse inequality, fix a nonnegative integer $k$, let $\epsilon>0$ be small with respect to $k$, and let $X$ be a nondegenerate perturbation of $X_\Omega$ as in the proof of Theorem~\ref{thm:ckaltconvex} in \S\ref{sec:ckaltconvex}. Write $Y=\partial X$. Let $\alpha_+$ and $\alpha_-$ be orbit sets provided by Lemma~\ref{lem:enhancedvariant}(b), and let $\Lambda_+$ and $\Lambda_-$ be the convex integral paths corresponding to them as in \S\ref{sec:ckaltconvex}. By the left inequality in \eqref{eqn:Iconvex} applied to $\alpha_-$, we have
\[
I(\alpha_-) \ge 2\op{Area}(P_{\Lambda_-}) + a(\Lambda_-) + b(\Lambda_-) \ge 0.
\]
So by \eqref{eqn:ev2} we have $I(\alpha_+)\ge 2k$.

It then follows from the inequality \eqref{eqn:ILambda} applied to $\alpha_+$ that
\[
\mathcal{L}(\Lambda_+) \ge k+1.
\]
On the other hand, by equation \eqref{eqn:ALambda} applied to $\alpha_+$, we have
\[
\mathcal{A}(\alpha_+) = \ell_\Omega(\Lambda_+) + O(\epsilon).
\]
Combining this with \eqref{eqn:ev1} and taking the limit as $\epsilon\to 0$, we conclude that $c_k(\partial X_\Omega)$ satisfies the lower bound in \eqref{eqn:ckaltconvexlb}. Thus by the inequality \eqref{eqn:roundingcorners}, the spectral invariant $c_k(\partial X_\Omega)$ is greater than or equal to the formula for $c_k^{\op{Alt}}(X_\Omega)$ in Theorem~\ref{thm:ckaltconvex}.
\end{proof}

\begin{proof}[Proof of Proposition~\ref{prop:ckellipsoid}]
This follows from Theorem~\ref{thm:ckconvex} and Corollary~\ref{cor:ckaltellipsoid}.
\end{proof}

\subsection{The Weyl law for star-shaped hypersurfaces}
\label{sec:ssweyl}

We now prove the Weyl law in Theorem~\ref{thm:ckweyl} for the special case of star-shaped hypersurfaces:

\begin{theorem}
\label{thm:ssweyl}
Let $Y\subset\R^4$ be a star-shaped hypersurface, and let $X\subset \R^4$ be the domain that it bounds. Then
\[
\lim_{k\to \infty}\frac{c_k(Y)^2}{k} = 4\op{vol}(X).
\]
\end{theorem}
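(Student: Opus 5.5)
The plan is to prove the upper and lower bounds separately. Note that $4\op{vol}(X) = 2\op{vol}(Y,\lambda)$, since $\op{vol}(Y,\lambda)=2\op{vol}(X)$ by Stokes' theorem. So the statement is the Weyl law of Theorem~\ref{thm:ckweyl} for $Y$.

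\emph{Upper bound.} This is immediate from what is already established. By Lemma~\ref{lem:ckaltck}, $c_k(Y)\le c_k^{\op{Alt}}(X)$. The domain $X$ is compact with smooth boundary, so Theorem~\ref{thm:ckaltweyl} applies and gives
\[
\limsup_{k\to\infty}\frac{c_k(Y)^2}{k}\le \lim_{k\to\infty}\frac{c_k^{\op{Alt}}(X)^2}{k}=4\op{vol}(X).
\]

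\emph{Lower bound.} Here I would mimic the ball-packing argument of Proposition~\ref{prop:ckaltlowerweyl}, working inside the symplectization. The star-shaped structure identifies $\R^4\setminus\{0\}$ symplectically with $(\R\times Y, d(e^s\lambda))$, where $\{s\le 0\}$ corresponds to $X\setminus\{0\}$. Fix $\epsilon>0$ and a large constant $R>0$, and apply Lemma~\ref{lem:ckmonotonicity} with $f_1=-R$ and $f_2=0$, after a small perturbation to make the relevant contact forms nondegenerate and using $C^0$-Continuity. This gives
\[
c_k\left(Y,e^{-R}\lambda\right)+c_l^{\op{Alt}}\left(M_{-R,0}\right)\le c_{k+l}(Y,\lambda).
\]
Taking $k=0$, so that $c_0=0$, we get $c_l(Y)\ge c_l^{\op{Alt}}(M_{-R,0})$. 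The region $M_{-R,0}$ is $X$ minus the shrunken copy $e^{-R/2}X$, with the scaling convention fixed by $\lambda_0$. Its volume is $(1-e^{-2R})\op{vol}(X)$. Proposition~\ref{prop:ckaltlowerweyl} applies to any compact symplectic four-manifold; applying it to a compact subdomain of $M_{-R,0}$ with volume within $\epsilon$ of the full volume, together with Monotonicity, gives
\[
\liminf_{l\to\infty}\frac{c_l(Y)^2}{l}\ge 4\left((1-e^{-2R})\op{vol}(X)-\epsilon\right).
\]
Letting $R\to\infty$ and $\epsilon\to 0$ yields the lower bound $4\op{vol}(X)$.

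\emph{Main obstacle.} The delicate point is the application of Lemma~\ref{lem:ckmonotonicity}, which is stated for nondegenerate $e^{f_i}\lambda$ with $f_1<f_2$ smooth. I would first perturb $f_1=-R$ and $f_2=0$ by $C^0$-small functions so that both forms are nondegenerate. The resulting region $M_{f_1,f_2}$ still contains a compact subdomain of almost full volume, so the alternative capacity bound survives by Monotonicity. Then $C^0$-Continuity of $c_l$ in Proposition~\ref{prop:ckproperties} lets me pass back to $\lambda$ itself. Since $c_k^{\op{Alt}}$ of a general open manifold is defined through Definition~\ref{def:ckaltextended}, I must also check that Lemma~\ref{lem:ckmonotonicity} is being used with that extended definition. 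I expect this to be routine but to need the most care.
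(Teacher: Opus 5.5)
Your proof is correct. The lower bound is the paper's argument (Lemma~\ref{lem:weyllb}): Lemma~\ref{lem:ckmonotonicity} with $k=0$, $f_1=-R$, $f_2=0$, then Proposition~\ref{prop:ckaltlowerweyl} applied to $M_{-R,0}$. The paper handles nondegeneracy in a footnote via $C^0$-Continuity, and your perturbation argument is a valid way to fill that in.

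The upper bound is where you genuinely differ. The paper applies Lemma~\ref{lem:ckmonotonicity} with $Y$ as the lower boundary and $\partial B^4(r)$ as the upper one, the region between them being $B^4(r)\setminus X$. This gives $c_k(Y)+c_l^{\op{Alt}}(B^4(r)\setminus X)\le c_{k+l}(\partial B^4(r)) = c_{k+l}^{\op{Alt}}(B^4(r))$, and the paper then reruns the computation \eqref{eqn:klcalculation} directly for $c_k(Y)$. You instead apply Lemma~\ref{lem:ckaltck} to the filling $X$ and quote Theorem~\ref{thm:ckaltweyl}, whose proof contains that same packing computation on the four-dimensional side.

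Your route is shorter and more modular. It shows that the boundary of any Liouville domain whose alternative ECH capacities obey the Weyl law satisfies the upper half of the contact Weyl law. It also sidesteps the implicit perturbation needed to apply Lemma~\ref{lem:ckmonotonicity} to the degenerate $\partial B^4(r)$, since Lemma~\ref{lem:ckaltck} has no nondegeneracy hypothesis.

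The paper's route keeps the argument on the contact side and uses the same Lemma~\ref{lem:ckmonotonicity} in both halves. Its four-dimensional input is only the ball together with the lower bound for $B^4(r)\setminus X$. Of the equality from Theorem~\ref{thm:ckconvex}, only the easy inequality $c_{k+l}(\partial B^4(r))\le c^{\op{Alt}}_{k+l}(B^4(r))$ is used, which is again Lemma~\ref{lem:ckaltck}. For star-shaped $Y$ both sets of inputs are available, so the two arguments are equally good.
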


Note that by equation \eqref{eqn:symplecticvolume} and Stokes theorem, $\op{vol}(X)$ is half the contact volume of $Y$.

To prove Theorem~\ref{thm:ssweyl}, we first show that the ``lower bound half'' of the Weyl law holds for any contact three-manifold. (In general the upper bound half is the hard part.)

\begin{lemma}
\label{lem:weyllb}
(part of \cite[Thm.\ 1.14]{altspec})
Let $Y$ be a closed three-manifold and let $\lambda$ be a contact form on $Y$. Then
\[
\liminf_{k\to \infty}\frac{c_k(Y,\lambda)^2}{k} \ge 2\op{vol}(Y,\lambda).
\]
\end{lemma}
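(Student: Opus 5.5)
The plan is to use the extended monotonicity of Lemma~\ref{lem:ckmonotonicity} with $k=0$, which turns a lower bound for the alternative ECH capacities of a region in the symplectization into a lower bound for $c_l$. Proposition~\ref{prop:ckaltlowerweyl} already gives the lower Weyl bound for $c_l^{\op{Alt}}$ of any compact symplectic four-manifold, so the work is bookkeeping with volumes and an $\epsilon\to 0$ limit.

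First reduce to the nondegenerate case. For a smooth $f\le 0$ with $e^f\lambda$ nondegenerate, Definition~\ref{def:ckdegen} gives $c_k(Y,\lambda)\ge c_k(Y,e^f\lambda)$. Since $\op{vol}(Y,e^f\lambda)=\int_Y e^{2f}\lambda\wedge d\lambda$, we can take $f$ $C^0$-close to $0$ (for example $f=-\epsilon+g$ with $g$ small and generic) so that this volume is within any prescribed $\delta$ of $\op{vol}(Y,\lambda)$. So it suffices to prove the inequality for nondegenerate $\lambda$.

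Now fix nondegenerate $\lambda$ and $R>0$. Choose a constant $f_1=-R$, perturbed slightly if needed so that $e^{f_1}\lambda$ is nondegenerate, and take $f_2=0$. Lemma~\ref{lem:ckmonotonicity} with $k=0$ and $c_0=0$ gives
\[
c_l(Y,\lambda)\ge c_l^{\op{Alt}}(M_{f_1,0}).
\]
The region $M_{-R,0}$ contains compact subdomains, such as $[-R+\eta,-\eta]\times Y$, whose volumes approach its full volume, and Proposition~\ref{prop:ckaltlowerweyl} applies to these compact pieces. Since $c^{\op{Alt}}$ is monotone (Definition~\ref{def:ckaltextended}), we get
\[
\liminf_{l\to\infty} \frac{c_l(Y,\lambda)^2}{l}\ge 4\op{vol}(M_{-R,0}).
\]
The symplectic volume is
\[
\op{vol}(M_{-R,0}) = \frac{1}{2}\int_{[-R,0]\times Y} d(e^s\lambda)^2 = \int_{-R}^0 e^{2s}\,ds\cdot \op{vol}(Y,\lambda) = \frac{1-e^{-2R}}{2}\op{vol}(Y,\lambda),
\]
which uses $d(e^s\lambda)^2=2e^{2s}\,ds\wedge\lambda\wedge d\lambda$. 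Hence the $\liminf$ is at least $2(1-e^{-2R})\op{vol}(Y,\lambda)$. Letting $R\to\infty$, and then the perturbation parameters go to zero, finishes the proof.

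The main obstacle is technical bookkeeping rather than a new idea. Lemma~\ref{lem:ckmonotonicity} needs $e^{f_1}\lambda$ nondegenerate and $f_1<f_2$ strictly; both are arranged by small perturbations whose effect on the volume is arbitrarily small. Proposition~\ref{prop:ckaltlowerweyl} is applied to compact subsets of the open region $M_{f_1,0}$. Taking constant $f_1$ keeps the volume computation explicit; a slightly perturbed constant changes the volume only by $O(\epsilon)$.
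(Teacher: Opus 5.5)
Your proposal is correct and is essentially the paper's proof: apply Lemma~\ref{lem:ckmonotonicity} with $k=0$, $f_1=-R$, $f_2=0$ to get $c_l(Y,\lambda)\ge c_l^{\op{Alt}}(M_{-R,0})$. Then use Proposition~\ref{prop:ckaltlowerweyl} and the computation $4\op{vol}(M_{-R,0})=2(1-e^{-2R})\op{vol}(Y,\lambda)$, and let $R\to\infty$.

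The remaining differences are bookkeeping. You reduce to the nondegenerate case directly via Definition~\ref{def:ckdegen} rather than the paper's appeal to $C^0$-continuity. You also apply Proposition~\ref{prop:ckaltlowerweyl} to compact subsets of the open region $M_{-R,0}$, which is more careful than the paper. Finally, no perturbation of $f_1=-R$ is needed, since $e^{-R}\lambda$ is nondegenerate whenever $\lambda$ is.
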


\begin{proof}
Let $R_0>0$. By Lemma~\ref{lem:ckmonotonicity} with $k=0$, $f_1=-R_0$, and $f_2=0$, and switching the letters $k$ and $l$, we have\footnote{Strictly speaking, this application of Lemma~\ref{lem:ckmonotonicity} assumes nondegeneracy of $\lambda$, but Lemma~\ref{lem:ckmonotonicity} extends to the degenerate case by the $C^0$-Continuity property in Proposition~\ref{prop:ckproperties}.}
\[
c_k(Y,\lambda) \ge c_k^{\op{Alt}}(M_{-R_0,0}).
\]
By Proposition~\ref{prop:ckaltlowerweyl} and Stokes theorem, we have
\[
\begin{split}
\liminf_{k\to\infty}\frac{c_k^{\op{Alt}}(M_{-R_0,0})^2}{k} &\ge 4\op{vol}(M_{-R_0,0}) \\
&= 2\left(1-e^{-2R_0}\right)\op{vol}(Y,\lambda).
\end{split}
\]
Since $R_0$ can be arbitrarily large, the lemma follows.
\end{proof}

\begin{proof}[Proof of Theorem~\ref{thm:ssweyl}]
By Lemma~\ref{lem:weyllb}, we just need to show that
\begin{equation}
\label{eqn:lastinequalitytoshow}
c_k(Y) \le 2\sqrt{k\op{vol}(X)} + o(k^{1/2}).
\end{equation}
Choose $r>0$ sufficiently large that $X\subset\op{int}(B^4(r))$. By Lemma~\ref{lem:ckmonotonicity} and Theorem~\ref{thm:ckconvex}, we have
\[
\begin{split}
c_k(Y) + c_l^{\op{Alt}}\left(B^4(r)\setminus X\right) &\le c_{k+l}(\partial B^4(r))\\
&= c_{k+l}^{\op{Alt}}(B^4(r)).
\end{split}
\]
The inequality \eqref{eqn:lastinequalitytoshow} now follows by the calculation in \eqref{eqn:klcalculation}.
\end{proof}

\end{document}